\documentclass{article}
\usepackage{geometry}\geometry{a4paper,tmargin=35mm,bmargin=25mm,lmargin=30mm,rmargin=30mm, twoside=true}
\usepackage{graphics}
\usepackage{epstopdf}
\usepackage{color}
\usepackage{amsmath,amsfonts}
\hyphenation{Technno-logy}
\hyphenation{beha-vior}
\hyphenation{E-di-to-ra}
\hyphenation{Ti-mo-shen-ko}
\hyphenation{dam-ping}
\hyphenation{Exis-tence}
 \usepackage[pagewise]{lineno}
 \usepackage{tikz}
\usepackage{graphicx}
\usepackage{tikz-3dplot}
\usepackage{setspace}
\usepackage{fancyhdr}
\pagestyle{fancy}
\fancyhf{}
\usepackage[ansinew]{inputenc}
\usepackage{comment}
\usepackage{amssymb}
\usepackage{latexsym}
\usepackage{graphics}
\usepackage{epstopdf}
\usepackage{xcolor}
\usepackage{fancybox}
\usepackage{amsmath,amsfonts}
\usepackage{hyperref}
\begin{document}

\setlength{\parskip}{0.3\baselineskip}

\newtheorem{theorem}{Theorem}
\newtheorem{corollary}[theorem]{Corollary}
\newtheorem{lemma}[theorem]{Lemma}
\newtheorem{proposition}[theorem]{Proposition}
\newtheorem{definition}[theorem]{Definition}
\newtheorem{remark}[theorem]{Remark}
\renewcommand{\thefootnote}{\alph{footnote}}
\newenvironment{proof}{\smallskip \noindent{\bf Proof}: }{\hfill $\Box$\hspace{1in} \medskip \\ }

\newcommand{\sii}{\Leftrightarrow}
\newcommand{\imer}{\hookrightarrow}
\newcommand{\imerc}{\stackrel{c}{\hookrightarrow}}
\newcommand{\Con}{\longrightarrow}
\newcommand{\con}{\rightarrow}
\newcommand{\conf}{\rightharpoonup}
\newcommand{\confe}{\stackrel{*}{\rightharpoonup}}
\newcommand{\pbrack}[1]{\left( {#1} \right)}
\newcommand{\sbrack}[1]{\left[ {#1} \right]}
\newcommand{\key}[1]{\left\{ {#1} \right\}}
\newcommand{\dual}[2]{\langle{#1},{#2}\rangle}

\newcommand{\R}{{\mathbb R}}
\newcommand{\N}{{\mathbb N}}
\newcommand{\cred}[1]{\textcolor{red}{#1}}

\title{\bf Stability and Regularity for Double Wall Carbon Nanotubes Modeled as Timoshenko Beams with Thermoelastic Effects and Intermediate Damping}
\author{Fredy M.  Sobrado  Su\'arez$^*$\\
{\small Department of  Mathematics, The  Federal University of Technological of Paran\'a, Brazil}\\Lesly D.  Barbosa  Sobrado,   Gabriel L. Lacerda de Araujo\quad \\
{\small Institute of Mathematics,  Federal University   of  Rio de Janeiro, Brazil}\\
and \quad
Filomena B.  Rodrigues Mendes\\
{\small Department of Engenhary Electric, The  Federal University of Technological of Paran\'a, Brazil}
}
\date{}
\maketitle

\let\thefootnote\relax\footnote{$^*$ corresponding author:{\it e-mail:}   {\rm fredy@utfpr.edu.br} (Fredy M. Sobrado  Su\'arez)}.

\begin{abstract}
This research studies two systems composed by the Timoshenko beam model for double wall carbon nanotubes, coupled with the heat equation governed by Fourier's law. For the first system, the coupling is given by the speed the rotation of the vertical filament in the beam $\beta\psi_t$ from the first beam of Tymoshenko and the Laplacian of temperature $\delta\theta_{xx}$, where we also consider the damping terms fractionals $\gamma_1(-\partial_{xx})^{\tau_1}\phi_t$, $\gamma_2(-\partial_{xx})^{\tau_2} y_t$ and $\gamma_3(-\partial_{xx})^{\tau_3} z_t$, where $(\tau_1, \tau_2, \tau_3) \in [0,1]^3$. For this first system we proved that the semigroup $S_1(t)$ associated to system decays exponentially for all $(\tau_1 , \tau_2 , \tau_3 ) \in [0,1]^3$. The second system also has three fractional damping  $\gamma_1(-\partial_{xx})^{\beta_1}\phi_t$, $\gamma_2(-\partial_{xx})^{\beta_2} y_t$ and  $\gamma_3(-\partial_{xx})^{\beta_3} z_t$, with $(\beta_1, \beta_2, \beta_3) \in [0,1]^3$. Furthermore, the couplings between the heat equation and the Timoshenko beams of the double wall carbon nanotubes for the second system is given by the Laplacian of the rotation speed of the vertical filament in the beam $\beta\psi_{xxt}$ of the first beam of Timoshenko and the Lapacian of the temperature $\delta\theta_{xx}$. For the second system, we prove the exponential decay of the associated semigroup $S_2(t)$ for  $(\beta_1, \beta_2, \beta_3) \in [0,1]^3$ and also show  that this semigroup admits Gevrey classes $s>(\phi+1)/(2\phi)$ for $\phi=\min\{\beta_1,\beta_2,\beta_3\}, \forall (\beta_1,\beta_2,\beta_3)\in (0,1)^3$,  and we finish our investigation proving that $S_2(t)$ is analytic when the parameters $(\beta_1, \beta_2, \beta_3) \in [1/2,1]^3$. One of the motivations for this research was the work recently published in 2023; Ramos et al. \cite{Ramos2023CNTs}, whose partial results are part of our results obtained for the first system for $(\tau_1, \tau_2, \tau_3) = (0, 0, 0)$.

\end{abstract}

\bigskip
{\sc keyword:} Asymptotic Behavior,  Stability, Regularity,   Analyticity,  DWCNTs-Fourier System, Gevrey Class.

\setcounter{equation}{0}

\section{Introduction}

The discovery  of structures called carbon nanotubes (CNTs) occurred in 1987 and later officially disclosed to the scientific community in 1991 \cite{SL1991} as the multi wall carbon nanotubes (MWCNTs); they were discovered experimentally in the search for a molecular structure called Fullerene. Fullerene is a closed carbon structure with a spherical format (geodesic dome) formed by 12 pentagons and 20 hexagons, whose formula is $C_{60}$. Carbon nanotubes are cylindrical macromolecules composed of carbon atoms in a periodic hexagonal array with $sp^2$ hybridization, similar to graphite \cite{HDTR08}. They are made like rolled sheets of graphene and can be as thick as a single carbon atom. They receive this name due to their tubular morphology in nanometric dimensions ($1nm=10^{-9}m.$). According to Shen and Brozena \cite{SBW2011}, CNTs are classified in three ways: single wall carbon nanotubes (SWCNTs), double wall carbon nanotubes (DWCNTs) and  (MWCNTs), where the concentric cylinders interact with each other through the Van der Walls force, the authors also point out that DWCNTs are an emerging class of carbon nanostructures and represent the simplest way to study the physical effects of coupling between the walls of carbon nanotubes.
\begin{figure}[ht]
\center
\begin{minipage}{0.4\textwidth}
\center
\includegraphics[width=0.4\textwidth]{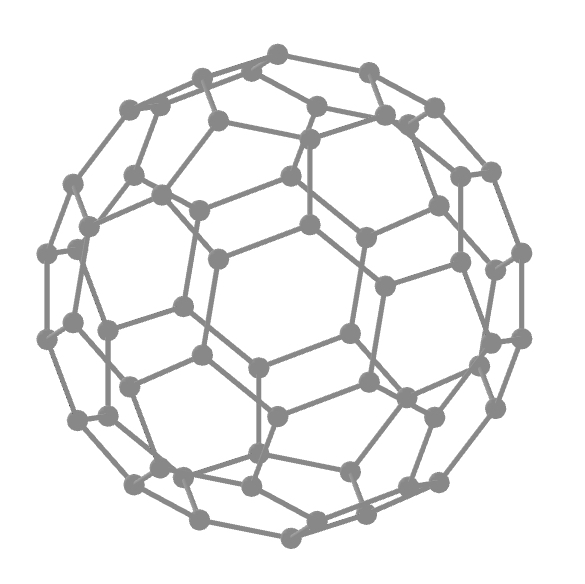}
\caption{Fullerene $C_{60}$}
\label{Left}
\end{minipage}\hspace*{1cm}
\begin{minipage}{0.4\textwidth}
\center
\includegraphics[width=0.4\textwidth]{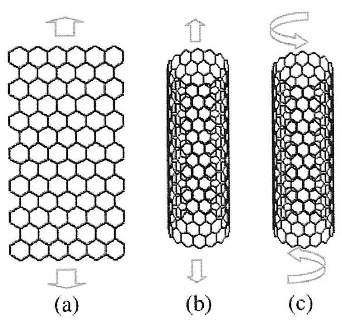}
\caption{Nanotube structure \cite{Silva2009}}
\label{Right}
\end{minipage}
\end{figure}

The discovery of this new structure at the molecular level contributed in the last decade to the advancement of nanotechnology. In \cite{YM2003} an analysis of the main properties of CNTs was presented, the study confirmed that CNTs have excellent properties; mechanical, electronic and chemical; they are about ten times stronger and six times lighter than steel. They transmit electricity like a superconductor and are excellent transmitters of temperature. Due to their superior electronic and mechanical properties to currently used materials, carbon nanotubes are candidates to be used in products and equipment that require nanoscale structures.

 In the future, CNTs should become the base material for nanoelectronics, nanodevices, and nanocomposites. The main problems that have to be overcome for this to happen are the difficult controlled experiments at the nanoscale: the high cost of molecular dynamics simulations and the high time consumption of these simulations. Knowing better the models of continuous mechanics, which are governed by the modeling through the Euler elastic beam model and the Timoshenko beam model  used to study the mechanics of linear and nonlinear deformations, should help to make this possible.
 
 The Euler-Bernoulli beam model disregards the effects of shear and rotation, and according to \cite{YM2002, YM2003} the vibrations in carbon nanotubes are animated by high frequencies, above $1Thz$. According to Yoon and others \cite {YM2005}, the effects of rotational inertia and shear are significant in the study of terahertz frequencies ($10^{12}$), hence Yoon \cite{YM2005} considers questionable the Euler-Bernoulli model applied to (CNTs). Therefore, the Timoshenko Model is the most suitable. For double-walled nanotubes (DWCNTs) or concentric multi-walled nanotubes (MWCNTs), the most used continuous models in the literature assume that all nested tubes of MWCNTs remain coaxial during deformation and thus can be described by a single deflection model. However, this model cannot be used to describe the relative vibration between adjacent tubes of MWCNTs. In 2003,  it was proposed by \cite{YM2003} that the fittings of concentric tubes are considered as individual beams, and that the deflections of all nested tubes are coupled through the van der Waals interaction force between two adjacent tubes \cite{JC2007,JC2008}. So, each of the inner and outer tubes is modeled as a beam.
 
 In the pioneering work on the carbon nanotube model by Yoon et al. \cite{YM2004}, the authors proposed a coupled system of partial differential equations inspired by the Timoshenko beam model to model DWCNTs. The model consists of the following equations
 \begin{eqnarray*}
 \rho A_1\dfrac{\partial^2 Y_1}{\partial t^2}-\kappa G A_1\bigg(\dfrac{\partial^2 Y_1}{\partial x^2}-\dfrac{\partial \varphi_1}{\partial x}   \bigg)-P & = & 0,\\
 \rho I_1 \dfrac{\partial^2 \varphi_1}{\partial t^2}-E I_1 \dfrac{\partial^2 \varphi_1}{\partial x^2}-\kappa G A_1 \bigg( \dfrac{\partial Y_1}{\partial x}-\varphi_1 \bigg ) & = & 0,\\
 \rho A_2\dfrac{\partial^2 Y_2}{\partial t^2}-\kappa G A_2\bigg(\dfrac{\partial^2 Y_2}{\partial x^2}-\dfrac{\partial \varphi_2}{\partial x}   \bigg)+P & = & 0,\\
 \rho I_2 \dfrac{\partial^2 \varphi_2}{\partial t^2}-E I_2 \dfrac{\partial^2 \varphi_2}{\partial x^2}-\kappa G A_2 \bigg( \dfrac{\partial Y_2}{\partial x}-\varphi_2 \bigg ) & = & 0.
 \end{eqnarray*}
 Where $Y_i$ and $\varphi_i$ ($i=1,2$) represent respectively the total deflection and the inclination due to the bending of the nanotube $i$ and the constants $I_i$, $A_i$ denote the moment of inertia and the cross-sectional area of the tube $i$,  respectively, and $P$ is the Van der Waals force acting on the interaction between the two tubes per unit of axial length. Also according to \cite{YM2004}, it can be seen that the deflections of the two tubes are coupled through the Van der Waals interaction $P$ (see \cite{Timoshenko1921}) between the two tubes, and as the tubes inside and outside of a DWCNTs are originally concentric, the Van der Waals interaction is determined by the spacing between the layers. Therefore, for a small-amplitude linear vibration, the interaction pressure at any point between the two tubes linearly depends on the difference in their deflection curves at that point, that is, it depends on the term
 \begin{equation}\label{Eq14ARamos}
 P=\jmath (Y_2-Y_1).
 \end{equation}
 In  particular, the Van der Waals interaction coefficient $\jmath$ for the interaction pressure per unit axial length can be estimated based on an effective interaction width of the tubes as found in \cite{YM2003A,Ru2000}. Thus, this model treats each of the nested and concentric nanotubes as individual Timoshenko beams interacting in the presence of Van der Waals forces (see Figure: \eqref{DWCNTs}).
 \begin{figure}[ht]
\begin{minipage}{1\textwidth}
\center
\includegraphics[width=0.95\textwidth]{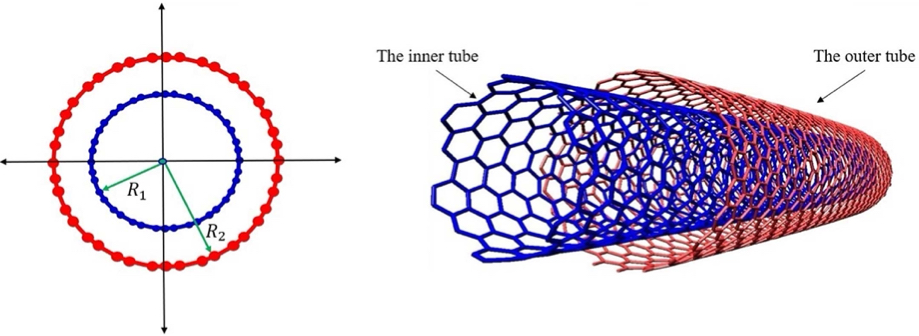}
\caption{2D and 3D Representations of the Double Wall Carbono Nanotubes Model \cite{Ramos2023CNTs}}
\label{DWCNTs}
\end{minipage}
\end{figure}
 
Currently in the literature there are few investigations related to the study of asymptotic behavior and/or regularity for DWCNTs models, or for DWCNTs systems coupled with the heat equation governed by Fourier's law (DWCNTs-Fourier). The DWCNTs model   was studied in 2015 in the thesis \cite{Nunes2015}, where the author studied the asymptotic behavior of the model:
\begin{eqnarray}
\label{Eq0.1}
\rho_1 \varphi_{tt}-\kappa_1(\varphi_x-\psi)_x-\jmath(y-\varphi)+\alpha_0\varphi_t=0\quad{\rm in} \quad (0,l)\times (0,\infty),\\
\label{Eq0.2}
\rho_2\psi_{tt}-b_1\psi_{xx}-\kappa_1(\varphi_x-\psi)+\alpha_1\psi_t=0\quad{\rm in} \quad (0,l)\times (0,\infty),\\
\label{Eq0.3}
\rho_3y_{tt}-\kappa_2(y_x-z)_x+\jmath(y-\varphi)+\alpha_2y_t=0\quad{\rm in} \quad (0,l)\times (0,\infty),\\
\label{Eq0.4}
\rho_4z_{tt}-b_2z_{xx}-\kappa_2(y_x-z)+\alpha_3 z_t=0\quad{\rm in} \quad (0,l)\times (0,\infty),
\end{eqnarray}
with the initial conditions
\begin{eqnarray}
\label{Eq01.5}
\varphi(x,0)=\varphi_0(x),\quad \varphi_t(x,0)=\varphi_1(x),\quad \psi(x,0)=\psi_0(x),\quad \psi_t(x,0)=\phi_1(x) & {\rm in}\quad x\in (0,l),
\\
\label{Eq01.6}
y(x,0)=y_0(x),\quad y_t(x,0)=y_1(x),\quad z(x,0)=z_0(x),\quad z_t(x,0)=z_1(x) & {\rm in}\quad x\in (0,l),
\end{eqnarray}
and subject to boundary conditions
\begin{eqnarray}
\label{Eq0.5}
\varphi(0,t)=\varphi(l,t)=\psi(0,t)=\psi(l,t)=0\quad{\rm for\quad all}\quad t>0,\\
\label{Eq0.6}
y(0,t)=y(l,t)=z(0,t)=z(l,t)=0\quad{\rm for\quad all}\quad t>0.
\end{eqnarray}
For the case that $\alpha_0=0$ and $\alpha_i>0$, for $i=1, 2, 3$, in \cite{Nunes2015} the author demonstrated the lack of exponential decay of the semigroup $(S(t))_{t\geq 0}$ associated with the system \eqref{Eq0.1}--\eqref{Eq0.6}, when $\frac{\rho_1}{\kappa_1}\not =\frac{\rho_2}{b_1}$ and $\jmath\big(\frac{\rho_2}{b_1}-\frac{\rho_1}{\kappa_1}\big)\not =\frac{\kappa_1}{ b_1}$, and also proved that if $\chi= \frac{\kappa_1\rho_2-b_1\rho_1}{\kappa_1^2-\jmath \rho_2\kappa_1+\jmath b_1\rho_1}=0$, then $(S(t))_{t\geq 0}$ is exponentially stable,  and if $\chi\not=0$,  $(S(t))_{t\geq 0}$ is exponentially stable. Beyond, is proved that if $\chi\not=0$, then $(S(t))_{t\geq 0}$ is polynomially stable with optimal rate $o(t^{-\frac{1}{2}})$. In addition, in Chapter 4 of \cite{Nunes2015}, the author validates through numerical analysis, using the finite difference method, the results previously demonstrated, and in addition to presenting graphs of other cases, such as considering $\alpha_i=0$ and $\alpha_i>0$ for $i=1,2,3,4$.

 Recently, in 2023, two new investigations emerged: One of them is the DWCNTs-Fourier system with friction dampers, see  \cite{Ramos2023CNTs}; in this work 
the authors consider the problem of heat conduction in carbon nanotubes modeled as Timoshenko beams, inspired by the work of Yoon et al. [Comp. Part B: Ing. 35 (2004) 87--93]. The system is given by
\begin{eqnarray}
\label{Eq0.10}
\rho_1 \varphi_{tt}-\kappa_1(\varphi_x-\psi)_x-\jmath(y-\varphi)+\gamma_1\varphi_t=0\quad{\rm in} \quad (0,l)\times (0,\infty),\\
\label{Eq0.11}
\rho_2\psi_{tt}-b_1\psi_{xx}-\kappa_1(\varphi_x-\psi)+\delta\theta_{xx}=0\quad{\rm in} \quad (0,l)\times (0,\infty),\\
\label{Eq0.12}
\rho_3y_{tt}-\kappa_2(y_x-z)_x+\jmath(y-\varphi)+\gamma_2y_t=0\quad{\rm in} \quad (0,l)\times (0,\infty),\\
\label{Eq0.13}
\rho_4z_{tt}-b_2z_{xx}-\kappa_2(y_x-z)+\gamma_3 z_t=0\quad{\rm in} \quad (0,l)\times (0,\infty),\\
\label{Eq0.14}
\rho_5\theta_t-K\theta_{xx}+\beta\psi_t=0\quad {\rm in}\quad (0,l)\times (0,\infty),
\end{eqnarray}
subject to boundary conditions \eqref{Eq0.5}, \eqref{Eq0.6} and 
\begin{equation}
\label{Eq0.15}
\theta(0,t)=\theta(l,t)=0\qquad \text{ for all}\quad t>0.
\end{equation}
Note that the system \eqref{Eq0.5}--\eqref{Eq0.15} presents three friction dissipators (weak damping): $\gamma_1\varphi_ t,\gamma_2y_t$ and $\gamma_3 z_t$. The authors apply semigroup theory of linear operators to demonstrate the exponential stabilization of the semigroup  $S(t)$ associated with the system \eqref{Eq0.5}--\eqref{Eq0.15}, and their results are independent of the relationship between the coefficients. Furthermore, they analyze the totally discrete problem using a finite difference scheme, introduced by a space-time discretization that combines explicit and implicit integration methods. The authors also show the  construction of numerical energy and simulations that validate the theoretical results of exponential decay and convergence rates.

 By the year of 2023,   \cite{MDCL2023} investigated the one-dimensional equations for the double wall carbon nanotubes modeled by coupled Timoshenko elastic beam system with nonlinear arbitrary localized damping:
 \begin{eqnarray}
\label{Eq0.20}
\rho_1 \varphi_{tt}-\kappa_1(\varphi_x-\psi)_x-\jmath(y-\varphi)+\alpha_1(x)g_1(\varphi_t)=0\quad{\rm in} \quad (0,l)\times (0,\infty),\\
\label{Eq0.21}
\rho_2\psi_{tt}-b_1\psi_{xx}-\kappa_1(\varphi_x-\psi)+\alpha_2(x)g_2(\psi_t)=0\quad{\rm in} \quad (0,l)\times (0,\infty),\\
\label{Eq0.22}
\rho_3y_{tt}-\kappa_2(y_x-z)_x+\jmath(y-\varphi)+\alpha_3(x)g_3(y_t)=0\quad{\rm in} \quad (0,l)\times (0,\infty),\\
\label{Eq0.23}
\rho_4z_{tt}-b_2z_{xx}-\kappa_2(y_x-z)+\alpha_4(x)g_4 (z_t)=0\quad{\rm in} \quad (0,l)\times (0,\infty),
\end{eqnarray}
where the localizing functions $\alpha_i(x)$ are supposed to be smooth and nonnegative, while the nonli-near functions $g_i(x), i = 1,\cdots, 4$, are continuous and monotonic increasing. The system \eqref{Eq0.20}--\eqref{Eq0.23} is subject to Dirichlet boundary conditions; in \eqref{Eq0.5} and \eqref{Eq0.6}, see \cite{MDCL2023},  the authors showed that damping placed on an arbitrary small support, not quantized at the origin, leads to uniform (time asymptotic) decay rates for the energy function of the system.

In the same direction of this last paper, we would like to mention the work of Shubov and Rojas-Arenaza \cite{SR2010} where they considered the system \eqref{Eq0.20}-\eqref{Eq0.23}  with $\alpha_i(x)=1, g_i(s)=s, i=1,\cdots,4$, initial conditions \eqref{Eq01.5}-\eqref{Eq01.6}. Subject to boundary conditions of type:
\begin{equation}\label{Eq2.1MG}
\left\{\begin{array}{cc}
\kappa_1(\varphi_x-\psi)(l,t)=-\rho_2\gamma_1\varphi_t(l,t) & t\geq 0,\\
b_1\psi_x(l,t)=-\rho_2\gamma_2\psi_t(l,t),  & t\geq 0,\\
\kappa_2(y_x-z)(l,t)=\rho_4\gamma_3y_t(l,t), & t\geq 0,
\\
b_2z_x(l,t)=-\rho_4\gamma_4z_t(l,t), & t\geq 0.
\end{array}\right.
\end{equation}
They first proved that the energy associated to the system,  with boundary conditions \eqref{Eq2.1MG}, is decreasing if $\jmath = 0$. After they proved that the semigroup generator is an unbounded non self-adjoint operator with a compact resolvent.
 
The two systems that we study in this research are for models of carbon nanotubes coupled with the heat equation given by Fourier's Law. The difference between these two systems is in the coupling of the DWCNTs model and the heat equation. The first system is a generalization of the model presented in \cite{Ramos2023CNTs}: we consider the 3 fractional damping; $\gamma_1(-\partial_{xx})^{\tau_1}\varphi_t$, $\gamma_2(- \partial_{xx})^{\tau_2}y_t$ and $\gamma_3(-\partial_{xx})^{\tau_3}z_t$, for the parameters $\tau_i, i=1,2,3$, varying in the interval $[0,1]$. We note that when $(\tau_1,\tau_2,\tau_3)=(0,0,0)$ the system is the one studied in \cite{Ramos2023CNTs}. The second system studied in this work is given by:
\begin{eqnarray}
\label{Eq2.1}
\rho_1 \varphi_{tt}-\kappa_1(\varphi_x-\psi)_x-\jmath(y-\varphi)+\gamma_1(-\partial_{xx})^{\beta_1}\varphi_t=0\quad{\rm in} \quad (0,l)\times (0,\infty),\\
\label{Eq2.2}
\rho_2\psi_{tt}-b_1\psi_{xx}-\kappa_1(\varphi_x-\psi)+\delta \theta_{xx}=0\quad{\rm in} \quad (0,l)\times (0,\infty),\\
\label{Eq2.3}
\rho_3y_{tt}-\kappa_2(y_x-z)_x+\jmath(y-\varphi)+\gamma_2(-\partial_{xx})^{\beta_2}y_t=0\quad{\rm in} \quad (0,l)\times (0,\infty),\\
\label{Eq2.4}
\rho_4z_{tt}-b_2z_{xx}-\kappa_2(y_x-z)+\gamma_3(-\partial_{xx})^{\beta_3}z_t=0\quad{\rm in} \quad (0,l)\times (0,\infty),\\
\label{Eq2.5}
\rho_5\theta_t-K\theta_{xx}-\delta \psi_{xxt}=0\quad{\rm in} \quad (0,l)\times (0,\infty).
\end{eqnarray}
We study the system \eqref{Eq2.1}--\eqref{Eq2.5} subject to boundary conditions 
\begin{eqnarray}
\label{Eq2.7}
\varphi(0,t)=\varphi(l,t)=\psi(0,t)=\psi(l,t)=0\quad{\rm for\quad all}\quad t>0,\\
\label{Eq2.8}
y(0,t)=y(l,t)=z(0,t)=z(l,t)=0\quad{\rm for\quad all}\quad t>0,\\
\label{Eq2.9}
\theta(0,t)=\theta(l,t)=0\quad{\rm for\quad all}\quad t>0.
\end{eqnarray}
And the initial conditions are given by
\begin{eqnarray}
\label{Eq2.10}
\varphi(x,0)=\varphi_0(x),\; \varphi_t(x,0)=\varphi_1(x),\; \psi(x,0)=\psi_0(x),\quad {\rm for}\;x\in(0,l),\\
\label{Eq2.11}
\psi_t(x,0)=\psi_1(x),\; y(x,0)=y_0(x),\; y_t(x,0)=y_1(x), \quad {\rm for}\;x\in(0,l),\\
\label{Eq2.12}
z(x,0)=z_0(x),\;z_t(x,0)=z_1(x),\; \theta(x,0)=\theta_0(x),\quad{\rm for}\;x\in(0,l).
\end{eqnarray}
Note that the difference between these systems is in the coupling term in the heat equation. For the first system, the coupling is $\beta\psi_t$, which presents a derivative of zero order with respect to the spatial variable. In the second system, the coupling term is given by $\delta\psi_{txx}$, which presents a second order derivative with respect to spatial variable $x$. The coupling considered by the second system is the most common, this type of coupling is known as strong coupling. In our research it helps to show the existence of Gevrey classes and also to demonstrate the analyticity of the associated $S_2(t)$ semigroup to the second system. During the development of this investigation, we were able to observe that the zero order of the derivative in relation to the space of the coupling term for the first system was decisive for not obtaining the estimates:  $|\lambda|^\phi\|v\|^2\leq C\|F\|_{\mathbb{H}_1}\|U\|_{\mathbb{H}_1}$  and $|\lambda|^\phi\|A^\frac{1}{2}\psi\|^2\leq C\|F\|_{\mathbb{H}_1}\|U\|_{\mathbb{H}_1}$ for $0<\phi\leq 1$, which made it impossible to obtain regularity results of the first system.

 During the last decades, various investigations focused on the study of the asymptotic behavior and regularity of the Tymoshenko beam system, thermoviscoelastic Timoshenko system with diffusion effect and also of Timoshenko beam systems coupled with heat equations from Fourier's law, Cateneo's and thermoelasticity of type III. Results of exponential decay and regularity for these systems are mostly provided with dissipative terms, at least in the equations that do not refer to heat or do not have heat coupling terms. We will cite some of these works below.

In 2005,  Raposo et al.\cite{Raposo2005} studies the Timoshenko system, provided for two frictional dissipations $\varphi_t$ and $\psi_t$, and proves that the semigroup associated with the system decays exponentially. For the same Timoshenko system, when the stress-strain constitutive law is of Kelvin-Voigt type, given by
\begin{equation*}
S=\kappa(\varphi_x+\psi)+\gamma_1(\varphi_x+\psi)_t\qquad\text{and}\qquad M=b\psi_x+\psi_{xt}, 
\end{equation*} 
Malacarne A. and  Rivera J. in \cite{AMJR2016} shows that $S(t)$ is analytical if and only if the viscoelastic damping is present in both the shear stress and the bending moment. Otherwise, the corresponding semigroup is not exponentially stable no matter the choice of the coefficients. They also showed that the solution decays polynomially to zero as $t^{-1/2}$, no matter where the viscoelastic mechanism is effective and that the rate is optimal whenever the initial data are taken on the domain of the infinitesimal operator. In 2023, Su\'arez \cite{Suarez2023} studied the regularity of the model given in \cite{Raposo2005}, substituting the two damping weaks $\varphi_t$ and $\psi_t$,  for fractional dampings $(-\partial_{xx})^\tau\varphi_t$ and $(-\partial_{xx})^\sigma\psi_t$ where the parameters $(\tau,\sigma)\in [0,1]$, and proved the existence of Gevrey classes $s>\frac{r+1}{2r}$, for $r=\min\{\tau,\sigma\},\quad  \forall (\tau,\sigma)\in(0,1)^2$,  of the semigroup $S(t)$ associated to the system,  and analyticity of $S(t)$ when the two parameters $\tau$ and $\sigma$ vary in the interval $[1/2,1]$.

In 2021,  M. Elhindi and T. EL Arwadi \cite{METEA2021} studied the Timoshenko beam model with thermal, mass diffusion and viscoelastic effects:
\begin{equation}\label{Eq2.2METEA}
\left\{ \begin{array}{l}
\rho_1\varphi_{tt}-\kappa(\varphi_x-\psi)_x-\gamma_1(\varphi_x+\psi)_{xt}=0,\\
\rho_2\psi_{tt}-\alpha\psi_{xx}-\gamma_2\psi_{xxt}+\kappa(\varphi_x+\psi)+\gamma_1(\varphi_x+\psi)_t-\xi_1\theta_x-\xi_2P_x=0,\\
c\theta_t+dP_t-\kappa\theta_{xx}-\xi_1\psi_{tx}=0,
\\
d\theta_t+rP_t-hP_{xx}-\xi_2\psi_{tx}=0.
\end{array}\right.
\end{equation}
 Using semigroup theory, they proved that the considered problem is well posed with the Dirichlet boundary conditions. An exponential decay is obtained by constructing the Lyapunov functional. Finally, a numerical study based on the $P_1-$finite element approximation for spatial discretization and implicit Euler scheme for temporal discretization is carried out, where the stability of the scheme is studied, as well as error analysis and some numerical simulations are obtained. By the year of 2023, Mendes et al. in \cite{FLF2023}, present the study of the regularity of two thermoelastic beam systems defined
by the Timoshenko beam model coupled with the heat conduction of Green-Naghdiy theory of
type III; both mathematical models are differentiated by their coupling terms that arise as a
consequence of the constitutive laws initially considered. The systems presented in this work
have 3 fractional dampings: $(-\partial_{xx})^\tau \phi_t, (-\partial_{xx})^\sigma \psi_t$  and $(-\partial_{xx})^\xi \theta_t$, where $\phi,\psi$ and $\theta$ are:
transverse displacement, rotation angle and empirical temperature of the beam respectively
and the parameters $(\tau,\sigma, \xi)\in [0,1]^3$. 

The main contribution
of this article is to show that the corresponding semigroup $S_i(t) = e^{\mathcal{B}_it}$, with $i = 1,2$, is of Gevrey class $s > (r+1)/(2r)$ for $r = \min\{\tau,\sigma,\xi\}\; \forall  (\tau,\sigma,\xi) \in  (0,1)^3$. It is also showed that $S_1(t) =e^{\mathcal{B}_1t}$ is analytic in the region $RA_1 := \{(\tau,\sigma,\xi)\in [1/2, 1]^3\}$  and $S_2(t) = e^{\mathcal{B}_2t}$ is analytic in the region $RA_2 :=\{ (\tau,\sigma,\xi)\in [1/2,1]^3/ \tau=\xi\}$. Some articles published in the last decade  that study the asymptotic behavior and regularity of coupled systems and/or fractional dissipations can be consulted at \cite{AmmariShelTebou2022,DOroPata2016,KLiuH2021,FMFSSB2023, HSLiuRacke2019}.

The paper is organized as follows. In section 2, we study the well-posedness  and exponential decay of the system \eqref{Eq1.1}-\eqref{Eq1.12} through semigroup theory.  In section 3, we study the well-posedness, exponential decay, existence of Gevrey classes and analyticity of the system \eqref{Eq2.13}--\eqref{Eq2.17} with initial conditions \eqref{Eq2.10}--\eqref{Eq2.12}, for all the results we use again the semigroup theory, the good properties of fractional operator $A^r:=(-\partial_{xx})^r$ for $r\in\mathbb{R}$, a proper decomposition of the functions $u,s,w$ and the Interpolation Theorem \ref{Lions-Landau-Kolmogorov}.

\section{System 01}
In this section we present the study of the well-posedness  and the exponential decay of the first system, for both results the semigroup theory is used, the first system is given by:  
\begin{eqnarray}
\label{Eq1.1}
\rho_1 \varphi_{tt}-\kappa_1(\varphi_x-\psi)_x-\jmath(y-\varphi)+\gamma_1(-\partial_{xx})^{\tau_1}\varphi_t=0\quad{\rm in} \quad (0,l)\times (0,\infty),\\
\label{Eq1.2}
\rho_2\psi_{tt}-b_1\psi_{xx}-\kappa_1(\varphi_x-\psi)+\delta\theta_{xx}=0\quad{\rm in} \quad (0,l)\times (0,\infty),\\
\label{Eq1.3}
\rho_3y_{tt}-\kappa_2(y_x-z)_x+\jmath(y-\varphi)+\gamma_2(-\partial_{xx})^{\tau_2}y_t=0\quad{\rm in} \quad (0,l)\times (0,\infty),\\
\label{Eq1.4}
\rho_4z_{tt}-b_2z_{xx}-\kappa_2(y_x-z)+\gamma_3(-\partial_{xx})^{\tau_3}z_t=0\quad{\rm in} \quad (0,l)\times (0,\infty),\\
\label{Eq1.5}
\rho_5\theta_t-K\theta_{xx}+\beta\psi_t=0\quad{\rm in} \quad (0,l)\times (0,\infty),
\end{eqnarray}
 subject to boundary conditions 
\begin{eqnarray}
\label{Eq1.7}
\varphi(0,t)=\varphi(l,t)=\psi(0,t)=\psi(l,t)=0\quad{\rm for\quad all}\quad t>0,\\
\label{Eq1.8}
y(0,t)=y(l,t)=z(0,t)=z(l,t)=0\quad{\rm for\quad all}\quad t>0,\\
\label{Eq1.9}
\theta(0,t)=\theta(l,t)=0\quad{\rm for\quad all}\quad t>0.
\end{eqnarray}
And the initial conditions are given by
\begin{eqnarray}
\label{Eq1.10}
\varphi(x,0)=\varphi_0(x),\; \varphi_t(x,0)=\varphi_1(x),\; \psi(x,0)=\psi_0(x),\quad {\rm for}\;x\in(0,l),\\
\label{Eq1.11}
\psi_t(x,0)=\psi_1(x),\; y(x,0)=y_0(x),\; y_t(x,0)=y_1(x), \quad {\rm for}\;x\in(0,l),\\
\label{Eq1.12}
z(x,0)=z_0(x),\;z_t(x,0)=z_1(x),\; \theta(x,0)=\theta_0(x),\quad{\rm for}\;x\in(0,l).
\end{eqnarray}
Let's define the operator $A\colon \mathfrak{D}(A) = H^2(0,l)\cap H_0^1(0,l)\to L^2(0,l)$,  such that    $A :=-\partial_{xx}$. Using this  operator $A$ the system \eqref{Eq1.1}-\eqref{Eq1.12} can be written in the following  setting
\begin{eqnarray}
\label{Eq1.13}
\rho_1 \varphi_{tt}+\kappa_1A\varphi+\kappa_1\psi_x-\jmath(y-\varphi)+\gamma_1A^{\tau_1}\varphi_t=0\quad{\rm in} \quad (0,l)\times (0,\infty),\\
\label{Eq1.14}
\rho_2\psi_{tt}+b_1A\psi-\kappa_1(\varphi_x-\psi)-\delta A\theta=0\quad{\rm in} \quad (0,l)\times (0,\infty),\\
\label{Eq1.15}
\rho_3y_{tt}+\kappa_2 Ay+\kappa_2 z_x+\jmath(y-\varphi)+\gamma_2A^{\tau_2}y_t=0\quad{\rm in} \quad (0,l)\times (0,\infty),\\
\label{Eq1.16}
\rho_4z_{tt}+b_2Az-\kappa_2(y_x-z)+\gamma_3 A^{\tau_3}z_t=0\quad{\rm in} \quad (0,l)\times (0,\infty),\\
\label{Eq1.17}
\rho_5\theta_t+KA\theta+\beta\psi_t=0\quad{\rm in} \quad (0,l)\times (0,\infty),
\end{eqnarray}
with the initial conditions \eqref{Eq1.10}--\eqref{Eq1.12}.
\begin{remark} It is known that this operator $A:=-\partial_{xx}$ is strictly positive,   selfadjoint,    has a compact inverse, and has compact resolvent.  And the operator $A^{\sigma}$ is self-adjoint positive for all $\sigma\in\R$, bounded for $\sigma\leq 0$, and  the embedding
\begin{equation*}
\mathfrak{D}(A^{\sigma_1})\hookrightarrow \mathfrak{D}(A^{\sigma_2}),
\end{equation*}
is continuous for $\sigma_1>\sigma_2$.  Here,  the norm in $\mathfrak{D}(A^{\sigma})$ is given by $\|u\|_{\mathfrak{D}(A^{\sigma})}:=\|A^{\sigma}u\|$, $u\in \mathfrak{D}(A^{\sigma})$, where $\dual{\cdot}{\cdot}$ and $\|\cdot\|$ denotes the inner product and norm in the complex Hilbert space $\mathfrak{D}(A^0)=L^2(0,l)$.  Some of the most used spaces at work are  $\mathfrak{D}(A^\frac{1}{2})=H_0^1(0,l)$ and $\mathfrak{D}(A^{-\frac{1}{2}})=H^{-1}(0,l)$.
\end{remark}
\subsection{Well-posedness of the System 01}
Next we are going to rewrite our system \eqref{Eq1.10}--\eqref{Eq1.17} in Cauchy abstract form to apply semigroup theory: 

Taking, $\varphi_t=u$, $\psi_t=v$, $y_t=s$ and $z_t=w$,   
 the initial boundary value problem \eqref{Eq1.7}-\eqref{Eq1.17} can be reduced to the following abstract initial value problem for a first-order evolution equation
 \begin{equation}\label{Fabstrata}
    \frac{d}{dt}U(t)=\mathbb{B}_i U(t),\quad    U(0)=U_0,
\end{equation}

 where $U(t)=(\varphi,u,\psi, v,y, s,z, w, \theta)^T$,  $U_0=(\varphi_0,\varphi_1,\psi_0,\psi_1, y_0,y_1,z_0,z_1,\theta_0)^T$, $i=1,2$  and   the operator $\mathbb{B}_1\colon \mathfrak{D}(\mathbb{B}_1)\subset \mathbb{H}_1\to\mathbb{H}_1$ is given by
\begin{equation}\label{operadorAgamma}
 \mathbb{B}_1U:=\left(\begin{array}{c}
 u\\
-\dfrac{\kappa_1}{\rho_1}A\varphi-\dfrac{\kappa_1}{\rho_1}\psi_x+\dfrac{\jmath}{\rho_1}(y-\varphi)-\dfrac{\gamma_1}{\rho_1}A^{\tau_1}u\\
 v\\
-\dfrac{b_1}{\rho_2}A\psi+\dfrac{\kappa_1}{\rho_2}(\varphi_x-\psi)+\dfrac{\delta}{\rho_2}A\theta\\
s\\
-\dfrac{\kappa_2}{\rho_3}Ay-\dfrac{\kappa_2}{\rho_3}z_x-\dfrac{\jmath}{\rho_3}(y-\varphi)-\dfrac{\gamma_2}{\rho_3}A^{\tau_2}s\\
w\\
-\dfrac{b_2}{\rho_4}Az+\dfrac{\kappa_2}{\rho_4}(y_x-z)-\dfrac{\gamma_3}{\rho_4}A^{\tau_3}w\\
-\dfrac{K}{\rho_5}A\theta-\dfrac{\beta}{\rho_5}v
 \end{array}\right),
\end{equation}
where,  $\mathfrak{D}(\mathbb{B}_1)$ and $\mathbb{H}_1$, will be defined next. Taking the duality product between equation \eqref{Eq1.13} and  $\varphi_t$, \eqref{Eq1.14} with $\psi_t$, \eqref{Eq1.15} with $y_t$, \eqref{Eq1.16} with $z_t$   and \eqref{Eq1.17} with $\frac{\delta}{\beta}\theta_{xx}$,  and taking advantage of the self-adjointness of the powers of the operator $A$ and as from boundary condition $z(0,t)=z(l,t)=0$, we have  $\|z_x\|^2=\dual{z_x}{z_x}=\int_0^lz_x\overline{z_x}dx=\int Az\overline{z}dx+z_x\overline{z}|_0^l=\dual{Az}{z}=\|A^\frac{1}{2}z\|^2$, similarly we have $\|\psi_x\|^2=\|A^\frac{1}{2}\psi\|^2$ and $\|\theta_x\|^2=\|A^\frac{1}{2}\theta\|^2$.   For every solution of the system \eqref{Eq1.7}-\eqref{Eq1.17}  the total energy $\mathfrak{E}_1\colon \mathbb{R}^+\to\mathbb{R}^+$ is given in the $t$ by    
\begin{multline}\label{Energia01}
\mathfrak{E}_1(t)=\frac{1}{2}\bigg[ \rho_1\|\varphi_t\|^2+\rho_2\|\psi_t\|^2+\rho_3\|y_t\|^2+\rho_4\|z_t\|^2+b_1\|A^\frac{1}{2}\psi\|^2+b_2\|A^\frac{1}{2}z\|^2
\\  +\kappa_1\|\varphi_x-\psi\|^2 +\kappa_2\|y_x-z\|^2+\jmath\|y-\varphi\|^2+\dfrac{\rho_5\delta}{\beta}\|A^\frac{1}{2}\theta\|^2 \bigg ],
\end{multline}
and satisfies
\begin{equation}\label{Dissipa01}
\dfrac{d}{dt}\mathfrak{E}_1(t)=-\gamma_1\|A^\frac{\tau_1}{2}\varphi_t\|^2-\gamma_2\|A^\frac{\tau_2}{2}y_t\|^2-\gamma_3\|A^\frac{\tau_3}{2}z_t\|^2-\dfrac{\delta K}{\beta}\|A\theta\|^2.
\end{equation}

This operator will be defined in a suitable subspace of the phase space
$$
\mathbb{H}_1:=[\mathfrak{D}(A^\frac{1}{2})\times\mathfrak{D}(A^0)]^4\times\mathfrak{D}(A^\frac{1}{2}),
$$
that is a Hilbert space with the inner product
\begin{eqnarray*}
\dual{ U_1}{U_2}_{\mathbb{H}_1} & := & \rho_1\dual{u_1}{u_2}+\rho_2\dual{v_1}{v_1}+\rho_3\dual{s_1}{s_2}+\rho_4\dual{w_1}{w_2}+b_1\dual{\psi_{1,x}}{\psi_{2,x}}\\
&  & +b_2\dual{z_{1,x}}{z_{2,x}}+\kappa_1\dual{\varphi_{1,x}-\psi_1}{\varphi_{2,x}-\psi_2} + \kappa_2\dual{y_{1,x}-z_1}{y_{2,x}-z_2}\\
& & +\jmath\dual{y_1-\varphi_1}{y_2-\varphi_2}+\dfrac{\rho_5\delta}{\beta}\dual{\theta_{1,x}}{\theta_{2,x}}.
\end{eqnarray*}
For $U_i=(\varphi_i,u_i,\psi_i, v_i,y_i, s_i,z_i,w_i,  \theta_i)^T\in \mathbb{H}_1$,  $i=1,2$  and induced norm:
\begin{multline}\label{NORM}
\|U\|_{\mathbb{H}_1}^2:=\rho_1\|u\|^2+\rho_2\|v\|^2+\rho_3\|w\|^2+\rho_4\|s\|^2+b_1\|A^\frac{1}{2}\psi\|^2+b_2\|A^\frac{1}{2}z\|^2
\\  +\kappa_1\|\varphi_x-\psi\|^2 +\kappa_2\|y_x-z\|^2+\jmath\|y-\varphi\|^2+\dfrac{\rho_5\delta}{\beta}\|A^\frac{1}{2}\theta\|^2.
\end{multline}
In these conditions, we define the domain of $\mathbb{B}_1$ as
\begin{multline}\label{dominioB}
    \mathfrak{D}(\mathbb{B}_1):= \Big \{ U\in \mathbb{H}_1 \colon  (u,v,s,w)\in  [\mathfrak{D}(A^\frac{1}{2})]^4,\theta \in \mathfrak{D}(A^\frac{1}{2})\cap H^3(0,l)\quad {\rm and}\\
     (\varphi,\psi,y,z) \in (\mathfrak{D}(A)\cap\mathfrak{D}(A^{\tau_1}))\times \mathfrak{D}(A)\times (\mathfrak{D}(A)\cap\mathfrak{D}(A^{\tau_2}))\times (\mathfrak{D}(A)\cap\mathfrak{D}(A^{\tau_3}))
       \Big\},
\end{multline}
And it is easy to verify, that
\begin{equation}\label{disipative}
{\rm Re}\dual{\mathbb{B}U}{U}_{\mathbb{H}_1}=-\gamma_1\|A^\frac{\tau_1}{2}u\|^2-\gamma_2\|A^\frac{\tau_2}{2}s\|^2-\gamma_3\|A^\frac{\tau_3}{2}w\|^2-\dfrac{\delta K}{\beta}\|A\theta\|^2\leq 0.
\end{equation}
To show that the operator $\mathbb{B}$ is the generator of a $C_0$-semigroup,  we invoke a result from Liu-Zheng \cite{LiuZ}.

\begin{theorem}[see Theorem 1.2.4 in \cite{LiuZ}] \label{TLiuZ}
Let $\mathbb{B}$ be a linear operator with domain $\mathfrak{D}(\mathbb{B})$ dense in a Hilbert space $\mathbb{H}$. If $\mathbb{B}$ is dissipative and $0\in\rho(\mathbb{B})$, the resolvent set of $\mathbb{B}$, then $\mathbb{B}$ is the generator of a $C_0$-semigroup of contractions on $\mathbb{H}$.
\end{theorem}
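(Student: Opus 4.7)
The plan is to deduce the statement from the classical Lumer--Phillips theorem, according to which a densely defined dissipative operator $\mathbb{B}$ on a Hilbert space $\mathbb{H}$ generates a $C_0$-semigroup of contractions if and only if $\mathrm{Range}(\lambda I-\mathbb{B})=\mathbb{H}$ for some $\lambda>0$. Two of the three Lumer--Phillips hypotheses are already in the statement: density of $\mathfrak{D}(\mathbb{B})$ and dissipativity. The only nontrivial task is to promote the single hypothesis $0\in\rho(\mathbb{B})$ into the required range condition at some positive real $\lambda$.

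First I would record that $\rho(\mathbb{B})\neq\emptyset$ forces $\mathbb{B}$ to be closed: $\mathbb{B}^{-1}$ is a bounded operator defined on all of $\mathbb{H}$, hence has closed graph, and therefore its algebraic inverse $\mathbb{B}$ is closed as well. Next I would invoke the standard fact that the resolvent set of a closed operator is open in $\mathbb{C}$, with the resolvent $R(\lambda,\mathbb{B})$ depending analytically on $\lambda$. Since $0\in\rho(\mathbb{B})$, there exists $\varepsilon>0$ such that the open disk $\{\lambda\in\mathbb{C}\colon|\lambda|<\varepsilon\}$ lies inside $\rho(\mathbb{B})$. In particular, any real $\lambda_0\in(0,\varepsilon)$ belongs to $\rho(\mathbb{B})$, so $\lambda_0 I-\mathbb{B}\colon\mathfrak{D}(\mathbb{B})\to\mathbb{H}$ is a bijection and $\mathrm{Range}(\lambda_0 I-\mathbb{B})=\mathbb{H}$.

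With density, dissipativity, and surjectivity of $\lambda_0 I-\mathbb{B}$ for this specific $\lambda_0>0$ in hand, the Lumer--Phillips theorem directly yields that $\mathbb{B}$ generates a $C_0$-semigroup of contractions on $\mathbb{H}$. If one prefers the Hille--Yosida route, dissipativity already gives $\|(\lambda I-\mathbb{B})u\|\geq\lambda\|u\|$ for every $\lambda>0$ and $u\in\mathfrak{D}(\mathbb{B})$; combined with surjectivity at $\lambda_0$ and a standard continuation argument along the connected component of $\rho(\mathbb{B})$ containing $\lambda_0$, one obtains $(0,\infty)\subset\rho(\mathbb{B})$ together with the resolvent bound $\|R(\lambda,\mathbb{B})\|\leq 1/\lambda$, and Hille--Yosida then concludes.

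The main obstacle, and really the only subtle ingredient beyond citing the classical generation theorems, is the passage from the single point $0\in\rho(\mathbb{B})$ to a point $\lambda_0>0$ in $\rho(\mathbb{B})$. This step is handled cleanly by the openness of the resolvent set, which promotes a single resolvent point into an entire neighborhood intersecting the positive real axis. Every remaining step is bookkeeping inside the Lumer--Phillips or Hille--Yosida framework and requires no new functional analytic input.
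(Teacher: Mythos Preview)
Your argument is correct: closedness of $\mathbb{B}$ follows from $0\in\rho(\mathbb{B})$, openness of the resolvent set then produces some $\lambda_0>0$ in $\rho(\mathbb{B})$, and Lumer--Phillips finishes the job. This is indeed the standard way to derive the statement.

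However, there is nothing to compare against: the paper does not prove Theorem~\ref{TLiuZ}. It is stated as a quoted result from Liu--Zheng's monograph, and the ``proof'' environment that follows simply reads ``See Lemma 2.1 \cite{Ramos2023CNTs}'', i.e.\ a bare citation. So you have supplied an actual proof where the paper only gives a reference; your approach is not different from the paper's approach because the paper has no approach of its own here.
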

\begin{proof}
See Lemma 2.1 \cite{Ramos2023CNTs}. 
\end{proof}
 As a consequence of the previous Theorem \ref{TLiuZ},  we obtain
\begin{theorem}
Given $U_0\in\mathbb{H}$ there exists a unique weak solution $U$ to  the problem \eqref{Fabstrata} satisfying 
$$U\in C([0, +\infty), \mathbb{H}).$$
Futhermore,  if $U_0\in  \mathfrak{D}(\mathbb{B}^k), \; k\in\mathbb{N}$, then the solution $U$ of \eqref{Fabstrata} satisfies
$$U\in \bigcap_{j=0}^kC^{k-j}([0,+\infty),  \mathfrak{D}(\mathbb{B}^j).$$
\end{theorem}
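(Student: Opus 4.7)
The plan is to read this as a direct corollary of classical $C_0$-semigroup theory, once the generation statement for $\mathbb{B}_1$ is in hand. The hypotheses of Theorem \ref{TLiuZ} have essentially been verified: dissipativity is the content of \eqref{disipative}, density of $\mathfrak{D}(\mathbb{B}_1)$ in $\mathbb{H}_1$ is immediate from the definition of the domain, and $0\in\rho(\mathbb{B}_1)$ is obtained (as in Ramos et al.) by solving the stationary system $\mathbb{B}_1 U=F$ for arbitrary $F\in\mathbb{H}_1$. For the latter, from the odd-indexed components one immediately reads off $u,v,s,w$, and substitution produces a coupled elliptic system in $(\varphi,\psi,y,z,\theta)$ that is handled by Lax--Milgram on the natural product Hilbert space; the fractional terms $\gamma_iA^{\tau_i}$ contribute nonnegative quadratic forms and cause no loss of coercivity, while the second-order coupling through $A\theta$ in the $\psi$-equation and $\beta v$ in the $\theta$-equation is absorbed into the symmetric bilinear form by an appropriate choice of test functions.

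With the semigroup $\{S_1(t)\}_{t\geq 0}$ of contractions in hand, I would define $U(t):=S_1(t)U_0$ for $U_0\in\mathbb{H}_1$. Strong continuity of $S_1(\cdot)$ gives $U\in C([0,\infty),\mathbb{H}_1)$, and uniqueness of the mild/weak solution to \eqref{Fabstrata} follows at once from the contractive estimate $\|S_1(t)\|\leq 1$: any two mild solutions have a difference that satisfies the same Cauchy problem with zero initial data, and contractivity then forces it to vanish identically.

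For the regularity statement I would argue by induction on $k$. The base case $k=0$ is the first part. Assuming the result at level $k-1$, take $U_0\in\mathfrak{D}(\mathbb{B}_1^k)$. Standard semigroup theory gives that $\mathfrak{D}(\mathbb{B}_1^k)$ is invariant under $S_1(t)$, that $\mathbb{B}_1$ commutes with $S_1(t)$ on its domain, and that for $V_0\in\mathfrak{D}(\mathbb{B}_1)$ the map $t\mapsto S_1(t)V_0$ belongs to $C^1([0,\infty),\mathbb{H}_1)\cap C([0,\infty),\mathfrak{D}(\mathbb{B}_1))$ with $\tfrac{d}{dt}S_1(t)V_0=\mathbb{B}_1 S_1(t)V_0$. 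Iterating this identity, one obtains $\tfrac{d^j}{dt^j}U(t)=\mathbb{B}_1^jS_1(t)U_0=S_1(t)\mathbb{B}_1^jU_0\in C([0,\infty),\mathfrak{D}(\mathbb{B}_1^{k-j}))$ for $0\leq j\leq k$, which is equivalent to $U\in C^{k-j}([0,\infty),\mathfrak{D}(\mathbb{B}_1^j))$. Intersecting over $j=0,\ldots,k$ yields the stated regularity.

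No genuine obstacle arises at this step; the real technical work is in the resolvent computation underlying Theorem \ref{TLiuZ}, not in the corollary itself. The only point deserving care is the bookkeeping in the induction, where one must verify that the graph-norm topology on $\mathfrak{D}(\mathbb{B}_1^j)$ makes both the equality $\tfrac{d^j}{dt^j}U=S_1(t)\mathbb{B}_1^jU_0$ a genuine continuous derivative and the map $t\mapsto S_1(t)\mathbb{B}_1^jU_0$ continuous into $\mathfrak{D}(\mathbb{B}_1^{k-j})$; both follow from the commutation $\mathbb{B}_1^{k-j}S_1(t)=S_1(t)\mathbb{B}_1^{k-j}$ on $\mathfrak{D}(\mathbb{B}_1^{k-j})$.
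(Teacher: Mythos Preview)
Your proposal is correct and matches the paper's approach: the paper does not give a separate proof of this theorem at all, simply presenting it with the sentence ``As a consequence of the previous Theorem~\ref{TLiuZ}, we obtain'' and then stating the result. You have supplied exactly the standard semigroup-theoretic argument that justifies that consequence, with more detail than the paper itself provides.
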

\begin{theorem}[Hille-Yosida]\label{THY} A linear (unbounded) operator $\mathbb{B}$ is the infinitesimal generator of a $C_0-$semigroup of contractions $S(t)$, $ t\geq 0$, if and only if\\
$(i)$ $\mathbb{B}$ is closed and $\overline{\mathfrak{D}(\mathbb{B})}=\mathbb{H}$,\\
$(ii)$ the resolvent set $\rho(\mathbb{B})$ of $\mathbb{B}$ contains $\mathbb{R}^+$ and for every $\lambda>0$, 
\begin{equation*}
\|(\lambda I-\mathbb{B})^{-1}\|_{\mathcal{L}(\mathbb{H})}\leq\dfrac{1}{\lambda}.
\end{equation*}
\end{theorem}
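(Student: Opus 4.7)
The plan is to prove the Hille--Yosida characterization in two separate implications: the necessity (generator $\Rightarrow$ resolvent estimate) and the sufficiency (resolvent estimate $\Rightarrow$ generator), with the latter being substantially deeper. Throughout I will use only the definition of a $C_0$-semigroup (strong continuity at $t=0$ plus the semigroup property) and the uniform exponential bound, which for contractions is $\|S(t)\|\leq 1$.

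For the forward direction, I would start from the standard fact that the infinitesimal generator of any $C_0$-semigroup is closed with dense domain; this follows from integrating $S(t)x$ against smooth cutoffs and observing that the integrals lie in $\mathfrak{D}(\mathbb{B})$ and converge strongly to $x$, while closedness follows by passing to the limit in the difference quotient identity. Then for each $\lambda>0$ I would define the candidate resolvent by the Laplace-type formula
\begin{equation*}
R(\lambda)f:=\int_0^\infty e^{-\lambda t}S(t)f\,dt,
\end{equation*}
which converges absolutely by contractivity and satisfies $\|R(\lambda)f\|\leq \lambda^{-1}\|f\|$. A direct computation using the semigroup property yields $R(\lambda)(\lambda I-\mathbb{B})x=x$ on $\mathfrak{D}(\mathbb{B})$ and $(\lambda I-\mathbb{B})R(\lambda)f=f$ on $\mathbb{H}$, showing $\lambda\in\rho(\mathbb{B})$ with the required norm bound.

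For the converse, the natural tool is the \emph{Yosida approximation}. Given the resolvent estimate, I would set
\begin{equation*}
\mathbb{B}_\lambda:=\lambda\mathbb{B}(\lambda I-\mathbb{B})^{-1}=\lambda^2(\lambda I-\mathbb{B})^{-1}-\lambda I,
\end{equation*}
which is a bounded operator for each $\lambda>0$, and exponentiate to obtain a bounded semigroup $e^{t\mathbb{B}_\lambda}$. The crucial lemma is that $\|e^{t\mathbb{B}_\lambda}\|\leq 1$ for every $\lambda,t>0$; this is where the hypothesis $\|(\lambda I-\mathbb{B})^{-1}\|\leq 1/\lambda$ enters, via the identity $e^{t\mathbb{B}_\lambda}=e^{-\lambda t}e^{t\lambda^2(\lambda I-\mathbb{B})^{-1}}$ combined with the series bound $\|e^{t\lambda^2(\lambda I-\mathbb{B})^{-1}}\|\leq e^{\lambda t}$. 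I would then show $\lambda(\lambda I-\mathbb{B})^{-1}x\to x$ and $\mathbb{B}_\lambda x\to \mathbb{B} x$ as $\lambda\to\infty$ for $x\in \mathfrak{D}(\mathbb{B})$, using density of $\mathfrak{D}(\mathbb{B})$ in $\mathbb{H}$ for the first limit and the definition of $\mathbb{B}_\lambda$ for the second. Estimating $\|e^{t\mathbb{B}_\lambda}x-e^{t\mathbb{B}_\mu}x\|$ through the integral representation $\int_0^t \frac{d}{ds}(e^{(t-s)\mathbb{B}_\mu}e^{s\mathbb{B}_\lambda}x)\,ds$ and the contraction bounds yields a Cauchy sequence, whose limit defines $S(t)x$ on $\mathfrak{D}(\mathbb{B})$ and extends by density to $\mathbb{H}$.

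The main obstacle I expect is verifying that the limit $S(t)$ is indeed a $C_0$-semigroup with generator exactly $\mathbb{B}$ (not merely an extension). The semigroup property and contractivity transfer cleanly from $e^{t\mathbb{B}_\lambda}$, and strong continuity at $t=0$ follows because on $\mathfrak{D}(\mathbb{B})$ one has $\|e^{t\mathbb{B}_\lambda}x-x\|\leq t\|\mathbb{B}_\lambda x\|$, which passes to the limit. To identify the generator, I would show that if $\widetilde{\mathbb{B}}$ denotes the generator of $S(t)$, then $\mathbb{B}\subseteq \widetilde{\mathbb{B}}$ by differentiating the limit, and then use that both $\lambda I-\mathbb{B}$ and $\lambda I-\widetilde{\mathbb{B}}$ are bijective from their respective domains onto $\mathbb{H}$ (the first by hypothesis, the second by the forward direction already proved) to conclude $\mathfrak{D}(\mathbb{B})=\mathfrak{D}(\widetilde{\mathbb{B}})$. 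This bijectivity comparison is the subtle step where one must avoid circularity.
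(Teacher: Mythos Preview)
Your sketch is correct and is precisely the classical argument: Laplace transform of the semigroup for the forward implication, Yosida approximants $\mathbb{B}_\lambda=\lambda^2(\lambda I-\mathbb{B})^{-1}-\lambda I$ for the converse, with the final identification of the generator via bijectivity of $\lambda I-\mathbb{B}$ on both domains. There is no circularity in that last step, since the forward direction has already been established independently.

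The paper, however, does not prove this theorem at all; its ``proof'' is the single line \emph{See \cite{Pazy}}. The Hille--Yosida theorem is quoted here purely as background machinery, not as a result of the paper, so there is nothing to compare your argument against beyond noting that what you have written is exactly the proof one finds in Pazy's monograph. In other words, you have reproduced the referenced proof rather than diverged from it.
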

\begin{proof}
See \cite{Pazy}.
\end{proof}
\subsection{Exponential Decay of System 01,  for $(\tau_1,\tau_2,\tau_3)\in [0,1]^3$}\label{3.1}
In this section, we will study the asymptotic behavior of the semigroup of the system \eqref{Eq1.10}-\eqref{Eq1.17}.  We will use the following spectral characterization of exponential stability of semigroups due to Gearhart\cite{Gearhart} (Theorem 1.3.2  book of Liu-Zheng \cite{LiuZ}).
\begin{theorem}[see \cite{LiuZ}]\label{LiuZExponential}
Let $S(t)=e^{t\mathbb{B}}$ be  a  $C_0$-semigroup of contractions on  a Hilbert space $ \mathbb{H}$. Then $S(t)$ is exponentially stable if and only if  
	\begin{equation}\label{EImaginario}
\rho(\mathbb{B})\supseteq\{ i\lambda;  \lambda\in \R \} 	\equiv i\R
\end{equation}
and
\begin{equation}\label{Exponential}
 \limsup\limits_{|\lambda|\to
   \infty}   \|(i\lambda I-\mathbb{B})^{-1}\|_{\mathcal{L}( \mathbb{H})}<\infty
\end{equation}
holds.
\end{theorem}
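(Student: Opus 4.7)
This is the classical Gearhart-Pr\"uss-Huang spectral characterization of exponential stability for $C_0$-semigroups on a Hilbert space; in the paper the proof will be simply referenced to \cite{LiuZ}. If I were to carry the argument out in detail, my plan would be to split necessity and sufficiency and to handle the harder sufficiency direction by a Plancherel / Datko argument.

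For the necessity direction, I would assume $\|S(t)\|_{\mathcal{L}(\mathbb{H})}\leq Me^{-\omega t}$ with $\omega>0$, so that the Laplace transform
\begin{equation*}
R(\lambda)=\int_0^\infty e^{-\lambda t}S(t)\,dt
\end{equation*}
converges absolutely in $\mathcal{L}(\mathbb{H})$ whenever $\operatorname{Re}\lambda>-\omega$ and coincides with $(\lambda I-\mathbb{B})^{-1}$. This immediately provides $i\mathbb{R}\subset\rho(\mathbb{B})$ together with the uniform bound $\|(i\lambda-\mathbb{B})^{-1}\|\leq M/\omega$ for all $\lambda\in\mathbb{R}$, which is both \eqref{EImaginario} and \eqref{Exponential}.

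The sufficiency direction is the real work. My strategy is to combine a Paley-Wiener / Plancherel identity with Datko's theorem. For $x\in\mathcal{D}(\mathbb{B})$ and $\varepsilon>0$, extend $f_\varepsilon(t)=e^{-\varepsilon t}S(t)x$ by zero to $t<0$; for $\varepsilon$ larger than the growth bound, $f_\varepsilon\in L^2(\mathbb{R};\mathbb{H})$ with Fourier transform $\widehat{f_\varepsilon}(\lambda)=((\varepsilon+i\lambda)I-\mathbb{B})^{-1}x$, and Plancherel yields
\begin{equation*}
\int_0^\infty e^{-2\varepsilon t}\|S(t)x\|^2\,dt=\frac{1}{2\pi}\int_{-\infty}^{\infty}\|((\varepsilon+i\lambda)I-\mathbb{B})^{-1}x\|^2\,d\lambda.
\end{equation*}
Using the resolvent identity $((\varepsilon+i\lambda)-\mathbb{B})^{-1}=(i\lambda-\mathbb{B})^{-1}-\varepsilon(i\lambda-\mathbb{B})^{-1}((\varepsilon+i\lambda)-\mathbb{B})^{-1}$ and the hypothesis \eqref{Exponential}, one can pass to $\varepsilon\to 0^+$ by dominated convergence to obtain $S(\cdot)x\in L^2(\mathbb{R}_+;\mathbb{H})$ first on $\mathcal{D}(\mathbb{B})$ and then on all of $\mathbb{H}$ by density. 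Datko's theorem then converts this $L^2$-integrability into exponential decay.

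The main obstacle I foresee is the $\varepsilon\to 0^+$ limit: assumption \eqref{Exponential} only says the resolvent is \emph{uniformly bounded} on $i\mathbb{R}$, not integrable, so the passage requires a careful use of the resolvent identity combined with dissipativity of $\mathbb{B}$ (inherited from being the generator of a contraction semigroup) to produce a dominating function in $\lambda$ that is independent of $\varepsilon$. Once that analytic step is secured, the remainder of the argument is routine semigroup theory, which is why in practice one simply cites \cite{LiuZ}.
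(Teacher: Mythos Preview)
Your anticipation is correct: the paper does not prove this theorem at all, it merely states it with the attribution ``see \cite{LiuZ}'' and moves on, exactly as you predicted. So at the level of what the paper actually does, your proposal matches perfectly.

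Your supplementary sketch is the standard Gearhart--Pr\"uss route and is broadly sound, but the obstacle you flag in the $\varepsilon\to 0^+$ step is genuine and is not resolved by the ingredients you list: a uniform bound on $\|(i\lambda-\mathbb{B})^{-1}\|$ alone does not give an $L^2(\mathbb{R})$ dominating function, and dissipativity of $\mathbb{B}$ by itself does not supply one either. The usual fix is to take $x\in\mathfrak{D}(\mathbb{B})$ and write $(i\lambda-\mathbb{B})^{-1}x=\frac{1}{i\lambda}\big(x+(i\lambda-\mathbb{B})^{-1}\mathbb{B}x\big)$ for $|\lambda|$ large, which produces the needed $1/\lambda^2$ decay at infinity; the passage to all of $\mathbb{H}$ then follows by density and the uniform boundedness of the resulting $L^2$ estimate. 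With that correction your outline would go through, though since the paper itself simply cites \cite{LiuZ}, none of this is needed for the comparison.
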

\begin{remark}\label{EqvExponential}
 Note that to show the condition \eqref{Exponential} for system 01: \eqref{Eq1.10}-\eqref{Eq1.17}, it is enough to show that: Let $\delta>0$. There exists a constant $C_\delta>0$ such that the solutions of the system \eqref{Eq1.10}-\eqref{Eq1.17} for $|\lambda|>\delta$,  satisfy the inequality 
 \begin{equation}\label{EqvExponencial}
 \|U\|_{\mathbb{H}_1}\leq C_\delta\|F\|_{\mathbb{H}_1}\qquad {\rm for}\quad 0\leq\tau_1,\tau_2,\tau_3\leq 1.
 \end{equation}
 \end{remark}
To use Theorem \ref{LiuZExponential},  we will try to obtain some estimates  for
 $$U=(\varphi,u,\psi, v,y, s,z,w, \theta)^T\in \mathfrak{D}(\mathbb{B}_1)\;{\rm and}\; F=(f^1, f^2, f^3, f^4, f^5, f^6, f^7, f^8, f^9)^T\in \mathbb{H}_1,$$  such that $(i\lambda I-\mathbb{B}_1)U=F$, where $\lambda\in \R$. This system, written in components,  reads
\begin{eqnarray}
i\lambda \varphi-u &=& f^1\quad {\rm in}\quad \mathfrak{D}(A^\frac{1}{2})\label{Pesp-10}\\
i\lambda u+\dfrac{\kappa_1}{\rho_1}A\varphi+\dfrac{\kappa_1}{\rho_1}\psi_x-\dfrac{\jmath}{\rho_1}(y-\varphi)+\dfrac{\gamma_1}{\rho_1}A^{\tau_1}u &=& f^2\quad {\rm in}\quad \mathfrak{D}(A^0)\label{Pesp-20}\\
i\lambda\psi-v &= & f^3\quad {\rm in}\quad \mathfrak{D}(A^\frac{1}{2})\label{Pesp-30}\\
i\lambda v+\dfrac{b_1}{\rho_2}A\psi-\dfrac{\kappa_1}{\rho_2}(\varphi_x-\psi)-\dfrac{\delta}{\rho_2}A\theta &=& f^4\quad {\rm in}\quad \mathfrak{D}(A^0) \label{Pesp-40}\\
i\lambda y-s &= &  f^5\quad {\rm in}\quad \mathfrak{D}(A^\frac{1}{2})\label{Pesp-50}\\
i\lambda s+\dfrac{\kappa_2}{\rho_3}Ay+\dfrac{\kappa_2}{\rho_3}z_x+\dfrac{\jmath}{\rho_3}(y-\varphi)+\dfrac{\gamma_2}{\rho_3}A^{\tau_2} s &=& f^6\quad {\rm in}\quad \mathfrak{D}(A^0) \label{Pesp-60}\\
i\lambda z-w &= & f^7\quad {\rm in}\quad \mathfrak{D}(A^\frac{1}{2})\label{Pesp-70}\\
i\lambda w+\dfrac{b_2}{\rho_4}Az-\dfrac{\kappa_2}{\rho_4}(y_x-z)+\dfrac{\gamma_3}{\rho_4}A^{\tau_3}w &= & f^8\quad {\rm in}\quad \mathfrak{D}(A^0) \label{Pesp-80}\\
i\lambda\theta+\dfrac{K}{\rho_5} A\theta+\dfrac{\beta}{\rho_5} v &=& f^9 \quad {\rm in}\quad \mathfrak{D}(A^\frac{1}{2}). \label{Pesp-90}
\end{eqnarray}
From \eqref{disipative},   we have the first estimate
\begin{multline*}
|\gamma_1\|A^\frac{\tau_1}{2}u\|^2+\gamma_2\|A^\frac{\tau_2}{2}s\|^2+\gamma_3\|A^\frac{\tau_3}{2}w\|^2+\dfrac{\delta K}{\beta}\|A\theta\|^2|\\
 = |-{\rm Re}\dual{\mathbb{B}U}{U}|=|{\rm Re}\{ \dual{i\lambda U-F}{U}  \}|\\
\leq  |\dual{F}{U}|\leq \|F\|_{\mathbb{H}_1}\|\|U\|_{\mathbb{H}_1}.
\end{multline*}
Therefore
\begin{equation}\label{dis-10}
\gamma_1\|A^\frac{\tau_1}{2}u\|^2+\gamma_2\|A^\frac{\tau_2}{2}s\|^2+\gamma_3\|A^\frac{\tau_3}{2}w\|^2+\dfrac{\delta K}{\beta}\|A\theta\|^2 \leq  \|F\|_{\mathbb{H}_1}\|\|U\|_{\mathbb{H}_1}.
\end{equation}
Next, we show some lemmas that will lead us to the proof of the main theorem of this section.
\begin{lemma}\label{Lemma3}
Let $\delta>0$.  There exists $C_\delta>0$ such that the solutions of the system \eqref{Eq1.10}-\eqref{Eq1.17}  for  $(\tau_1,\tau_2,\tau_3)\in[0,1]^3$, and for $\varepsilon>0$, exists $C_\varepsilon>0$ independent of $\lambda$, such that
\begin{eqnarray}\label{Item01Lemma3}
\|v\|^2 & \leq &C_\varepsilon \|F\|_{\mathbb{H}_1}\|U\|_{\mathbb{H}_1}+\varepsilon\|U\|^2_{\mathbb{H}_1}.
\end{eqnarray}
\end{lemma}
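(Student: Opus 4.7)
The plan is to bound $\|v\|^2$ through the heat equation \eqref{Pesp-90}, rewritten as $\beta v = \rho_5 f^9 - i\lambda\rho_5\theta - KA\theta$. Taking norms and applying the elementary inequality $\|a+b+c\|^2\leq 3(\|a\|^2+\|b\|^2+\|c\|^2)$ yields $\|v\|^2\leq C(\|f^9\|^2+\lambda^2\|\theta\|^2+\|A\theta\|^2)$. The dissipation identity \eqref{dis-10} gives $\|A\theta\|^2\leq C\|F\|_{\mathbb{H}_1}\|U\|_{\mathbb{H}_1}$, and since we may assume $\|F\|_{\mathbb{H}_1}\leq\|U\|_{\mathbb{H}_1}$ (otherwise the conclusion is trivial), we also have $\|f^9\|^2\leq\|F\|_{\mathbb{H}_1}\|U\|_{\mathbb{H}_1}$. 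The whole problem therefore reduces to proving the analogous bound $\lambda^2\|\theta\|^2\leq C_\varepsilon\|F\|_{\mathbb{H}_1}\|U\|_{\mathbb{H}_1}+\varepsilon\|U\|_{\mathbb{H}_1}^2$.

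To attack this, I would first test \eqref{Pesp-90} against $\theta$ and take the imaginary part, producing $\lambda\rho_5\|\theta\|^2=\rho_5\,\mathrm{Im}\dual{f^9}{\theta}-\beta\,\mathrm{Im}\dual{v}{\theta}$; multiplying by $\lambda$ isolates
$$\lambda^2\rho_5\|\theta\|^2=\lambda\rho_5\,\mathrm{Im}\dual{f^9}{\theta}-\beta\lambda\,\mathrm{Im}\dual{v}{\theta}.$$
The crucial move is that $\lambda\,\mathrm{Im}\dual{v}{\theta}$ can be rewritten in $\lambda$-free form by using \eqref{Pesp-40}. Indeed, taking the inner product of \eqref{Pesp-40} with $\theta$ and then the real part, and using $\mathrm{Re}(i\lambda z)=-\lambda\,\mathrm{Im}(z)$, one obtains
$$\lambda\rho_2\,\mathrm{Im}\dual{v}{\theta}=b_1\,\mathrm{Re}\dual{A^{1/2}\psi}{A^{1/2}\theta}-\kappa_1\,\mathrm{Re}\dual{\varphi_x-\psi}{\theta}-\delta\|A^{1/2}\theta\|^2-\rho_2\,\mathrm{Re}\dual{f^4}{\theta}.$$
This is where the main technical obstacle lies: each of these four terms must collapse into the target form $C_\varepsilon\|F\|\|U\|+\varepsilon\|U\|^2$. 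The ingredients are (i) the norm bounds $\|A^{1/2}\psi\|,\|\varphi_x-\psi\|\leq C\|U\|_{\mathbb{H}_1}$ coming from the definition of $\|\cdot\|_{\mathbb{H}_1}$, and (ii) the consequences $\|A^{1/2}\theta\|^2,\|\theta\|^2\leq C\|F\|\|U\|$ obtained from $\|A\theta\|^2\leq C\|F\|\|U\|$ through interpolation (via $\|A^{1/2}\theta\|^2=\dual{A\theta}{\theta}\leq\|A\theta\|\|\theta\|$) and Poincar\'e's inequality; products like $\|U\|\cdot(\|F\|\|U\|)^{1/2}$ then yield $\varepsilon\|U\|^2+C_\varepsilon\|F\|\|U\|$ by Young's inequality.

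Finally, for the remaining term $\lambda\rho_5\,\mathrm{Im}\dual{f^9}{\theta}$, Young's inequality in the form $\rho_5|\lambda|\|f^9\|\|\theta\|\leq\varepsilon\rho_5\lambda^2\|\theta\|^2+C_\varepsilon\rho_5\|f^9\|^2\leq\varepsilon\rho_5\lambda^2\|\theta\|^2+C_\varepsilon\|F\|\|U\|$ produces an $\varepsilon$-small copy of $\lambda^2\|\theta\|^2$ on the right, which is absorbed into the left by choosing $\varepsilon$ sufficiently small. This finishes the bound on $\lambda^2\|\theta\|^2$ and hence, upon substituting into the first paragraph, on $\|v\|^2$. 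Note that the argument uses neither the fractional dampings $\tau_1,\tau_2,\tau_3$ nor the lower bound $|\lambda|>\delta$ in an essential way, so the estimate holds uniformly for $(\tau_1,\tau_2,\tau_3)\in[0,1]^3$ and all $\lambda\in\mathbb{R}$.
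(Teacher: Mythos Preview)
Your argument is correct, but it takes a more circuitous path than the paper's. The paper pairs \eqref{Pesp-90} directly with $v$, obtaining
\[
\tfrac{\beta}{\rho_5}\|v\|^2=\dual{\theta}{i\lambda v}-\tfrac{K}{\rho_5}\dual{A\theta}{v}+\dual{f^9}{v},
\]
and then substitutes $i\lambda v$ from \eqref{Pesp-40}; after this single substitution every term is already of the form $\|A^{1/2}\theta\|\cdot\|U\|$ or $\|A\theta\|\cdot\|v\|$ or $\|F\|\cdot\|U\|$, and Young's inequality together with \eqref{dis-10} finishes the proof immediately. You instead isolate $v$ pointwise from \eqref{Pesp-90}, which forces you to control the auxiliary quantity $\lambda^2\|\theta\|^2$; to do so you test \eqref{Pesp-90} against $\theta$, then invoke \eqref{Pesp-40} to strip the factor $\lambda$ from $\lambda\,\mathrm{Im}\dual{v}{\theta}$, and finally run an absorption argument for the leftover $\varepsilon\lambda^2\|\theta\|^2$. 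The underlying mechanism---trading $i\lambda v$ for the right-hand side of \eqref{Pesp-40} and cashing in the dissipative bound on $\|A\theta\|$---is the same in both proofs; the paper's version simply reaches it in one step rather than three, and avoids both the intermediate quantity $\lambda^2\|\theta\|^2$ and the reduction to $\|F\|_{\mathbb{H}_1}\le\|U\|_{\mathbb{H}_1}$.
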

\begin{proof} Applying  the duality product between \eqref{Pesp-90} and $v$ and using \eqref{Pesp-40}, we have
\begin{eqnarray*}
\dfrac{\beta}{\rho_5}\|v\|^2=\dual{\theta}{i\lambda v}-\dfrac{K}{\rho_5}\dual{A\theta}{v}+\dual{f^9}{v}\\
=\dual{\theta}{-\dfrac{b_1}{\rho_2}A\psi+\dfrac{\kappa_1}{\rho_2}(\varphi_x-\psi)+\dfrac{\delta}{\rho_2}A\theta+f^4}-\dfrac{K}{\rho_5}\dual{A\theta}{v}+\dual{f^9}{v}\\
=-\dfrac{b_1}{\rho_2}\dual{A^\frac{1}{2}\theta}{A^\frac{1}{2}\psi}+\dfrac{\kappa_1}{\rho_2}\dual{\theta}{(\varphi_x-\psi)}+\dfrac{\delta}{\rho_2}\|A^\frac{1}{2}\theta\|^2+\dual{\theta}{f^4}-\dfrac{K}{\rho_5}\dual{A\theta}{v}+\dual{f^9}{v}.
\end{eqnarray*}
Applying Cauchy-Schwarz  and Young inequalities, continuous immersions: $\mathfrak{D}(A)\hookrightarrow \mathfrak{D}(A^\frac{1}{2})\hookrightarrow\mathfrak{D}(A^0)$, for $\varepsilon>0$, exists $C_\varepsilon>0$, such that
\begin{equation*}
\|v\|^2\leq C_\varepsilon\|A\theta\|^2+\varepsilon\{ \|A^\frac{1}{2}\psi\|^2+ \|\varphi_x-\psi\|^2+\|v\|^2\}+\|\theta\|\|f^4\|+\|f^9\|\|v\|,
\end{equation*}
 from  estimative \eqref{dis-10},   finish proof this item.
\end{proof}
\begin{lemma}\label{Lemma2}
Let $\delta>0$.  There exists $C_\delta>0$ such that the solutions of the system \eqref{Eq1.10}-\eqref{Eq1.17}  for $|\lambda| > \delta$ and $(\tau_1,\tau_2,\tau_3)\in [0,1]^3$,  satisfy
\begin{eqnarray}
\label{Item01Lemma2}
(i)\quad|\lambda|\| y-\varphi\|^2  \leq  C_\delta\|F\|_{\mathbb{H}_1}\|U\|_{\mathbb{H}_1},\\
\label{Item02Lemma2}
(ii)\quad \kappa_1\|\varphi_x-\psi\|^2+b_1\|A^\frac{1}{2}\psi\|^2\leq \varepsilon\|U\|^2_{\mathbb{H}_1}+C_\delta\|F\|_{\mathbb{H}_1}\|U\|_{\mathbb{H}_1},\\
\label{Item03Lemma2}
(iii)\quad \kappa_2\|y_x-z\|^2+b_2\|A^\frac{1}{2}z\|^2\leq C_\delta\|F\|_{\mathbb{H}_1}\|U\|_{\mathbb{H}_1}.
\end{eqnarray}
\end{lemma}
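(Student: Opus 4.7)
My plan is to combine the dissipation identity \eqref{dis-10} with standard Timoshenko multiplier identities, invoking Lemma \ref{Lemma3} and part $(i)$ as stepping stones for $(ii)$ and $(iii)$. Since $A\ge c_0 I$ on $L^2(0,l)$, one has for free the Poincar\'e-type bound $\|v\|\le c_0^{-\tau/2}\|A^{\tau/2}v\|$ for every $\tau\in[0,1]$, so \eqref{dis-10} immediately yields the preliminary estimate $\|u\|^2+\|s\|^2+\|w\|^2+\|A\theta\|^2\le C\|F\|_{\mathbb{H}_1}\|U\|_{\mathbb{H}_1}$.

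For $(i)$, I would subtract \eqref{Pesp-10} from \eqref{Pesp-50} to get $i\lambda(y-\varphi)=(s-u)+(f^5-f^1)$, take the $L^2$ pairing with $y-\varphi$, and apply the weighted Young inequality $\|s-u\|\,\|y-\varphi\|\le \tfrac{1}{2|\lambda|}\|s-u\|^2+\tfrac{|\lambda|}{2}\|y-\varphi\|^2$. Using the preliminary bound on $\|s-u\|$, the hypothesis $|\lambda|\ge\delta$, and $\|y-\varphi\|\le C\|U\|_{\mathbb{H}_1}$, the $|\lambda|\|y-\varphi\|^2$ term on the right is absorbed and $(i)$ follows.

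For $(ii)$, I take the duality pairing of $\rho_1\cdot$\eqref{Pesp-20} with $\varphi$ and of $\rho_2\cdot$\eqref{Pesp-40} with $\psi$. Using the regrouping $\kappa_1A\varphi+\kappa_1\psi_x=-\kappa_1(\varphi_x-\psi)_x$ and integration by parts, the first pairing produces $\kappa_1\|\varphi_x-\psi\|^2$ and the second produces $b_1\|A^{1/2}\psi\|^2$. I rewrite $i\lambda\dual{u}{\varphi}$ and $i\lambda\dual{v}{\psi}$ via \eqref{Pesp-10} and \eqref{Pesp-30} as $-\|u\|^2-\dual{u}{f^1}$ and $-\|v\|^2-\dual{v}{f^3}$, respectively. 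When the two identities are added, the cross terms $\pm\kappa_1\dual{\varphi_x-\psi}{\psi}$ cancel and the right-hand side becomes
\begin{equation*}
\rho_1\|u\|^2+\rho_2\|v\|^2+\jmath\dual{y-\varphi}{\varphi}-\gamma_1\dual{A^{\tau_1}u}{\varphi}+\delta\dual{A^{1/2}\theta}{A^{1/2}\psi}+[\text{data}].
\end{equation*}
The first two summands are controlled by the preliminary bound and Lemma \ref{Lemma3} (this is where the term $\varepsilon\|U\|_{\mathbb{H}_1}^2$ enters). For the coupling $\jmath\dual{y-\varphi}{\varphi}$, item $(i)$ gives $\|y-\varphi\|\le C_\delta|\lambda|^{-1/2}(\|F\|_{\mathbb{H}_1}\|U\|_{\mathbb{H}_1})^{1/2}$, so Cauchy--Schwarz produces a $(\|F\|\|U\|)^{1/2}\|U\|$ factor which Young's inequality splits as $\varepsilon\|U\|^2+C_{\varepsilon,\delta}\|F\|\|U\|$. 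The two remaining terms are handled the same way using $\|A^{\tau_1/2}u\|$ and $\|A\theta\|$ from \eqref{dis-10}.

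For $(iii)$, I proceed symmetrically: test $\rho_3\cdot$\eqref{Pesp-60} with $y$ and $\rho_4\cdot$\eqref{Pesp-80} with $z$, rewrite the $i\lambda$-terms via \eqref{Pesp-50} and \eqref{Pesp-70}, and add so that the cross terms $\pm\kappa_2\dual{y_x-z}{z}$ cancel. The tricky coupling $-\jmath\dual{y-\varphi}{y}$ is split as $-\jmath\|y-\varphi\|^2-\jmath\dual{y-\varphi}{\varphi}$ and bounded using $(i)$ exactly as in the previous paragraph. The main obstacle throughout is the cross coupling $\jmath(y-\varphi)$: without the $|\lambda|$-weighted bound from $(i)$, Cauchy--Schwarz alone produces a term of order $\|F\|^{1/2}\|U\|^{3/2}$ which cannot be absorbed into the required right-hand side; this is precisely why the items must be proved in the order Lemma \ref{Lemma3} $\Rightarrow (i)\Rightarrow (ii),(iii)$.
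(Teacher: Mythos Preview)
Your approach is essentially the same multiplier strategy as the paper's: subtract \eqref{Pesp-10} from \eqref{Pesp-50} and pair with $y-\varphi$ for $(i)$; pair \eqref{Pesp-20} with $\varphi$ and \eqref{Pesp-40} with $\psi$ for $(ii)$; and pair \eqref{Pesp-60} with $y$ and \eqref{Pesp-80} with $z$ for $(iii)$. Parts $(i)$ and $(ii)$ are handled correctly and match the paper.

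There is one small discrepancy in $(iii)$. As stated, $(iii)$ carries \emph{no} $\varepsilon\|U\|_{\mathbb{H}_1}^2$ term, and the paper achieves this by two substitutions you do not make. For the damping terms $\gamma_2\dual{A^{\tau_2}s}{y}$ and $\gamma_3\dual{A^{\tau_3}w}{z}$ the paper replaces $s=i\lambda y-f^5$ and $w=i\lambda z-f^7$, so after taking real parts the contributions $i\lambda\gamma_2\|A^{\tau_2/2}y\|^2$ and $i\lambda\gamma_3\|A^{\tau_3/2}z\|^2$ vanish and only $\|F\|_{\mathbb{H}_1}\|U\|_{\mathbb{H}_1}$--type remainders survive. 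For the coupling $-\jmath\dual{y-\varphi}{y}$ the paper substitutes $y=-i\lambda^{-1}(s+f^5)$, producing $|\lambda|^{-1}\|y-\varphi\|\,\|s\|$, which with $(i)$ and the dissipation bound on $\|s\|$ is $\le C_\delta|\lambda|^{-3/2}\|F\|_{\mathbb{H}_1}\|U\|_{\mathbb{H}_1}$. Your splitting $\dual{y-\varphi}{y}=\|y-\varphi\|^2+\dual{y-\varphi}{\varphi}$ followed by Cauchy--Schwarz and Young, and your direct Cauchy--Schwarz on the damping terms, both leave factors of type $\|F\|_{\mathbb{H}_1}^{1/2}\|U\|_{\mathbb{H}_1}^{3/2}$ that force an $\varepsilon\|U\|_{\mathbb{H}_1}^2$ on the right. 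This weaker version of $(iii)$ is still perfectly adequate for the exponential decay theorem (the $\varepsilon\|U\|_{\mathbb{H}_1}^2$ is absorbed at the end), so the gap is cosmetic; but to recover $(iii)$ exactly as stated you should use the paper's substitution $y=-i\lambda^{-1}(s+f^5)$ (or equivalently $\varphi=-i\lambda^{-1}(u+f^1)$ in your splitting) together with the real-part trick for the damping terms.
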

\begin{proof}$\! (i)$ Making the difference between the equations \eqref{Pesp-50} and \eqref{Pesp-10}, we have 
$$i\lambda(y-\varphi)-(s-u)=f^5-f^1.$$
 taking the duality product between this last equation and $y-\varphi$, we arrive at:
\begin{equation}\label{Eq01Lemma3}
i\lambda\|y-\varphi\|^2=\dual{s}{y-\varphi}-\dual{u}{y-\varphi}+\dual{f^5}{y-\varphi}-\dual{f^1}{y-\varphi}.
\end{equation}
Applying Cauchy-Schwarz and Young inequalities, norms $\|F\|_{\mathbb{H}_1}$ and $\|U\|_{\mathbb{H}_1}$, and for $\varepsilon>0$, exists $C_\varepsilon>0$, such that
\begin{equation}\label{Eq02Lemma3}
|\lambda| \|y-\varphi\|^2\leq C_\varepsilon\{ \|s\|^2+\|u\|^2\}+\varepsilon\|y-\varphi\|^2+C_\delta\|F\|_{\mathbb{H}_1}\|U\|_{\mathbb{H}_1}.
\end{equation}
 finally the estimative \eqref{dis-10} and considering $|\lambda|>\delta>1$,  we finish proof of item $(i)$.\\
$(ii)$ Performing the duality product of \eqref{Pesp-20} for $\rho_1\varphi$ and using \eqref{Pesp-10}, we obtain
\begin{multline*}
\kappa_1\dual{(\varphi_x-\psi)}{\varphi_x}=\rho_1\dual{u}{i\lambda\varphi}+\jmath\dual{(y-\varphi)}{u}-\gamma_1\dual{A^{\tau_1}u}{\varphi}+\rho_1\dual{f^2}{\varphi}\\
=\rho_1\|u\|^2+\rho_1\dual{u}{f^1}+\jmath\dual{(y-\varphi)}{u}-i\lambda\gamma_1\|A^\frac{\tau_1}{2}\varphi\|^2\\
+\gamma_1\dual{A^\frac{\tau_1}{2}f^1}{A^\frac{\tau_1}{2}\varphi}
+\rho_1\dual{f^2}{\varphi},
\end{multline*}
now, performing the duality product of \eqref{Pesp-40} for $\rho_2\psi$ and using \eqref{Pesp-30}, we obtain
\begin{equation*}
\kappa_1\dual{(\varphi_x-\psi)}{\psi}=-\rho_2\|v\|^2-\rho_2\dual{v}{f^3}+b_1\|A^\frac{1}{2}\psi\|^2-\delta\dual{A\theta}{\psi}-\rho_2\dual{f^4}{\psi},
\end{equation*}
subtracting the last two equations, we have
\begin{multline}
\label{Eq02Lemma2}
\kappa_1\|\varphi_x-\psi\|^2+b_1\|A^\frac{1}{2}\psi\|^2=\rho_2\|v\|^2+\rho_1\|u\|^2+\rho_1\dual{u}{f^1}+\jmath\dual{(y-\varphi)}{u}-i\lambda\gamma_1\|A^\frac{\tau_1}{2}\varphi\|^2\\
+\gamma_1\dual{A^\frac{\tau_1}{2}f^1}{A^\frac{\tau_1}{2}\varphi}
+\rho_1\dual{f^2}{\varphi}+\rho_2\dual{v}{f^3}+\delta\dual{A\theta}{\psi}+\rho_2\dual{f^4}{\psi}.
\end{multline}
Taking real part in \eqref{Eq02Lemma2},   applying Cauchy-Schwarz and Young inequalities, estimates \eqref{dis-10}, Lemma \ref{Lemma3} and \eqref{Item01Lemma2} (item $(i)$  in this lemma), we finish proof of item $(ii)$.\\
 $(iii)$ Performing the duality product of \eqref{Pesp-60} for $\rho_3y$ and using \eqref{Pesp-50}, we obtain
\begin{multline*}
\kappa_2\dual{(y_x-z)}{y_x}=\rho_3\dual{s}{i\lambda y}-\jmath\dual{(y-\varphi)}{y}-\gamma_2\dual{A^{\tau_2}s}{y}+\rho_3\dual{f^6}{y}\\
=\rho_3\|s\|^2+\rho_3\dual{s}{f^5}-\jmath\dual{(y-\varphi)}{y}-i\lambda\gamma_2\|A^\frac{\tau_2}{2}y\|^2\\
+\gamma_2\dual{A^\frac{\tau_2}{2}f^1}{A^\frac{\tau_2}{2}y}
+\rho_3\dual{f^6}{y}\\
=\rho_3\|s\|^2+\rho_3\dual{s}{f^5}-\dfrac{i\jmath}{\lambda}\dual{(y-\varphi)}{f^5}-\dfrac{i\jmath}{\lambda}\dual{(y-\varphi)}{s}\\
-i\lambda\gamma_2\|A^\frac{\tau_2}{2}y\|^2+\gamma_2\dual{A^\frac{\tau_2}{2}f^1}{A^\frac{\tau_2}{2}y}
+\rho_3\dual{f^6}{y}
\end{multline*}
now, performing the duality product of \eqref{Pesp-80} for $\rho_4z$ and using \eqref{Pesp-70}, we obtain
\begin{multline*}
\kappa_2\dual{(y_x-z)}{z}=-\rho_4\|w\|^2-\rho_4\dual{w}{f^7}+b_2\|A^\frac{1}{2}z\|^2+i\lambda\gamma_3\|A^\frac{\tau_3}{2}z\|^2\\
-\gamma_3\dual{A^\frac{\tau_3}{2} f^7}{A^\frac{\tau_3}{2}z}-\rho_4\dual{f^8}{z},
\end{multline*}
subtracting the last two equations, we have
\begin{multline}
\label{Eq03Lemma2}
\hspace*{-0.25cm}\kappa_2\|y_x-z\|^2+b_2\|A^\frac{1}{2}z\|^2=\rho_3\|s\|^2+\rho_4\|w\|^2 +\rho_3\dual{s}{f^5}-\dfrac{i\jmath}{\lambda}\dual{(y-\varphi)}{f^5}-\dfrac{i\jmath}{\lambda}\dual{(y-\varphi)}{s}\\
-i\lambda\gamma_2\|A^\frac{\tau_2}{2}y\|^2+\gamma_2\dual{A^\frac{\tau_2}{2}f^1}{A^\frac{\tau_2}{2}y}
+\rho_3\dual{f^6}{y}+\rho_4\dual{w}{f^7}\\
-i\lambda\gamma_3\|A^\frac{\tau_3}{2}z\|^2+\gamma_3\dual{A^\frac{\tau_3}{2}f^7}{A^\frac{\tau_3}{2}z}+\rho_4\dual{f^8}{z}.
\end{multline}
Taking real part in \eqref{Eq03Lemma2},   applying Cauchy-Schwarz and Young inequalities, estimative \eqref{dis-10},  norms $\|F\|_{\mathbb{H}_1}$ and $\|U\|_{\mathbb{H}_1}$  and item $(i)$ this lemma,  we finish proof of item $(iii)$.
\end{proof}
\begin{theorem}\label{TDExponential}
The semigroup $S_1(t) = e^{t\mathbb{B}_1}$, is exponentially stable as long as the parameters $(\tau_1,\tau_2,\tau_3)\in [0,1]^ 3 $.
\end{theorem}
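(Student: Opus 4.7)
The plan is to apply the Gearhart--Pr\"uss characterization stated as Theorem \ref{LiuZExponential}, which requires both that the imaginary axis lie in the resolvent set of $\mathbb{B}_1$ and that the resolvent be uniformly bounded there. Thanks to Remark \ref{EqvExponential}, the quantitative step reduces to proving the inequality $\|U\|_{\mathbb{H}_1}\leq C_\delta\|F\|_{\mathbb{H}_1}$ uniformly in $|\lambda|>\delta$; once this is in hand, combining it with $0\in\rho(\mathbb{B}_1)$ (from the well-posedness subsection) and the standard continuity argument on the resolvent set will yield $i\mathbb{R}\subset\rho(\mathbb{B}_1)$.

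My first move would be to write $\|U\|_{\mathbb{H}_1}^2$ as the sum of the ten terms appearing in \eqref{NORM} and to bound each of them by a constant multiple of $\|F\|_{\mathbb{H}_1}\|U\|_{\mathbb{H}_1}$, allowing also a small multiple of $\|U\|_{\mathbb{H}_1}^2$ that can be absorbed later. The dissipation identity \eqref{dis-10} directly controls $\|A^{\tau_1/2}u\|$, $\|A^{\tau_2/2}s\|$, $\|A^{\tau_3/2}w\|$ and $\|A\theta\|$. Since $\tau_i\in[0,1]$ gives $\tau_i/2\geq 0$ and hence a continuous embedding $\mathfrak{D}(A^{\tau_i/2})\hookrightarrow L^2(0,l)$, we obtain $\|u\|^2,\|s\|^2,\|w\|^2\leq C\|F\|_{\mathbb{H}_1}\|U\|_{\mathbb{H}_1}$; likewise $\|A^{1/2}\theta\|^2\leq C\|A\theta\|^2\leq C\|F\|_{\mathbb{H}_1}\|U\|_{\mathbb{H}_1}$. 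For $\|v\|^2$ I would invoke Lemma \ref{Lemma3}; for the terms involving $\varphi_x-\psi$ and $A^{1/2}\psi$, and for $y_x-z$ and $A^{1/2}z$, I would invoke items (ii) and (iii) of Lemma \ref{Lemma2}; and the coupling term $\jmath\|y-\varphi\|^2$ is handled by item (i) of Lemma \ref{Lemma2}, whose $|\lambda|^{-1}$ factor is the reason the hypothesis $|\lambda|>\delta$ suffices.

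Collecting all contributions produces an inequality of the form
\[
\|U\|_{\mathbb{H}_1}^2\leq C_1\,\varepsilon\,\|U\|_{\mathbb{H}_1}^2+C_\delta(\varepsilon)\,\|F\|_{\mathbb{H}_1}\|U\|_{\mathbb{H}_1},
\]
where the $\varepsilon$-dependent summand comes from the absorbable parts in Lemma \ref{Lemma3} and Lemma \ref{Lemma2}(ii). Fixing $\varepsilon$ small enough that $C_1\varepsilon\leq 1/2$, absorbing the corresponding summand into the left-hand side, and dividing by $\|U\|_{\mathbb{H}_1}$ yields $\|U\|_{\mathbb{H}_1}\leq C_\delta\|F\|_{\mathbb{H}_1}$, which is \eqref{EqvExponencial} and therefore \eqref{Exponential}.

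The only step I foresee requiring any care is verifying \eqref{EImaginario}. Since $\mathbb{B}_1$ generates a contraction semigroup and $0\in\rho(\mathbb{B}_1)$, the resolvent set is open around $0$; if one supposes the existence of a smallest $|\lambda^*|>0$ with $i\lambda^*\in\sigma(\mathbb{B}_1)$, the uniform bound just obtained on $\{|\lambda|>\delta\}$ for any fixed $\delta>0$ contradicts the Neumann-series extension of the resolvent from interior points of $\rho(\mathbb{B}_1)$ to $i\lambda^*$. This is the classical argument used in Theorem~1.3.2 of Liu--Zheng, and is essentially the only conceptual ingredient beyond the preceding lemmas; the resolvent estimate itself is a bookkeeping exercise once those lemmas are assembled.
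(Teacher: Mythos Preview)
Your proposal is correct and follows essentially the same approach as the paper: both apply the Gearhart--Pr\"uss criterion (Theorem~\ref{LiuZExponential}), assemble the dissipation estimate \eqref{dis-10} with Lemmas~\ref{Lemma3} and~\ref{Lemma2} to obtain \eqref{EqvExponencial}, and then use a contradiction argument (the paper via compact resolvent and an approximate-eigenvector sequence, you via Neumann-series extension) to conclude $i\mathbb{R}\subset\rho(\mathbb{B}_1)$. Your write-up is in fact slightly more explicit about the $\varepsilon$-absorption step than the paper's own proof.
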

\begin{proof} 
Let's first check the condition \eqref{EqvExponencial}, which implies \eqref{Exponential}. Using  the Lemmas \ref{Lemma3},   \ref{Lemma2}  and applying in the sequence the estimates of \eqref{dis-10}, we arrive at:
\begin{equation}\label{Eq012Exponential}
\|U\|_\mathbb{H}^2\leq C_\delta \|F\|_{\mathbb{H}_1}\|U\|_{\mathbb{H}_1}\quad{\rm for}\quad 0\leq\tau_1,\tau_2,\tau_3\leq 1.
\end{equation}
Therefore the condition \eqref{Exponential} for $(\tau_1,\tau_2,\tau_3)\in [0,1]^3$ of Theorem \ref{LiuZExponential} is verified.  Next, we show the condition \eqref{EImaginario}.
\begin{lemma}\label{EImaginary}
\label{iR}
Let $\varrho(\mathbb{B}_1)$ be the resolvent set of operator
$\mathbb{B}_1$. Then
\begin{equation}
i\hspace{0.5pt}\mathbb{R}\subset\varrho(\mathbb{B}_1).
\end{equation}
\end{lemma}
\begin{proof}
Since $\mathbb{B}_1$   is the infinitesimal generator of a $C_0-$semigroup of contractions $S_1(t)$, $ t\geq 0$,  from Theorem \ref{THY},  $\mathbb{B}_1$ is a closed operator and $\mathfrak{D}(\mathbb{B}_1)$ has compact embedding into the energy space $\mathbb{H}_1$, the spectrum $\sigma(\mathbb{B}_1)$ contains only eigenvalues.
 Let us prove that $i\R\subset\rho(\mathbb{B}_1)$ by using an argument by contradiction, so we suppose that $i\R\not\subset \rho(\mathbb{B}_1)$. 
 As $0\in\rho(\mathbb{B}_1)$ and $\rho(\mathbb{B}_1)$ is open, we consider the highest positive number $\lambda_0$ such that the $(-i\lambda_0,i\lambda_0)\subset\rho(\mathbb{B}_1)$ then $i\lambda_0$ or $-i\lambda_0$ is an element of the spectrum $\sigma(\mathbb{B}_1)$.   
 We suppose $i\lambda_0\in \sigma(\mathbb{B}_1)$ (if $-i\lambda_0\in \sigma(\mathbb{B}_1)$ the proceeding is similar). Then, for $0<\delta<\lambda_0$ there exist a sequence of real numbers $(\lambda_n)$, with $\delta\leq\lambda_n<\lambda_0$, $\lambda_n\con \lambda_0$, and a vector sequence  $U_n=(u_n,v_n,w_n, \theta_n)\in \mathfrak{D}(\mathbb{B}_1)$ with  unitary norms, such that
\begin{equation*}
\|(i\lambda_nI-\mathbb{B}_1) U_n\|_{\mathbb{H}_1}=\|F_n\|_{\mathbb{H}_1}\con 0,
\end{equation*}
as $n\con \infty$.   From  estimative \eqref{Eq012Exponential},   we have 
\begin{multline}
\|U_n\|^2_{\mathbb{H}_1} =\rho_1\|u\|^2+\rho_2\|v\|^2+\rho_3\|w\|^2+\rho_4\|s\|^2+b_1\|A^\frac{1}{2}\psi\|^2+b_2\|A^\frac{1}{2}z\|^2
\\  +\kappa_1\|\varphi_x-\psi\|^2 +\kappa_2\|y_x-z\|^2+\jmath\|y-\varphi\|^2+\dfrac{\rho_5\delta}{\beta}\|A^\frac{1}{2}\theta\|^2\\
\leq C_\delta\|F_n\|_{\mathbb{H}_1}\|U_n\|_{\mathbb{H}_1}=C_\delta\|F_n\|_{\mathbb{H}_1}\con 0.
\end{multline}
Therefore, we have  $\|U_n\|_{\mathbb{H}_1}\con 0$ but this is an absurd, since $\|U_n\|_{\mathbb{H}_1}=1$ for all $n\in\N$. Thus, $i\R\subset \rho(\mathbb{B}_1)$. This completes the proof of this lemma. 
\end{proof}
Therefore the semigroup $S_1(t)=e^{t\mathbb{B}_1}$ is exponentially stable  for $(\tau_1,\tau_2,\tau_3)\in [0,1]^3$,  thus we finish the proof of this Theorem \ref{TDExponential}. 
\end{proof}

\section{System 02}
In this section we present results of asymptotic behavior (Exponential Decay) and regularity (Determination of Gevrey Classes and Analyticity) of the second system of this research.
\subsection{Well-posedness of the System 02}
Now, using  the operator $A:=-\partial_{xx}$ the system \eqref{Eq2.1}-\eqref{Eq2.12} can be written in the following  setting
\begin{eqnarray}
\label{Eq2.13}
\rho_1 \varphi_{tt}+\kappa_1A\varphi+\kappa_1\psi_x-\jmath(y-\varphi)+\gamma_1A^{\beta_1}\varphi_t=0\quad{\rm in} \quad (0,l)\times (0,\infty),\\
\label{Eq2.14}
\rho_2\psi_{tt}+b_1A\psi-\kappa_1(\varphi_x-\psi)-\delta A\theta=0\quad{\rm in} \quad (0,l)\times (0,\infty),\\
\label{Eq2.15}
\rho_3y_{tt}+\kappa_2 Ay+\kappa_2 z_x+\jmath(y-\varphi)+\gamma_2A^{\beta_2}y_t=0\quad{\rm in} \quad (0,l)\times (0,\infty),\\
\label{Eq2.16}
\rho_4z_{tt}+b_2Az-\kappa_2(y_x-z)+\gamma_3 A^{\beta_3}z_t=0\quad{\rm in} \quad (0,l)\times (0,\infty),\\
\label{Eq2.17}
\rho_5\theta_t+KA\theta+\delta A\psi_t=0\quad{\rm in} \quad (0,l)\times (0,\infty).
\end{eqnarray}
with the initial conditions; \eqref{Eq2.10}--\eqref{Eq2.12}.

Taking the duality product between equation \eqref{Eq2.13} and  $\varphi_t$, \eqref{Eq2.14} with $\psi_t$, \eqref{Eq2.15} with $y_t$, \eqref{Eq2.16} with $z_t$   and \eqref{Eq2.17} with $\theta$,   taking advantage of the self-adjointness of the powers of the operator $A$,  with boundary condition $z(0,t)=z(l,t)=0$, we have  $\|z_x\|^2=\dual{z_x}{z_x}=\int_0^lz_x\overline{z_x}dx=\int Az\overline{z}dx+z_x\overline{z}|_0^l=\dual{Az}{z}=\|A^\frac{1}{2}z\|^2$, similarly we have $\|\psi_x\|^2=\|A^\frac{1}{2}\psi\|^2$ and $\|\theta_x\|^2=\|A^\frac{1}{2}\theta\|^2$.   For every solution of the system \eqref{Eq1.7}-\eqref{Eq1.17}  the total energy $\mathfrak{E}_2\colon \mathbb{R}^+\to\mathbb{R}^+$ is given by    
\begin{multline}\label{Energia02}
\mathfrak{E}_2(t)=\frac{1}{2}\bigg[ \rho_1\|\varphi_t\|^2+\rho_2\|\psi_t\|^2+\rho_3\|y_t\|^2+\rho_4\|z_t\|^2+b_1\|A^\frac{1}{2}\psi\|^2+b_2\|A^\frac{1}{2}z\|^2
\\  +\kappa_1\|\varphi_x-\psi\|^2 +\kappa_2\|y_x-z\|^2+\jmath\|y-\varphi\|^2+\rho_5\|\theta\|^2 \bigg ],
\end{multline}
and satisfies
\begin{equation}\label{Dissipa02}
\dfrac{d}{dt}\mathfrak{E}_2(t)=-\gamma_1\|A^\frac{\beta_1}{2}\varphi_t\|^2-\gamma_2\|A^\frac{\beta_2}{2}y_t\|^2-\gamma_3\|A^\frac{\beta_3}{2}z_t\|^2- K\|A^\frac{1}{2}\theta\|^2.
\end{equation}
Taking $\varphi_t=u$, $\psi_t=v$, $y_t=s$ and $z_t=w$,   
 the initial boundary value problem \eqref{Eq2.13}-\eqref{Eq2.17} can be reduced to the following abstract initial value problem for a first-order evolution equation
 \begin{equation}\label{Fabstrata2}
    \frac{d}{dt}U(t)=\mathbb{B}_2 U(t),\quad    U(0)=U_0,
\end{equation}

 where $U(t)=(\varphi,u,\psi, v,y, s,z, w, \theta)^T$,  $U_0=(\varphi_0,\varphi_1,\psi_0,\psi_1, y_0,y_1,z_0,z_1,\theta_0)^T$  and   the operator $\mathbb{B}_2\colon \mathfrak{D}(\mathbb{B}_2)\subset \mathbb{H}_2\to\mathbb{H}_2$ is given by
\begin{equation}\label{operadorAgamma2}
 \mathbb{B}_2U:=\left(\begin{array}{c}
 u\\
 -\dfrac{\kappa_1}{\rho_1}A\varphi-\dfrac{\kappa_1}{\rho_1}\psi_x+\dfrac{\jmath}{\rho_1}(y-\varphi)-\dfrac{\gamma_1}{\rho_1}A^{\beta_1}u\\
 v\\
-\dfrac{b_1}{\rho_2}A\psi+\dfrac{\kappa_1}{\rho_2}(\varphi_x-\psi)+\dfrac{\delta}{\rho_2}A\theta\\
s\\
-\dfrac{\kappa_2}{\rho_3}Ay-\dfrac{\kappa_2}{\rho_3}z_x-\dfrac{\jmath}{\rho_3}(y-\varphi)-\dfrac{\gamma_2}{\rho_3}A^{\beta_2}s\\
w\\
-\dfrac{b_2}{\rho_4}Az+\dfrac{\kappa_2}{\rho_4}(y_x-z)-\dfrac{\gamma_3}{\rho_4}A^{\beta_3}w\\
-\dfrac{K}{\rho_5}A\theta-\dfrac{\delta}{\rho_5}Av
 \end{array}\right).
\end{equation}
This operator will be defined in a suitable subspace of the phase space
$$
\mathbb{H}_2:=[\mathfrak{D}(A^\frac{1}{2})\times\mathfrak{D}(A^0)]^4\times\mathfrak{D}(A^0).
$$
It is a Hilbert space with the inner product
\begin{eqnarray*}
\dual{ U_1}{U_2}_{\mathbb{H}_2} & := & \rho_1\dual{u_1}{u_2}+\rho_2\dual{v_1}{v_1}+\rho_3\dual{s_1}{s_2}+\rho_4\dual{w_1}{w_2}+b_1\dual{\psi_{1,x}}{\psi_{2,x}}\\
&  & +b_2\dual{z_{1,x}}{z_{2,x}}+\kappa_1\dual{\varphi_{1,x}-\psi_1}{\varphi_{2,x}-\psi_2} + \kappa_2\dual{y_{1,x}-z_1}{y_{2,x}-z_2}\\
& & +\jmath\dual{y_1-\varphi_1}{y_2-\varphi_2}+\rho_5\delta\dual{\theta_1}{\theta_2}.
\end{eqnarray*}
For $U_i=(\varphi_i,u_i,\psi_i, v_i,y_i, s_i,z_i,w_i,  \theta_i)^T\in \mathbb{H}_2$,  $i=1,2$  and induced norm
\begin{multline}\label{NORM2}
\|U\|^2_{\mathbb{H}_2}:=\rho_1\|u\|^2+\rho_2\|v\|^2+\rho_3\|w\|^2+\rho_4\|s\|^2+b_1\|A^\frac{1}{2}\psi\|^2+b_2\|A^\frac{1}{2}z\|^2
\\  +\kappa_1\|\varphi_x-\psi\|^2 +\kappa_2\|y_x-z\|^2+\jmath\|y-\varphi\|^2+\rho_5\|\theta\|^2.
\end{multline}
In these conditions, we define the domain of $\mathbb{B}_2$ as
\begin{multline}\label{dominioB2}
    \mathfrak{D}(\mathbb{B}_2):= \Big \{ U\in \mathbb{H}_2 \colon  (u,v,s,w)\in  [\mathfrak{D}(A^\frac{1}{2})]^4,\theta \in \mathfrak{D}(A)\quad {\rm and}\\
     (\varphi,\psi,y,z) \in (\mathfrak{D}(A)\cap\mathfrak{D}(A^{\beta_1}))\times \mathfrak{D}(A)\times (\mathfrak{D}(A)\cap\mathfrak{D}(A^{\beta_2}))\times (\mathfrak{D}(A)\cap\mathfrak{D}(A^{\beta_3}))
       \Big\}.
\end{multline}

To show that the operator $\mathbb{B}_2$ is the generator of a $C_0$-semigroup we invoke a result from Liu-Zheng' \cite{LiuZ}. Theorem \ref{TLiuZ}:
Clearly, we see that $\mathfrak{D}(\mathbb{B}_2)$ is dense in $\mathbb{H}_2$. And  it is easy to see that  $\mathbb{B}_2$ is dissipative. In fact, for each $U=(\varphi,u,\psi, v,y, s,z, w, \theta)^T\in \mathfrak{D}(\mathbb{B}_2)$ we have
\begin{equation}\label{disipative2}
{\rm Re}\dual{\mathbb{B}_2U}{U}_{\mathbb{H}_2}=-\gamma_1\|A^\frac{\beta_1}{2}u\|^2-\gamma_2\|A^\frac{\beta_2}{2}s\|^2-\gamma_3\|A^\frac{\beta_3}{2}w\|^2- K\|A^\frac{1}{2}\theta\|^2\leq 0.
\end{equation}
Therefore, it is enough to show that  $0\in \rho(\mathbb{B}_2)$ (resolvent set of $\mathbb{B}_2$), hence we must show that $(0I-\mathbb{B}_2)^{-1}$ exists and is bounded in $\mathbb{H}_2$. 

To do that, let us take $F=(f^1, f^2, f^3,f^4,f^5,f^6,f^7, f^8, f^9)^T\in \mathbb{H}_2$, and look a unique $U=(\varphi,u,\psi, v,y, s,z, w, \theta)^T\in \mathfrak{D}(\mathbb{B}_2)$,  such that
\begin{equation}\label{39Ramos}
-\mathbb{B}_2U=F, \qquad {\rm in}\qquad \mathbb{H}_2.
\end{equation}
Equivalently, we get $-u=f^1, -v=f^3, -s=f^5, -w=f^7$, $A\theta=\frac{\delta}{K}Af^3+\frac{\rho_5}{K}f^9$ and the followings equations
\begin{eqnarray}
\label{40Ramos}
\kappa_1 A\varphi+\kappa_1\psi_x-\jmath (y-\varphi) & = &\gamma_1 A^{\beta_1}f^1+\rho_1 f^2,\quad \text{in}\quad D(A^\frac{1}{2})\\
\label{41Ramos}
b_1A\psi-\kappa_1(\varphi_x-\psi) & = & \dfrac{\delta^2}{K}Af^3+\rho_2 f^4+ \dfrac{\delta\rho_5}{K} f^9,\quad \text{in}\quad D(A^\frac{1}{2})\\
\label{42Ramos}
\kappa_2 Ay+\kappa_2 z_x+\jmath (y-\varphi) & = & \gamma_2 A^{\beta_2}f^5+\rho_3f^6,\quad \text{in}\quad D(A^\frac{1}{2})
\\
\label{43Ramos}
b_2 Az-\kappa_2 (y_x-z) & = & \gamma_3 A^{\beta_3}f^7+\rho_4 f^8, \quad \text{in}\quad D(A^\frac{1}{2}).
\end{eqnarray}

Perform the duality product of  \eqref{40Ramos}--\eqref{43Ramos} with
 $\varphi^*,\psi^*,y^*$ and $z^*$ respectively, and adding,  and using identities   $\dual{A\varphi}{\varphi^*}=\dual{\varphi_x}{\varphi_x^*}$, $\dual{A\psi}{\psi^*}=\dual{\psi_x}{\psi_x^*}$,$\dual{Az}{z^*}=\dual{z_x}{z_x^*}$ and $\dual{Ay}{y^*}=\dual{y_x}{y_x^*}$,   we obtain the equivalent variational problem:
 \begin{equation}\label{44Ramos}
 \mathfrak{B}((\varphi,\psi,y,z),(\varphi^*,\psi^*,y^*, z^*)) =\mathfrak{L}(\varphi^*,\psi^*, y^*,z^*),
 \end{equation}
 where $\mathfrak{B}(\cdot,\cdot)$ is the sesquilinear form in $[D(A^\frac{1}{2})]^4$, given by
\begin{eqnarray}
\nonumber
\mathfrak{B}((\varphi,\psi,y,z),(\varphi^*,\psi^*,y^*, z^*)) & = & \kappa_1\dual{\varphi_x}{\varphi_x^*}-\kappa_1\dual{\psi}{\varphi_x^*}+b_1\dual{\psi_x}{\psi_x^*}-\kappa_1\dual{\varphi_x-\psi}{\psi^*} 
\\
\nonumber
& & +\kappa_2\dual{y_x}{y_x^*}-\kappa_2\dual{z}{y_x^*}-\kappa_2\dual{y_x-z}{z^*}\\
\nonumber
& & 
-\jmath\dual{y-\varphi}{\varphi^*}+\jmath\dual{y-\varphi}{y^*}+b_2\dual{z_ x}{z_x^*}\\
\nonumber
&=&\kappa_1\dual{\varphi_x-\psi}{\varphi_x^*-\psi^*}+\kappa_2\dual{y_x-z}{y_x^*-z^*}+b_1\dual{\psi_x}{\psi_x^*}\\
\label{45Ramos}
& & +b_2\dual{z_x}{z_x^*}+\jmath\dual{y-\varphi}{y^*-\varphi^*}
\end{eqnarray}
and $\mathfrak{L}(\cdot,\cdot,\cdot,\cdot)$ is a continuous linear form in $[D(A^\frac{1}{2})]^4$, given by
\begin{eqnarray}
\nonumber
\mathfrak{L}(\varphi^*,\psi^*, y^*,z^*) &= & \gamma_1\dual{A^{\beta_1}f^1}{\varphi^*}+\rho_1\dual{f^2}{\varphi^*}+\dfrac{\delta^2}{K}\dual{Af^3}{\psi^*}+\rho_2\dual{f^4}{\psi^*}+\dfrac{\delta\rho_5}{K}\dual{f^9}{\psi^*}\\
\label{46Ramos}
& & +\gamma_2\dual{A^{\beta_2}f^5}{y^*}+\rho_3\dual{f^6}{y^*}+\gamma_3\dual{A^{\beta_3}f^7}{z^*}+\rho_4\dual{f^8}{z^*}.
\end{eqnarray}
Since
\begin{equation*}
\mathfrak{B}((\varphi,\psi,y,z),(\varphi,\psi,y, z))=\kappa_1\|\varphi_x-\psi\|^2+\kappa_2\|y_x-z\|^2+b_1\|\psi_x\|^2+b_2\|z_x\|^2+\jmath\|y-\varphi\|^2,
\end{equation*}
the sesquilinear  form $\mathfrak{B}(\cdot,\cdot)$ is strongly coercive on $[D(A^\frac{1}{2})]^4$, and since \eqref{46Ramos} defines a continuous linear functional of $(\varphi^*,\psi^*, y^*, z^*)$, by Lax--Milgram's Theorem, problem \eqref{44Ramos} admits a unique solution $(\varphi,\psi,y,z)\in [D(A^\frac{1}{2})]^4$. By taking test functions in the form; $(\overline{\varphi},0,0,0), (0,\overline{\psi},0,0), (0,0,\overline{y},0)$ and $(0,0,0,\overline{z})$ with $\overline{\varphi},\overline{\psi},\overline{y}, \overline{z}\in \mathcal{D}(0,l)$ (espace of test functions), it is easy to see, that $(\varphi,\psi,y,z)$ satisfies equations \eqref{40Ramos}--\eqref{43Ramos} in the distributional sense. This also shows that $(\varphi,\psi,y,z)\in (\mathfrak{D}(A)\cap\mathfrak{D}(A^{\beta_1}))\times \mathfrak{D}(A)\times (\mathfrak{D}(A)\cap\mathfrak{D}(A^{\beta_2}))\times (\mathfrak{D}(A)\cap\mathfrak{D}(A^{\beta_3}))$ for all  $(\beta_1,\beta_2,\beta_3)\in [0,1]^3$, because
\begin{eqnarray}
\label{47Ramos}
\kappa_1 A\varphi& = &-\kappa_1\psi_x+\jmath (y-\varphi) +\gamma_1 A^{\beta_1}f^1+\rho_1 f^2,\\
\label{48Ramos}
b_1A\psi& = &\kappa_1(\varphi_x-\psi)+ \dfrac{\delta^2}{K}Af^3+\rho_2 f^4+ \dfrac{\delta\rho_5}{K} f^9,\\
\label{49Ramos}
\kappa_2 Ay& = & -\kappa_2 z_x-\jmath (y-\varphi) \gamma_2 A^{\beta_2}f^5+\rho_3f^6,
\\
\label{50Ramos}
b_2 Az& = &\kappa_2 (y_x-z) + \gamma_3 A^{\beta_3}f^7+\rho_4 f^8.
\end{eqnarray}
Since $-u=f^1\in D(A^\frac{1}{2} , -v=f^3\in D(A^\frac{1}{2}, -s=f^5\in D(A^\frac{1}{2}), -w=f^7\in D(A^\frac{1}{2}$, $A\theta=\frac{\delta}{K}Af^3+\frac{\rho_5}{K}f^9\in D(A^\frac{1}{2})$  we have proved that $(\varphi,u,\psi,v,y,s,z,w,\theta)^T$    belongs to $\mathfrak{D}(\mathbb{B}_2)$ and is a solutions of $-\mathbb{B}_2U=F$ and it is not difficult to prove that $\mathbb{B}_2^{-1}$ is a bounded operator $(\|U\|^2_{\mathbb{H}_2}=\|\mathbb{B}_2^{-1}F\|^2_{\mathbb{H}_2}\leq C\|F\|^2_{\mathbb{H}_2}$). Therefore, we conclude that $0\in \rho(\mathbb{B}_2)$, 
 and this finish the proof of this Theorem \ref{TLiuZ}.

\subsection{Exponential Decay of System 02,  for $(\beta_1,\beta_2,\beta_3)\in [0,1]^3$}
\label{3.2}
In this section, we will study the asymptotic behavior of the semigroup  $S_2(t)=e^{t\mathbb{B}_2}$ of the system \eqref{Eq2.13}-\eqref{Eq2.17}. 
\begin{remark}\label{EqvExponential2}
 Note that to show the condition \eqref{Exponential} it is enough to show that: Let $\delta>0$. There exists a constant $C_\delta>0$ such that the solutions of the system \eqref{Eq2.13}-\eqref{Eq2.17} for $|\lambda|>\delta$,  satisfy the inequality 
 \begin{equation}\label{EqvExponencial2}
 \|U\|_{\mathbb{H}_2}\leq C_\delta\|F\|_{\mathbb{H}_2}\qquad {\rm for}\quad 0\leq\beta_1,\beta_2,\beta_3\leq 1.
 \end{equation}
 \end{remark}
In order to use Theorem \ref{LiuZExponential},  we will try to obtain some estimates  for:
 $$U=(\varphi,u,\psi, v,y, s,z,w, \theta)^T\in \mathfrak{D}(\mathbb{B}_2)\;{\rm and}\; F=(f^1, f^2, f^3, f^4, f^5, f^6, f^7, f^8, f^9)^T\in \mathbb{H}_2,$$  such that $(i\lambda I-\mathbb{B}_2)U=F$, where $\lambda\in \R$. This system, written in components,  reads
\begin{eqnarray}
i\lambda \varphi-u &=& f^1\quad {\rm in}\quad \mathfrak{D}(A^\frac{1}{2})\label{Pesp2-10}\\
i\lambda u+\dfrac{\kappa_1}{\rho_1}A\varphi+\dfrac{\kappa_1}{\rho_1}\psi_x-\dfrac{\jmath}{\rho_1}(y-\varphi)+\dfrac{\gamma_1}{\rho_1}A^{\beta_1}u &=& f^2\quad {\rm in}\quad \mathfrak{D}(A^0)\label{Pesp2-20}\\
i\lambda\psi-v &= & f^3\quad {\rm in}\quad \mathfrak{D}(A^\frac{1}{2})\label{Pesp2-30}\\
i\lambda v+\dfrac{b_1}{\rho_2}A\psi-\dfrac{\kappa_1}{\rho_2}(\varphi_x-\psi)-\dfrac{\delta}{\rho_2}A\theta &=& f^4\quad {\rm in}\quad \mathfrak{D}(A^0) \label{Pesp2-40}\\
i\lambda y-s &= &  f^5\quad {\rm in}\quad \mathfrak{D}(A^\frac{1}{2})\label{Pesp2-50}\\
i\lambda s+\dfrac{\kappa_2}{\rho_3}Ay+\dfrac{\kappa_2}{\rho_3}z_x+\dfrac{\jmath}{\rho_3}(y-\varphi)+\dfrac{\gamma_2}{\rho_3}A^{\beta_2} s &=& f^6\quad {\rm in}\quad \mathfrak{D}(A^0) \label{Pesp2-60}\\
i\lambda z-w &= & f^7\quad {\rm in}\quad \mathfrak{D}(A^\frac{1}{2})\label{Pesp2-70}\\
i\lambda w+\dfrac{b_2}{\rho_4}Az-\dfrac{\kappa_2}{\rho_4}(y_x-z)+\dfrac{\gamma_3}{\rho_4}A^{\beta_3}w &= & f^8\quad {\rm in}\quad \mathfrak{D}(A^0) \label{Pesp2-80}\\
i\lambda\theta+\dfrac{K}{\rho_5} A\theta+\dfrac{\delta}{\rho_5} Av &=& f^9 \quad {\rm in}\quad \mathfrak{D}(A^0). \label{Pesp2-90}
\end{eqnarray}
From \eqref{disipative2},   we have the first estimate
\begin{multline*}
|\gamma_1\|A^\frac{\beta_1}{2}u\|^2+\gamma_2\|A^\frac{\beta_2}{2}s\|^2+\gamma_3\|A^\frac{\beta_3}{2}w\|^2+ K\|A^\frac{1}{2}\theta\|^2|\\
 = |-{\rm Re}\dual{\mathbb{B}_2U}{U}|=|{\rm Re}\{ \dual{i\lambda U-F}{U}  \}|\\
\leq  |\dual{F}{U}|\leq \|F\|_{\mathbb{H}_2}\|\|U\|_{\mathbb{H}_2}.
\end{multline*}
Therefore
\begin{equation}\label{dis2-10}
\gamma_1\|A^\frac{\beta_1}{2}u\|^2+\gamma_2\|A^\frac{\beta_2}{2}s\|^2+\gamma_3\|A^\frac{\beta_3}{2}w\|^2+K\|A^\frac{1}{2}\theta\|^2 \leq  \|F\|_{\mathbb{H}_2}\|\|U\|_{\mathbb{H}_2}.
\end{equation}
Next, we show some lemmas that will lead us to the proof of the main theorem of this section.\\
\begin{lemma}\label{Lemma32}
Let $\delta>0$.  There exists $C_\delta>0$ such that the solutions of the system \eqref{Eq2.13}-\eqref{Eq2.17}  for  $(\beta_1,\beta_2,\beta_3)\in[0,1]^3$,  for $\varepsilon>0$, exists $C_\varepsilon>0$ independent of $\lambda$, such that
\begin{eqnarray}\label{Item01Lemma32}
\|v\|^2 & \leq &C_\varepsilon \|F\|_{\mathbb{H}_2}\|U\|_{\mathbb{H}_2}+\varepsilon\|U\|^2_{\mathbb{H}_2}.
\end{eqnarray}
\end{lemma}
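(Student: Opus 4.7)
The proof should follow the pattern of Lemma \ref{Lemma3} for System 01, with the essential new feature coming from the fact that the temperature–rotation coupling in System 02 is given by $\delta A\psi_t = \delta A v$ (a second-order spatial derivative) rather than the zeroth-order $\beta\psi_t$. This is exactly the ``strong coupling'' the authors emphasised in the introduction, and it manifests here by producing the higher-order quantity $\|A^{1/2}v\|^2$ when the heat equation is tested against $v$. The plan is then to extract this stronger quantity and recover $\|v\|^2$ via Poincar\'e.

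Concretely, I would take the duality product of \eqref{Pesp2-90} against $v$. Since both $\theta$ and $v$ belong to $\mathfrak{D}(A^{1/2})$ and $A$ is self-adjoint, one has $\dual{Av}{v}=\|A^{1/2}v\|^2$ and $\dual{A\theta}{v}=\dual{A^{1/2}\theta}{A^{1/2}v}$, giving
\begin{equation*}
i\lambda\dual{\theta}{v}+\tfrac{K}{\rho_5}\dual{A^{1/2}\theta}{A^{1/2}v}+\tfrac{\delta}{\rho_5}\|A^{1/2}v\|^2=\dual{f^9}{v}.
\end{equation*}
The next step is to rewrite $i\lambda\dual{\theta}{v}=-\dual{\theta}{i\lambda v}$ (using $\lambda\in\R$) and to substitute $i\lambda v$ from \eqref{Pesp2-40}, namely
\begin{equation*}
i\lambda v=-\tfrac{b_1}{\rho_2}A\psi+\tfrac{\kappa_1}{\rho_2}(\varphi_x-\psi)+\tfrac{\delta}{\rho_2}A\theta+f^4,
\end{equation*}
which produces an identity for $\|A^{1/2}v\|^2$ whose right-hand side is a finite sum of inner products involving $\theta$, $A^{1/2}\psi$, $\varphi_x-\psi$, $A^{1/2}v$, and data $f^4,f^9$.

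The key observation is that every term containing $\theta$ (including the diagonal contribution $\tfrac{\delta}{\rho_2}\|A^{1/2}\theta\|^2$ that comes from $\dual{\theta}{A\theta}$) is controlled by the dissipation estimate \eqref{dis2-10}, which already provides $\|A^{1/2}\theta\|^2\leq\tfrac{1}{K}\|F\|_{\mathbb{H}_2}\|U\|_{\mathbb{H}_2}$; the continuous embedding $\mathfrak{D}(A^{1/2})\hookrightarrow\mathfrak{D}(A^0)$ then also controls $\|\theta\|^2$. By Cauchy--Schwarz and Young's inequality, each cross-term splits into $\varepsilon$-multiples of quantities already present in $\|U\|_{\mathbb{H}_2}^2$ (such as $\|A^{1/2}\psi\|^2$, $\|\varphi_x-\psi\|^2$, $\|v\|^2$) plus $C_\varepsilon$-multiples of $\|F\|_{\mathbb{H}_2}\|U\|_{\mathbb{H}_2}$. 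The only term that needs to be absorbed back on the left is $\tfrac{K}{\rho_5}\dual{A^{1/2}\theta}{A^{1/2}v}$, which produces an $\varepsilon\|A^{1/2}v\|^2$ contribution; choosing $\varepsilon$ sufficiently small yields
\begin{equation*}
\|A^{1/2}v\|^2\leq C_\varepsilon\|F\|_{\mathbb{H}_2}\|U\|_{\mathbb{H}_2}+\varepsilon\|U\|_{\mathbb{H}_2}^2.
\end{equation*}

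Finally, since $v\in\mathfrak{D}(A^{1/2})=H_0^1(0,l)$, the Poincar\'e-type inequality $\|v\|^2\leq C\|A^{1/2}v\|^2$ (equivalently, the continuous embedding $\mathfrak{D}(A^{1/2})\hookrightarrow\mathfrak{D}(A^0)$ noted in the Remark) yields the desired bound \eqref{Item01Lemma32}. The main obstacle I foresee is purely book-keeping: keeping track of which cross-terms require the self-absorption of $\varepsilon\|A^{1/2}v\|^2$ and ensuring that the only ``higher-order'' quantity appearing on the right (namely $\|A^{1/2}v\|^2$ itself) is actually absorbable. Crucially, no factor of $|\lambda|$ appears anywhere, so the estimate holds uniformly in $\lambda$ (hence the constant $C_\varepsilon$ is independent of $\lambda$), and the bound holds uniformly in $(\beta_1,\beta_2,\beta_3)\in[0,1]^3$ because these parameters never enter the argument.
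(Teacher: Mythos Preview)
Your argument is correct, but it differs from the paper's proof of this particular lemma. The paper tests \eqref{Pesp2-90} against $A^{-1}v$ rather than $v$: since $\dual{Av}{A^{-1}v}=\|v\|^2$, this produces $\|v\|^2$ directly on the left, and after substituting $i\lambda v$ from \eqref{Pesp2-40} the right-hand side only involves lower-order quantities such as $\dual{\theta}{\psi}$, $\dual{A^{-1}\theta}{\varphi_x-\psi}$, $\|\theta\|^2$, $\dual{\theta}{v}$ and data terms, all of which are estimated via \eqref{dis2-10} and the embeddings $\mathfrak{D}(A^{1/2})\hookrightarrow\mathfrak{D}(A^0)\hookrightarrow\mathfrak{D}(A^{-1})$. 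No Poincar\'e step is needed.

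Your route---testing with $v$, obtaining $\|A^{1/2}v\|^2$, absorbing the $\varepsilon\|A^{1/2}v\|^2$ piece from $\dual{A^{1/2}\theta}{A^{1/2}v}$, and then invoking $\|v\|^2\leq C\|A^{1/2}v\|^2$---is in fact exactly the argument the paper gives later as Lemma~\ref{Lemma62}, where the stronger bound $\|A^{1/2}v\|^2\leq C_\varepsilon\|F\|_{\mathbb{H}_2}\|U\|_{\mathbb{H}_2}$ is needed for the analyticity/Gevrey analysis. So what you have written proves more than Lemma~\ref{Lemma32} requires; the paper separates the two because only the weaker $L^2$ bound is needed for exponential decay, while the $H^1$ bound is reserved for the regularity section.
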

\begin{proof} 
Applying  the duality product between \eqref{Pesp2-90} and $A^{-1}v$ and using \eqref{Pesp2-40}, we have
\begin{eqnarray*}
\dfrac{\delta}{\rho_5}\|v\|^2=\dual{A^{-1}\theta}{i\lambda v}-\dfrac{K}{\rho_5}\dual{\theta}{v}+\dual{f^9}{A^{-1}v}\\
=\dual{A^{-1}\theta}{-\dfrac{b_1}{\rho_2}A\psi+\dfrac{\kappa_1}{\rho_2}(\varphi_x-\psi)+\dfrac{\delta}{\rho_2}A\theta-f^4}-\dfrac{K}{\rho_5}\dual{\theta}{v}+\dual{f^9}{A^{-1}v}\\
=-\dfrac{b_1}{\rho_2}\dual{\theta}{\psi}+\dfrac{\kappa_1}{\rho_2}\dual{A^{-1}\theta}{(\varphi_x-\psi)}+\dfrac{\delta}{\rho_2}\|\theta\|^2-\dual{\theta}{f^4}-\dfrac{K}{\rho_5}\dual{\theta}{v}+\dual{f^9}{A^{-1}v}
\end{eqnarray*}
Applying Cauchy-Schwarz inequality, we have
\begin{equation*}
\|v\|^2\leq C\{\|\theta\|\|\psi\|+\|A^{-1}\theta\|\|\varphi_x-\psi\|+\|\theta\|^2+\|\theta\|\|f^4\|+\|f^9\|\|A^{-1}v\|\}.
\end{equation*}
 Finally, applying Young inequality, continuous immersions: $\mathfrak{D}(A^\frac{1}{2})\hookrightarrow \mathfrak{D}(A^0)\hookrightarrow\mathfrak{D}(A^{-1})$, and from  estimative \eqref{dis2-10},   finish proof this lemma.
 
\end{proof}
\begin{lemma}\label{Lemma22}
Let $\delta>0$.  There exists $C_\delta>0$ such that the solutions of the system \eqref{Eq2.13}-\eqref{Eq2.17}  for $|\lambda| > \delta$ and $(\beta_1,\beta_2,\beta_3)\in [0,1]^3$,  satisfy
\begin{eqnarray}\label{Item01Lemma22}
(i)\quad 
|\lambda|\| y-\varphi\|^2  \leq C_\delta\|F\|_{\mathbb{H}_2}\|U\|_{\mathbb{H}_2},\\
\label{Item02Lemma22}
(ii)\quad \kappa_1\|\varphi_x-\psi\|^2+b_1\|A^\frac{1}{2}\psi\|^2\leq \varepsilon\|U\|^2_{\mathbb{H}_2}+C_\delta\|F\|_{\mathbb{H}_2}\|U\|_{\mathbb{H}_2},\\
\label{Item03Lemma22}
(iii)\quad \kappa_2\|y_x-z\|^2+b_2\|A^\frac{1}{2}z\|^2\leq C_\delta\|F\|_{\mathbb{H}_2}\|U\|_{\mathbb{H}_2}.
\end{eqnarray}
\end{lemma}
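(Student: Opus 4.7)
The resolvent system \eqref{Pesp2-10}--\eqref{Pesp2-90} differs from the one used in Lemma \ref{Lemma2} only in the last equation, which is irrelevant for items (i)--(iii); likewise the dissipation identity \eqref{dis2-10} plays exactly the role of \eqref{dis-10} in the first system, giving control of $\|A^{\beta_i/2}u\|$, $\|A^{\beta_i/2}s\|$, $\|A^{\beta_i/2}w\|$, and crucially $\|A^{1/2}\theta\|$. Consequently my plan is to transplant the three arguments of Lemma \ref{Lemma2} verbatim, keeping an eye on the two places where the cross-coupling term or the parameters $\beta_i$ could cause trouble.

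For item (i), I would subtract \eqref{Pesp2-10} from \eqref{Pesp2-50} to obtain $i\lambda(y-\varphi)-(s-u)=f^5-f^1$, pair with $y-\varphi$, and apply Cauchy--Schwarz together with Young's inequality, absorbing the small $\varepsilon\|y-\varphi\|^2$ into the norm of $U$ via the $\jmath\|y-\varphi\|^2$ term that sits inside $\|U\|_{\mathbb{H}_2}^2$. To replace $\|s\|^2+\|u\|^2$ by $\|F\|_{\mathbb{H}_2}\|U\|_{\mathbb{H}_2}$, I would use the continuous embeddings $\mathfrak{D}(A^{\beta_i/2})\hookrightarrow\mathfrak{D}(A^0)$ from the preliminary remark (trivially valid when $\beta_i=0$) and insert \eqref{dis2-10}. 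The factor $|\lambda|>\delta>1$ then closes the estimate.

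For item (ii), I would dualize \eqref{Pesp2-20} against $\rho_1\varphi$ and use \eqref{Pesp2-10} to rewrite $i\lambda\varphi$ in terms of $u$ and $f^1$; separately, dualize \eqref{Pesp2-40} against $\rho_2\psi$ and use \eqref{Pesp2-30}; subtract, so that $\kappa_1\|\varphi_x-\psi\|^2+b_1\|A^{1/2}\psi\|^2$ appears on the left together with well-controlled residuals. The step that I expect to be most delicate is the handling of the coupling $-\delta\langle A\theta,\psi\rangle=-\delta\langle A^{1/2}\theta,A^{1/2}\psi\rangle$, which I would dominate by Young's inequality, absorbing a fraction of $\|A^{1/2}\psi\|^2$ into the left-hand side using the smallness of $\|A^{1/2}\theta\|^2\le C\|F\|_{\mathbb{H}_2}\|U\|_{\mathbb{H}_2}$ from \eqref{dis2-10}. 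The apparently dangerous term $i\lambda\gamma_1\|A^{\beta_1/2}\varphi\|^2$ is purely imaginary and is annihilated by taking real parts, exactly as in Lemma \ref{Lemma2}; the remaining $\jmath\langle y-\varphi,u\rangle$ and $\rho_2\|v\|^2$ pieces are dispatched by item (i) and Lemma \ref{Lemma32} respectively.

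For item (iii), I would run the parallel computation with \eqref{Pesp2-60}$\cdot\rho_3 y$ and \eqref{Pesp2-80}$\cdot\rho_4 z$, combined with \eqref{Pesp2-50} and \eqref{Pesp2-70}. This case is cleaner because no $\theta$-coupling appears, so no $\varepsilon$-absorption into the left-hand side is required; the imaginary $i\lambda$-terms again vanish on taking real parts. The only tangible new move is the reduction of $\jmath\langle y-\varphi,y\rangle$ via \eqref{Pesp2-50} to a combination of $\langle y-\varphi,f^5\rangle/\lambda$ and $\langle y-\varphi,s\rangle/\lambda$, both of which are $O(\|F\|_{\mathbb{H}_2}\|U\|_{\mathbb{H}_2})$ since $|\lambda|>\delta$. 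I expect no additional obstacle beyond routine bookkeeping, and the three items together will feed directly into the exponential decay proof for $S_2(t)$.
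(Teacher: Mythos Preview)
Your proposal is correct and matches the paper's own proof, which simply states that the argument is ``completely similar to the proof of Lemma \ref{Lemma2} of system 1.'' You have in fact been more careful than the paper by explicitly flagging the one genuine adjustment needed: in System~2 the dissipation \eqref{dis2-10} controls only $\|A^{1/2}\theta\|$ rather than $\|A\theta\|$, so the coupling term $\delta\langle A\theta,\psi\rangle$ must be rewritten as $\delta\langle A^{1/2}\theta,A^{1/2}\psi\rangle$ before applying Young's inequality --- exactly as you describe.
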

\begin{proof} 
We omit the proof of this lemma because it is completely similar to the proof of Lemma \ref{Lemma2} of system 1.
\end{proof}
\begin{theorem}\label{TDExponential2}
The semigroup $S_2(t) = e^{t\mathbb{B}_2}$, is exponentially stable as long as the parameters $(\beta_1,\beta_2,\beta_3)\in [0,1]^ 3 $.
\end{theorem}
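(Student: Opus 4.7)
The plan is to follow the same two-step template as in the proof of Theorem \ref{TDExponential}: first verify the high-frequency resolvent estimate \eqref{Exponential} via the reformulation \eqref{EqvExponencial2} of Remark \ref{EqvExponential2}, then verify \eqref{EImaginario} by a contradiction argument analogous to Lemma \ref{EImaginary}. All the component lemmas we need (Lemma \ref{Lemma32} and Lemma \ref{Lemma22}) have already been stated for system 02, so the proof reduces to an assembly of estimates.

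First I would fix $F\in\mathbb{H}_2$ and $U\in\mathfrak{D}(\mathbb{B}_2)$ with $(i\lambda I-\mathbb{B}_2)U=F$ and $|\lambda|>\delta$. The dissipation bound \eqref{dis2-10} immediately controls $\|A^{\beta_1/2}u\|$, $\|A^{\beta_2/2}s\|$, $\|A^{\beta_3/2}w\|$ and $\|A^{1/2}\theta\|$ by $\|F\|_{\mathbb{H}_2}\|U\|_{\mathbb{H}_2}$. Since $A$ is strictly positive with $\sigma(A)\subset[\lambda_1,\infty)$ for some $\lambda_1>0$, the inequality $\|g\|^2\le \lambda_1^{-\beta_i}\|A^{\beta_i/2}g\|^2$ upgrades these to $L^2$ control of $u$, $s$, $w$ and $\theta$ itself, uniformly in $(\beta_1,\beta_2,\beta_3)\in[0,1]^3$. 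Lemma \ref{Lemma32} then estimates $\|v\|^2$, while the three items of Lemma \ref{Lemma22} estimate $\|y-\varphi\|^2$, $\kappa_1\|\varphi_x-\psi\|^2+b_1\|A^{1/2}\psi\|^2$, and $\kappa_2\|y_x-z\|^2+b_2\|A^{1/2}z\|^2$, each up to an $\varepsilon\|U\|^2_{\mathbb{H}_2}$ term and $C_{\delta,\varepsilon}\|F\|_{\mathbb{H}_2}\|U\|_{\mathbb{H}_2}$.

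Summing all of these bounds reconstructs the full norm \eqref{NORM2}, yielding
\begin{equation*}
\|U\|_{\mathbb{H}_2}^2\le \varepsilon\,\|U\|_{\mathbb{H}_2}^2+C_\delta\|F\|_{\mathbb{H}_2}\|U\|_{\mathbb{H}_2}.
\end{equation*}
Choosing $\varepsilon$ small enough to absorb into the left-hand side gives \eqref{EqvExponencial2}, which by Remark \ref{EqvExponential2} is precisely the high-frequency condition \eqref{Exponential} of Theorem \ref{LiuZExponential}.

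For \eqref{EImaginario}, I would repeat verbatim the strategy of Lemma \ref{EImaginary}. From well-posedness we already know $0\in\rho(\mathbb{B}_2)$, and since $\mathfrak{D}(\mathbb{B}_2)\hookrightarrow\mathbb{H}_2$ is compact (the components of $\mathbb{B}_2$ involve powers of the compact resolvent of $A$), the spectrum $\sigma(\mathbb{B}_2)$ consists only of eigenvalues. Assuming for contradiction that $i\mathbb{R}\not\subset\rho(\mathbb{B}_2)$, one extracts a maximal $\lambda_0>0$ with $(-i\lambda_0,i\lambda_0)\subset\rho(\mathbb{B}_2)$ and $i\lambda_0\in\sigma(\mathbb{B}_2)$, together with unit-norm $U_n\in\mathfrak{D}(\mathbb{B}_2)$ and real $\lambda_n\to\lambda_0$ with $|\lambda_n|\ge\delta$ such that $(i\lambda_nI-\mathbb{B}_2)U_n=F_n\to 0$ in $\mathbb{H}_2$. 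The estimate just established then forces $\|U_n\|_{\mathbb{H}_2}\to 0$, contradicting $\|U_n\|_{\mathbb{H}_2}=1$. The main (and only) subtle point is to ensure that the estimates from Lemmas \ref{Lemma32} and \ref{Lemma22} remain uniform over $(\beta_1,\beta_2,\beta_3)\in[0,1]^3$, which is guaranteed because the strict positivity of $A$ makes $\|A^{\beta/2}\cdot\|$ stronger than $\|\cdot\|$ uniformly in $\beta\in[0,1]$; the degenerate endpoint $\beta_i=0$ is not problematic since one then simply reads $\|A^{0/2}u\|=\|u\|$ directly from \eqref{dis2-10}.
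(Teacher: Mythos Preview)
Your proposal is correct and follows essentially the same approach as the paper: you assemble the dissipation estimate \eqref{dis2-10} together with Lemmas \ref{Lemma32} and \ref{Lemma22} to obtain \eqref{Eq012Exponential2}, and then invoke the contradiction argument of Lemma \ref{EImaginary} (restated as Lemma \ref{EImaginary2}) to secure $i\mathbb{R}\subset\rho(\mathbb{B}_2)$. The only difference is that you spell out a few details (the spectral lower bound of $A$ to pass from $\|A^{\beta_i/2}\cdot\|$ to $\|\cdot\|$, and the absorption of the $\varepsilon$-term) that the paper leaves implicit.
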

\begin{proof} 
Let's first check the condition \eqref{EqvExponencial}, which implies \eqref{Exponential}. Using  the Lemmas \ref{Lemma22},  \ref{Lemma32} and and applying in the sequence the estimates of \eqref{dis2-10}, we arrive at:
\begin{equation}\label{Eq012Exponential2}
\|U\|_{\mathbb{H}_2}^2\leq C_\delta \|F\|_{\mathbb{H}_2}\|U\|_{\mathbb{H}_2}\quad{\rm for}\quad 0\leq\beta_1,\beta_2,\beta_3\leq 1.
\end{equation}
Therefore the condition \eqref{Exponential} for $(\beta_1,\beta_2,\beta_3)\in [0,1]^3$ of Theorem \ref{LiuZExponential} is verified.  Next, we will announce a lemma of the condition \eqref{EImaginary}. The demonstration will be omitted, as it is completely similar to the one demonstrated for the first system.
\begin{lemma}\label{EImaginary2}
\label{iR2}
Let $\varrho(\mathbb{B}_2)$ be the resolvent set of operator
$\mathbb{B}_2$. Then
\begin{equation}
i\hspace{0.5pt}\mathbb{R}\subset\varrho(\mathbb{B}_2).
\end{equation}
\end{lemma}
\begin{proof}
The proof is similar to the proof of Lemma \ref{EImaginary}.
\end{proof}
Therefore,  the semigroup $S_2(t)=e^{t\mathbb{B}_2}$ is exponentially stable  for $(\beta_1,\beta_2,\beta_3)\in [0,1]^3$,  thus we finish the proof of this Theorem \ref{TDExponential2}. 
\end{proof}
\begin{theorem}[Lions' Interpolation]\label{Lions-Landau-Kolmogorov}  Let $\alpha<\beta<\gamma$. The there exists a constant $L=L(\alpha,\beta,\gamma)$ such that
\begin{equation}\label{ILLK}
\|A^\beta u\|\leq L\|A^\alpha u\|^\frac{\gamma-\beta}{\gamma-\alpha}\cdot \|A^\gamma u\|^\frac{\beta-\alpha}{\gamma-\alpha}
\end{equation}
for every $u\in\mathfrak{D}(A^\gamma)$.
\end{theorem}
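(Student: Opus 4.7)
\textbf{Proof proposal for Theorem \ref{Lions-Landau-Kolmogorov}.}

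The plan is to reduce the inequality to a pointwise (in spectral variable) statement and then apply Hölder's inequality. Since $A = -\partial_{xx}$ with Dirichlet boundary conditions is strictly positive, self-adjoint and has compact resolvent (as noted in the remark after \eqref{Eq1.17}), there exists an orthonormal basis $\{e_n\}_{n\in\mathbb{N}}$ of $L^2(0,l)$ consisting of eigenvectors of $A$, with eigenvalues $0 < \lambda_1 \leq \lambda_2 \leq \cdots \to \infty$. For any real $s$ and any $u \in \mathfrak{D}(A^s)$, writing $u = \sum_n c_n e_n$ with $c_n = \dual{u}{e_n}$, the functional calculus gives
\begin{equation*}
\|A^s u\|^2 = \sum_{n=1}^{\infty} \lambda_n^{2s}\,|c_n|^2.
\end{equation*}

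The key algebraic observation is that the exponents in the statement satisfy
\begin{equation*}
\beta = \alpha \cdot \tfrac{\gamma-\beta}{\gamma-\alpha} + \gamma \cdot \tfrac{\beta-\alpha}{\gamma-\alpha},
\qquad
\tfrac{\gamma-\beta}{\gamma-\alpha} + \tfrac{\beta-\alpha}{\gamma-\alpha} = 1,
\end{equation*}
so setting $p = \frac{\gamma-\alpha}{\gamma-\beta}$ and $q = \frac{\gamma-\alpha}{\beta-\alpha}$, we have $1/p + 1/q = 1$ and $p,q > 1$ (since $\alpha < \beta < \gamma$). Then I would split
\begin{equation*}
\lambda_n^{2\beta}\,|c_n|^2 = \left(\lambda_n^{2\alpha}\,|c_n|^2\right)^{\frac{\gamma-\beta}{\gamma-\alpha}} \cdot \left(\lambda_n^{2\gamma}\,|c_n|^2\right)^{\frac{\beta-\alpha}{\gamma-\alpha}},
\end{equation*}
and apply Hölder's inequality on $\ell^1$ with exponents $p$ and $q$ to obtain
\begin{equation*}
\|A^\beta u\|^2 \leq \left(\sum_n \lambda_n^{2\alpha}\,|c_n|^2\right)^{\frac{\gamma-\beta}{\gamma-\alpha}} \left(\sum_n \lambda_n^{2\gamma}\,|c_n|^2\right)^{\frac{\beta-\alpha}{\gamma-\alpha}} = \|A^\alpha u\|^{2\frac{\gamma-\beta}{\gamma-\alpha}}\,\|A^\gamma u\|^{2\frac{\beta-\alpha}{\gamma-\alpha}}.
\end{equation*}
Taking square roots gives the conclusion, in fact with $L=1$.

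There is essentially no obstacle in this argument; the only slightly delicate point is to check that when $u \in \mathfrak{D}(A^\gamma)$, the partial sums converge in $\mathfrak{D}(A^\beta)$ and $\mathfrak{D}(A^\alpha)$, which follows from the continuous embedding $\mathfrak{D}(A^{\sigma_1}) \hookrightarrow \mathfrak{D}(A^{\sigma_2})$ for $\sigma_1 > \sigma_2$ recalled in the remark, since $\alpha < \beta < \gamma$ imply $u \in \mathfrak{D}(A^\gamma) \subset \mathfrak{D}(A^\beta) \subset \mathfrak{D}(A^\alpha)$. Alternatively, if one prefers to avoid spectral decomposition, the same inequality can be derived from the three-lines theorem applied to the analytic function $z \mapsto \dual{A^z u}{A^{\overline{z}} v}$ on a vertical strip, but the spectral approach above is the most direct and yields the sharp constant.
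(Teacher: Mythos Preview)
Your proof is correct and complete. The paper itself does not give a proof of this theorem; it simply cites Theorem~5.34 in Engel--Nagel \cite{EN2000}, so your spectral-decomposition plus H\"older argument actually supplies more than the paper does, and recovers the sharp constant $L=1$ for this particular self-adjoint positive operator with compact resolvent.
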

\begin{proof}
See  Theorem  5.34 \cite{EN2000}.
\end{proof}
\subsection{Regularity  of the semigroup $S_2(t)=e^{t\mathbb{B}_2}$}
In this subsection, we will show that the semigroup $S_2(t)$ is analyticity for $(\beta_1,\beta_2,\beta_3)\in [\frac{1}{2},1]^3$ and  determination of Gevrey class for $(\beta_1,\beta_2,\beta_3)\in (0,1)^3$. But before we will show some preliminary lemmas.
\subsubsection{Analyticity: System 02}

The following theorem characterizes the analyticity of $S_2(t)$, see \cite{LiuZ}:
\begin{theorem}[see \cite{LiuZ}]
\label{LiuZAnaliticity}
    Let
$S_2(t)=e^{\mathbb{B}_2t}$ be
$C_0$-semigroup of contractions  
on a Hilbert space.  
Suppose that
    \begin{equation*}
    \rho(\mathbb{B}_2)
    \supseteq\{ i\lambda; \; \lambda\in \R \} 
     \equiv i\R
    \end{equation*}
     Then
$S_2(t)$ is analytic if and only if
    \begin{equation}\label{Analiticity}
     \limsup\limits_{|\lambda|\to
        \infty}
    \|\lambda(i\lambda I-\mathbb{B}_2)^{-1}
    \|_{\mathcal{L}(\mathbb{H}_2)}<\infty
    \end{equation}
    holds.
    
\end{theorem}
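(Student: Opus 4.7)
The statement is a classical characterization of analyticity for contraction $C_0$-semigroups whose generator has purely imaginary spectrum contained in the resolvent set; the standard reference is Liu--Zheng \cite{LiuZ}, and my sketch follows that plan. The necessity $(\Rightarrow)$ is the easy direction. If $S_2(t)$ is bounded analytic, then it extends holomorphically to a sector $\Sigma_\theta=\{t\in\mathbb{C}\setminus\{0\}:|\arg t|<\theta\}$ with a uniform bound. The Hille--Phillips representation
\[
R(\mu,\mathbb{B}_2)=\int_{0}^{\infty}e^{-\mu s}S_2(s)\,ds,\qquad \mathrm{Re}\,\mu>0,
\]
can be deformed onto a ray $\{re^{i\vartheta}:r>0\}$ with $|\vartheta|<\theta$, and a direct estimate of the deformed integral gives $\|R(\mu,\mathbb{B}_2)\|\leq M/|\mu|$ on a sector surrounding the imaginary axis; specializing to $\mu=i\lambda$ yields \eqref{Analiticity}.

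The substantive direction is $(\Leftarrow)$. Assume $M:=\sup_{|\lambda|\geq R_0}|\lambda|\,\|R(i\lambda,\mathbb{B}_2)\|<\infty$. For each $i\lambda_0\in\rho(\mathbb{B}_2)$ with $|\lambda_0|\geq R_0$ and each $\mu\in\mathbb{C}$ with $|\mu-i\lambda_0|<|\lambda_0|/(2M)$, I would exploit the Neumann series
\[
R(\mu,\mathbb{B}_2)=R(i\lambda_0,\mathbb{B}_2)\sum_{n=0}^{\infty}\bigl[(i\lambda_0-\mu)R(i\lambda_0,\mathbb{B}_2)\bigr]^{n},
\]
which converges because $|i\lambda_0-\mu|\cdot\|R(i\lambda_0,\mathbb{B}_2)\|<1/2$. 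This simultaneously gives $\mu\in\rho(\mathbb{B}_2)$ and the bound $\|R(\mu,\mathbb{B}_2)\|\leq 2M/|\lambda_0|$. Letting $\lambda_0$ range over $\{|\lambda_0|\geq R_0\}$ sweeps out a two-sided cone
\[
\mathcal{C}:=\bigl\{\mu\in\mathbb{C}:|\mathrm{Re}\,\mu|<|\mathrm{Im}\,\mu|/(2M),\ |\mathrm{Im}\,\mu|\geq R_0\bigr\}\subset\rho(\mathbb{B}_2),
\]
with uniform bound $\|R(\mu,\mathbb{B}_2)\|\leq C/|\mu|$.

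To close the argument I would glue $\mathcal{C}$ to the bounded strip $\{|\mathrm{Im}\,\mu|<R_0\}$: dissipativity of $\mathbb{B}_2$ (inequality \eqref{disipative2}) ensures that the open right half-plane lies in $\rho(\mathbb{B}_2)$, while the hypothesis $i\mathbb{R}\subset\rho(\mathbb{B}_2)$ together with openness of $\rho(\mathbb{B}_2)$ and compactness of the segment $\{i\lambda:|\lambda|\leq R_0\}$ produces a uniform neighborhood of that segment in the resolvent set with a finite bound. Combining these three pieces yields a sector $\Sigma_{\pi/2+\theta}:=\{\mu\neq 0:|\arg\mu|<\pi/2+\theta\}\subset\rho(\mathbb{B}_2)$, together with the resolvent estimate $\|R(\mu,\mathbb{B}_2)\|\leq C/|\mu|$ for all sufficiently large $|\mu|$ in that sector. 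This is exactly the standard Hille--Yosida-type condition (Theorem II.4.6 of Engel--Nagel, or Theorem 1.3.3 of \cite{LiuZ}) characterizing generators of bounded analytic semigroups, so $S_2(t)$ is analytic.

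The only delicate step is the Neumann-series extension, since one must verify that the radii $|\lambda_0|/(2M)$ of the disks produced around points $i\lambda_0$ really do assemble into a full sector; the rest of the argument is routine bookkeeping. For our intended application in the next section, verifying the hypothesis \eqref{Analiticity} for $\mathbb{B}_2$ when $(\beta_1,\beta_2,\beta_3)\in[1/2,1]^3$ is what will require real work, but the present theorem is used as a black box.
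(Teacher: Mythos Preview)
Your sketch is correct and follows the standard Liu--Zheng argument, but note that the paper does not actually prove this theorem: it is stated with the attribution ``see \cite{LiuZ}'' and used as a black box, exactly as you yourself remark in your final sentence. There is therefore nothing to compare against; your proposal supplies a proof where the paper intentionally omits one.
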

 
\begin{remark}\label{ObsEquivAnaly}  
\rm
To show the \eqref{Analiticity} condition,  
it suffices to show that, given
$\delta>0$ there exists a constant
$C_\delta > 0$ such that the solutions of 
\eqref{Pesp2-10}--\eqref{Pesp2-90}, 
for
$|\lambda|>\delta$ satisfy the inequality
\begin{equation}
\label{EquivAnaliticity}
 \|\lambda(i\lambda I-\mathbb{B}_2)^{-1}
F\|_{\mathbb{H}_2}^2\leq 
C_\delta\|F\|_{\mathbb{H}_2}\|U\|_{\mathbb{H}_2}\qquad \Longleftrightarrow \qquad 
|\lambda|\|U\|^2_{\mathbb{H}_2}\leq 
C_\delta\|F\|_{\mathbb{H}_2}\|U\|_{\mathbb{H}_2}.
\end{equation}

\end{remark}
\begin{lemma}\label{Lemma62}
Let $\varepsilon>0$.  There exists $C_\varepsilon>0$ such that the solutions of the system \eqref{Eq2.13}-\eqref{Eq2.17},  satisfy
\begin{eqnarray}\label{EcA12v}
\|A^\frac{1}{2}v\|^2\leq C_\varepsilon\|F\|_{\mathbb{H}_2}\|U\|_{\mathbb{H}_2}.
\end{eqnarray}
\end{lemma}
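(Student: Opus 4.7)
The plan is to exploit the heat equation \eqref{Pesp2-90}, which is the only place where $Av$ appears, and to pair it with $v$ itself to manufacture the quantity $\|A^{1/2}v\|^2 = \langle Av,v\rangle$. Concretely, taking the duality product of \eqref{Pesp2-90} with $v$ yields
\begin{equation*}
\frac{\delta}{\rho_5}\|A^{1/2}v\|^2 = \langle f^9,v\rangle - i\lambda\langle\theta,v\rangle - \frac{K}{\rho_5}\langle A^{1/2}\theta,A^{1/2}v\rangle.
\end{equation*}
Three terms must now be controlled by $\|F\|_{\mathbb{H}_2}\|U\|_{\mathbb{H}_2}$.

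The term $\langle f^9,v\rangle$ is immediate from Cauchy--Schwarz. The term $\frac{K}{\rho_5}\langle A^{1/2}\theta,A^{1/2}v\rangle$ is handled by Young's inequality: bound it by $\varepsilon\|A^{1/2}v\|^2 + C_\varepsilon\|A^{1/2}\theta\|^2$ and absorb the $\|A^{1/2}\theta\|^2$ into $\|F\|_{\mathbb{H}_2}\|U\|_{\mathbb{H}_2}$ using the dissipation estimate \eqref{dis2-10}; the $\varepsilon\|A^{1/2}v\|^2$ piece is finally absorbed into the left-hand side. This is exactly the reason the statement carries the auxiliary constants $\varepsilon, C_\varepsilon$.

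The main obstacle is the middle term $i\lambda\langle\theta,v\rangle$, which a priori contains an uncontrolled factor of $\lambda$. The key idea is that $i\lambda v$ can be eliminated by using the evolution equation \eqref{Pesp2-40}, which expresses $i\lambda v$ as a spatial combination of $\psi$, $\varphi_x-\psi$, $\theta$ and $f^4$. Writing $i\lambda\langle\theta,v\rangle = -\langle\theta,i\lambda v\rangle$ (using that $\lambda\in\mathbb R$) and substituting yields
\begin{equation*}
i\lambda\langle\theta,v\rangle = \frac{b_1}{\rho_2}\langle A^{1/2}\theta,A^{1/2}\psi\rangle - \frac{\kappa_1}{\rho_2}\langle\theta,\varphi_x-\psi\rangle - \frac{\delta}{\rho_2}\|A^{1/2}\theta\|^2 - \langle\theta,f^4\rangle,
\end{equation*}
where the self-adjointness of $A^{1/2}$ has been used in the first term. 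Each summand on the right has no $\lambda$ dependence.

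Finally, I would close the estimate by Cauchy--Schwarz together with the continuous embedding $\mathfrak{D}(A^{1/2})\hookrightarrow\mathfrak{D}(A^0)$: the factors $\|A^{1/2}\psi\|$, $\|\varphi_x-\psi\|$ and $\|\theta\|\leq C\|A^{1/2}\theta\|$ are all bounded by $\|U\|_{\mathbb{H}_2}$, while $\|A^{1/2}\theta\|^2$ and the cross term $\|A^{1/2}\theta\|\,\|A^{1/2}v\|$ are controlled via \eqref{dis2-10} (again combined with Young to absorb the $\|A^{1/2}v\|^2$ contribution). Choosing $\varepsilon$ sufficiently small gives
\begin{equation*}
\Bigl(\frac{\delta}{\rho_5}-\varepsilon\Bigr)\|A^{1/2}v\|^2 \leq C_\varepsilon\|F\|_{\mathbb{H}_2}\|U\|_{\mathbb{H}_2},
\end{equation*}
which is the desired inequality.
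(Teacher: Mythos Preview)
Your proof is correct and follows essentially the same route as the paper: take the duality product of \eqref{Pesp2-90} with $v$, replace $i\lambda v$ via \eqref{Pesp2-40}, then estimate the resulting $\lambda$-free terms with Cauchy--Schwarz, Young and the dissipation inequality \eqref{dis2-10}, absorbing the residual $\varepsilon\|A^{1/2}v\|^2$ into the left-hand side. The paper's computation is the same line-for-line, only with the substitution written out explicitly in one displayed chain.
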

\begin{proof}
Performing the duality product of \eqref{Pesp2-90} for $v$ and using \eqref{Pesp2-40}, we obtain
\begin{eqnarray*}
\dfrac{\delta}{\rho_5}\|A^\frac{1}{2}v\|^2 &= & \dual{\theta}{i\lambda v}-\dfrac{K}{\rho_5}\dual{A^\frac{1}{2}\theta}{A^\frac{1}{2}v}+\dual{f^9}{v}\\
& =&-\dfrac{b_1}{\rho_2}\dual{A^\frac{1}{2}\theta}{A^\frac{1}{2}\psi}+\dfrac{\kappa_1}{\rho_2}\dual{\theta}{\varphi_x-\psi}+\dfrac{\delta}{\rho_2}\|A^\frac{1}{2}\theta\|^2+\dual{\theta}{f^4}\\
& & -\dfrac{K}{\rho_5}\dual{A^\frac{1}{2}\theta}{A^\frac{1}{2}v}+\dual{f^9}{v},
\end{eqnarray*}
using estimative \eqref{dis2-10} and     applying Cauchy-Schwarz and Young inequalities, for $\varepsilon>0$, exists $C_\varepsilon>0$ independent of $\lambda$, such that
\begin{equation*}
\|A^\frac{1}{2}v\|^2\leq C_\varepsilon\|F\|_{\mathbb{H}_2}\|U\|_{\mathbb{H}_2}+\varepsilon \|A^\frac{1}{2}v\|^2.
\end{equation*}
\end{proof}
\begin{lemma}\label{Lemma52}
Let $\delta>0$.  There exists $C_\delta>0$ such that the solutions of the system \eqref{Eq2.13}-\eqref{Eq2.17}  for $|\lambda| > \delta$,  satisfy
\begin{eqnarray}
\label{Item01Lemma52}
(i)\quad |\lambda|\|\theta\|^2& \leq &C_\delta \|F\|_{\mathbb{H}_2}\|U\|_{\mathbb{H}_2}\quad{\rm for}\quad 0\leq\beta_1,\beta_2,\beta_3\leq 1,\\
\label{Item02Lemma52}
(ii)\quad |\lambda|\|u\|^2 &\leq & C_\delta \|F\|_{\mathbb{H}_2}\|U\|_{\mathbb{H}_2}\quad {\rm for}\quad \dfrac{1}{2}\leq\beta_1\leq 1,\\
\label{Item03Lemma52}
(iii)\quad |\lambda|\|s\|^2 &\leq & C_\delta\|F\|_{\mathbb{H}_2}\|U\|_{\mathbb{H}_2}\quad {\rm for}\quad \dfrac{1}{2}\leq\beta_2\leq 1,\\
\label{Item04Lemma25}
(iv)\quad |\lambda|\|\varphi_x-\psi\|^2  & \leq & C_\delta \|F\|_{\mathbb{H}_2}\|U\|_{\mathbb{H}_2}\quad{\rm for}\quad \dfrac{1}{2}\leq \beta_1\leq 1,\\
\label{Item05Lemma52}
(v)\quad |\lambda|\|y_x-z\|^2 & \leq & C_\delta \|F\|_{\mathbb{H}_2}\|U\|_{\mathbb{H}_2}\quad{\rm for}\quad \dfrac{1}{2}\leq \beta_2\leq 1.
\end{eqnarray}
\end{lemma}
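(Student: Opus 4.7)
The plan is to prove each of the five items by choosing a multiplier tailored to the quantity being estimated, and then using the dissipation identity \eqref{dis2-10}, together with Lemmas \ref{Lemma62}, \ref{Lemma32} and \ref{Lemma22}, to absorb lower-order terms. Item (i) is the easiest: I would test \eqref{Pesp2-90} against $\theta$ to obtain $i\lambda\|\theta\|^2 = -\tfrac{K}{\rho_5}\|A^{1/2}\theta\|^2 - \tfrac{\delta}{\rho_5}\dual{A^{1/2}v}{A^{1/2}\theta} + \dual{f^9}{\theta}$. The right-hand side is bounded by $C\|F\|_{\mathbb{H}_2}\|U\|_{\mathbb{H}_2}$ after Cauchy--Schwarz and Young, using \eqref{dis2-10} for $\|A^{1/2}\theta\|^2$ and Lemma \ref{Lemma62} for $\|A^{1/2}v\|^2$. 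No restriction on $(\beta_1,\beta_2,\beta_3)$ is needed for this step, in agreement with the statement.

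Items (ii) and (iii) are symmetric in the two Timoshenko beams, so I focus on (ii). Testing \eqref{Pesp2-20} against $u$ produces the identity $i\lambda\|u\|^2 + \tfrac{\kappa_1}{\rho_1}\dual{A\varphi}{u} + \tfrac{\kappa_1}{\rho_1}\dual{\psi_x}{u} - \tfrac{\jmath}{\rho_1}\dual{y-\varphi}{u} + \tfrac{\gamma_1}{\rho_1}\|A^{\beta_1/2}u\|^2 = \dual{f^2}{u}$. The stiff term is $\dual{A\varphi}{u}$, which by self-adjointness of $A$ equals $\dual{A^{1-\beta_1/2}\varphi}{A^{\beta_1/2}u}$. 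The factor $\|A^{\beta_1/2}u\|^2$ is already controlled by \eqref{dis2-10}, so it remains to handle $\|A^{1-\beta_1/2}\varphi\|$, for which I invoke Lions' interpolation (Theorem \ref{Lions-Landau-Kolmogorov}) between $\|A^{1/2}\varphi\|$ (bounded by $\|U\|_{\mathbb{H}_2}$ via Lemma \ref{Lemma22}(ii) together with the trivial estimate $\|\varphi_x\|^2\leq 2\|\varphi_x-\psi\|^2 + 2\|\psi\|^2$) and $\|A\varphi\|$ (read off from \eqref{Pesp2-20} in terms of $|\lambda|\|u\|$ plus $\|F\|$-controlled terms). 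The threshold $\beta_1\geq 1/2$ is precisely what tunes the interpolation exponents so that Young's inequality produces an estimate of the form $\varepsilon|\lambda|\|u\|^2 + C_\varepsilon\|F\|_{\mathbb{H}_2}\|U\|_{\mathbb{H}_2}$, with the $\varepsilon|\lambda|\|u\|^2$ term absorbed on the left. The identical argument applied to the $(y,s)$-block through \eqref{Pesp2-60} yields (iii) under $\beta_2\geq 1/2$.

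For items (iv) and (v), set $p=\varphi_x-\psi$ and $q=y_x-z$; by symmetry I describe only (iv). Differentiating \eqref{Pesp2-10} in $x$ and subtracting \eqref{Pesp2-30} gives $i\lambda p = u_x - v + f^1_x - f^3$. Testing against $p$ and integrating by parts yields $i\lambda\|p\|^2 = -\dual{u}{p_x} - \dual{v}{p} + \dual{f^1_x-f^3}{p}$. I then substitute $p_x = -(A\varphi+\psi_x)$ together with the expression read off from \eqref{Pesp2-20}, namely $p_x = (\rho_1/\kappa_1)i\lambda u + (\gamma_1/\kappa_1)A^{\beta_1}u - (\jmath/\kappa_1)(y-\varphi) - (\rho_1/\kappa_1)f^2$. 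Feeding item (ii) into the resulting $|\lambda|\|u\|^2$ contribution, \eqref{dis2-10} into $\|A^{\beta_1/2}u\|^2$, and Lemmas \ref{Lemma32}, \ref{Lemma22}(i) into the remaining cross terms delivers $|\lambda|\|p\|^2\leq C\|F\|_{\mathbb{H}_2}\|U\|_{\mathbb{H}_2}$. Item (v) follows identically on the $(y,z)$-beam under $\beta_2\geq 1/2$.

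The principal obstacle is closing item (ii): the stiff cross term $\dual{A\varphi}{u}$ resists elementary Cauchy--Schwarz, since neither $\|A\varphi\|$ nor $\|u\|$ is individually controlled with the required $|\lambda|^{-1/2}$ decay, and the interpolation bookkeeping must be done carefully. The hypothesis $\beta_1\geq 1/2$ is sharp at this step: a weaker exponent would force the Lions inequality to leak a positive power of $|\lambda|$ into the residual, destroying the target $\|F\|_{\mathbb{H}_2}\|U\|_{\mathbb{H}_2}$-bound and leaving only a Gevrey-type (not analytic) conclusion, consistent with the $[\frac{1}{2},1]^3$ restriction. Once (ii) and (iii) are in hand, items (iv) and (v) reduce to comparatively routine multiplier manipulations.
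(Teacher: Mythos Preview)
Your arguments for items (i), (iv) and (v) are correct and coincide with the paper's proof: testing \eqref{Pesp2-90} against $\theta$ for (i), and deriving the identity $i\lambda(\varphi_x-\psi)=(u_x-v)+f^1_x-f^3$, then substituting $(\varphi_x-\psi)_x$ from \eqref{Pesp2-20} for (iv) (analogously for (v)), exactly as you describe.

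The gap is in your treatment of item (ii) (and hence (iii)). You propose to test \eqref{Pesp2-20} against $u$ and control the stiff term $\dual{A\varphi}{u}=\dual{A^{1-\beta_1/2}\varphi}{A^{\beta_1/2}u}$ by interpolating $\|A^{1-\beta_1/2}\varphi\|$ between $\|A^{1/2}\varphi\|$ and $\|A\varphi\|$, with $\|A\varphi\|$ ``read off from \eqref{Pesp2-20} in terms of $|\lambda|\|u\|$ plus $\|F\|$-controlled terms''. But \eqref{Pesp2-20} also contains the damping term $\gamma_1 A^{\beta_1}u$, so the honest bound is $\|A\varphi\|\leq C(|\lambda|\|u\|+\|A^{\beta_1}u\|+\|U\|_{\mathbb{H}_2}+\|F\|_{\mathbb{H}_2})$. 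For $\beta_1>1/2$ the quantity $\|A^{\beta_1}u\|$ is \emph{not} dominated by the available controls: dissipation only gives $\|A^{\beta_1/2}u\|$, and the domain only gives $u\in\mathfrak{D}(A^{1/2})$ with $\|A^{1/2}u\|\leq C(|\lambda|\|U\|_{\mathbb{H}_2}+\|F\|_{\mathbb{H}_2})$, which is too weak. Consequently your Young splitting cannot produce $\varepsilon|\lambda|\|u\|^2+C_\varepsilon\|F\|_{\mathbb{H}_2}\|U\|_{\mathbb{H}_2}$; at best it yields $\varepsilon|\lambda|\|U\|_{\mathbb{H}_2}^2$, which cannot be absorbed. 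More fundamentally, testing \eqref{Pesp2-20} with $u$ and taking imaginary parts does not decouple $|\lambda|\|u\|^2$ from $|\lambda|\|A^{1/2}\varphi\|^2$: the identity reduces to an energy-type relation, not the sharp bound.

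The paper's proof of (ii) uses a different multiplier: it tests \eqref{Pesp2-20} against $\lambda A^{-\beta_1}u$ and takes the \emph{real} part. The point is that the damping term itself becomes $\dfrac{\gamma_1}{\rho_1}\dual{A^{\beta_1}u}{\lambda A^{-\beta_1}u}=\dfrac{\gamma_1}{\rho_1}\lambda\|u\|^2$, i.e.\ the target appears directly on the left. The inertial term $i\lambda u$ contributes $i|\lambda|^2\|A^{-\beta_1/2}u\|^2$, which is purely imaginary and drops out. For the elastic terms one substitutes $\lambda\varphi=-i(u+f^1)$ and $\lambda\psi=-i(v+f^3)$ from \eqref{Pesp2-10}--\eqref{Pesp2-30}; the principal contributions $\|A^{(1-\beta_1)/2}u\|^2$ and $\dual{v}{A^{-\beta_1}u_x}$ are again imaginary or controlled, and the remainders require only the embeddings $\mathfrak{D}(A^0)\hookrightarrow\mathfrak{D}(A^{1/2-\beta_1})$ and $\mathfrak{D}(A^{1/2})\hookrightarrow\mathfrak{D}(A^{1-\beta_1})$, which hold precisely when $\beta_1\geq 1/2$. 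This is where the threshold genuinely enters; no interpolation theorem is needed for this lemma.
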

\begin{proof} 
$(i)$ Taking the duality product between \eqref{Pesp2-90} and $\theta$, taking advantage of the self-adjointness of the powers of the operator $A$,   we arrive at:
 \begin{eqnarray*}
 i\lambda\|\theta\|^2 &= &-\dfrac{K}{\rho_5}\|A^\frac{1}{2}\theta\|^2-\dfrac{\delta}{\rho_5}\dual{A^\frac{1}{2}v}{A^\frac{1}{2}\theta}+\dual{f^9}{\theta}.
 \end{eqnarray*}
 Finally, taking imaginary part, applying Young inequality, estimates \eqref{dis-10} and \eqref{EcA12v} of Lemma \ref{Lemma62}, finish to proof this item.
\\
{\bf  Proof:} $(ii)$ Taking the duality product between \eqref{Pesp2-20} and $\lambda A^{-\beta_1}u$, using \eqref{Pesp2-10} and \eqref{Pesp2-30}, we arrive at:
 \begin{eqnarray*}
 \dfrac{\gamma_1}{\rho_1}\lambda\|u\|^2\hspace*{-0.3cm} &= & -i|\lambda|^2\|A^{-\frac{\beta_1}{2}}u\|^2-\dfrac{\kappa_1}{\rho_1}\dual{\lambda\varphi}{A^{1-\beta_1}u}-\dfrac{\kappa_1}{\rho_1}\dual{\lambda\psi_x}{A^{-\beta_1}u}\\
 & & +\dfrac{\jmath}{\rho_1}\dual{\dfrac{\lambda}{\sqrt{|\lambda|}}(y-\varphi)}{\sqrt{|\lambda|}A^{-\beta_1}u}+\dual{f^2}{\lambda A^{-\beta_1}u}\\
 &=&\hspace*{-0.3cm} -i|\lambda|^2\|A^{-\frac{\beta_1}{2}}u\|^2+\dfrac{i\kappa_1}{\rho_1}\|A^\frac{1-\beta_1}{2}u\|^2+\dfrac{i\kappa_1}{\rho_1}\dual{A^\frac{1}{2}f^1}{A^{\frac{1}{2}-\beta_1}u}-\dfrac{i\kappa_1}{\rho_1}\dual{v}{A^{-\beta_1}u_x}\\
 & & +\dfrac{i\kappa_1}{\rho_1}\dual{f^3_x}{A^{-\beta_1}u} +\dfrac{\jmath}{\rho_1}\dual{\dfrac{\lambda}{\sqrt{|\lambda|}}(y-\varphi)}{\sqrt{|\lambda|}A^{-\beta_1}u}-\dfrac{i\kappa_1}{\rho_1}\dual{f^2}{A^{1-\beta_1}\varphi}\\
 & &-\dfrac{i\kappa_1}{\rho_1}\dual{f^2}{A^{-\beta_1}\psi_x}+\dfrac{i\jmath}{\rho_1}\dual{f^2}{A^{-\beta_1}(y-\varphi)}-\dfrac{i\gamma_1}{\rho_1}\dual{f^2}{u}+i\|A^{-\frac{\beta_1}{2}}f^2\|^2.
 \end{eqnarray*}
 Taking real part and applying Cauchy-Schwarz and Young inequalities, for $\varepsilon>0$, exists $C_\varepsilon>0$, such that
\begin{eqnarray*}
 |\lambda|\|u\|^2 & \leq &C\{\|A^\frac{1}{2}f^1\|\|A^{\frac{1}{2}-\beta_1}u\|+\|v\|\|A^{-\beta_1}u_x\|+\|f^3_x\|\|A^{-\beta_1}u\| \} +C_\varepsilon |\lambda|\|y-\varphi\|^2\\
 & & +\varepsilon |\lambda|\|A^{-\beta_1}u\|^2+C\{\|f^2\|\|A^{1-\beta_1}\varphi\|+\|f^2\|\|A^{-\beta_1}\psi_x\|\\
& &  +\|f^2\|\|A^{-\beta_1}(y-\varphi)\|+\|f^2\|\|u\|\},
 \end{eqnarray*}
as  from $\frac{1}{2}\leq \beta_1\leq 1$, we have $-\beta_1\leq 0$,  $1-\beta_1\leq \frac{1}{2}$, $\frac{1}{2}-\beta_1\leq 0$  and $-\frac{1}{2}\leq \frac{1-2\beta_1}{2}\leq 0$, then  $\mathfrak{D}(A^\frac{1}{2})\hookrightarrow \mathfrak{D}(A^{1-\beta_1})$  and  $\mathfrak{D}(A^0)\hookrightarrow \mathfrak{D}(A^{\frac{1}{2}-\beta_1})$,   furthermore, from the estimative  \eqref{Item01Lemma32} of  Lemma \ref{Lemma32}  and $\|A^{-\beta_1}u_x\|=\|A^\frac{1-2\beta_1}{2}u\|$, we finish to proof this item.\\
{\bf Proof:} $(iii)$ Taking the duality product between \eqref{Pesp2-60} and $\lambda A^{-\beta_2}s$, using \eqref{Pesp2-50} and \eqref{Pesp2-60}, we arrive at:
 \begin{eqnarray*}
 \dfrac{\gamma_2}{\rho_3}\lambda\|s\|^2\hspace*{-0.3cm} &= &\hspace*{-0.3cm} -i|\lambda|^2\|A^{-\frac{\beta_2}{2}}s\|^2-\dfrac{\kappa_2}{\rho_3}\dual{\lambda y}{A^{1-\beta_2}s}-\dfrac{\kappa_2}{\rho_3}\dual{\lambda z_x}{A^{-\beta_2}s}\\
 & & -\dfrac{\jmath}{\rho_3}\dual{\dfrac{\lambda}{\sqrt{|\lambda|}}(y-\varphi)}{\sqrt{|\lambda|}A^{-\beta_2}s}+\dual{f^6}{\lambda A^{-\beta_2}s}\\
 &=& \hspace*{-0.3cm}-i|\lambda|^2\|A^{-\frac{\beta_2}{2}}s\|^2+\dfrac{i\kappa_2}{\rho_3}\|A^\frac{1-\beta_2}{2}s\|^2+\dfrac{i\kappa_2}{\rho_3}\dual{A^\frac{1}{2}f^5}{A^{\frac{1}{2}-\beta_2}s}-\dfrac{i\kappa_2}{\rho_3}\dual{w}{A^{-\beta_2}s_x}\\
 & & +\dfrac{i\kappa_2}{\rho_3}\dual{f^7_x}{A^{-\beta_1}s} +\dfrac{\jmath}{\rho_1}\dual{\dfrac{\lambda}{\sqrt{|\lambda|}}(y-\varphi)}{\sqrt{|\lambda|}A^{-\beta_2}s}-\dfrac{i\kappa_2}{\rho_3}\dual{f^6}{A^{1-\beta_2}y}\\
 & &-\dfrac{i\kappa_2}{\rho_3}\dual{f^6}{A^{-\beta_2}z_x}-\dfrac{i\jmath}{\rho_3}\dual{f^6}{A^{-\beta_2}(y-\varphi)}-\dfrac{i\gamma_2}{\rho_3}\dual{f^6}{s}+i\|A^{-\frac{\beta_2}{2}}f^6\|^2.
 \end{eqnarray*}
 Taking, real part, and applying Cauchy-Schwarz and Young inequalities, for $\varepsilon>0$, exists $C_\varepsilon>0$, such that
\begin{eqnarray*}
 |\lambda|\|s\|^2 & \leq &C\{\|A^\frac{1}{2}f^5\|\|A^{\frac{1}{2}-\beta_2}s\|+\|w\|\|A^{-\beta_2}s_x\|+\|f^7_x\|\|A^{-\beta_2}s\| \} +C_\varepsilon |\lambda|\|y-\varphi\|^2\\
 & & +\varepsilon |\lambda|\|A^{-\beta_2}s\|^2+C\{\|f^6\|\|A^{1-\beta_2}y\|+\|f^6\|\|A^{-\beta_2}z_x\|\\
& &  +\|f^6\|\|A^{-\beta_2}(y-\varphi)\|+\|f^6\|\|s\|\},
 \end{eqnarray*}
as  from $\frac{1}{2}\leq \beta_2\leq 1$, we have $-\beta_2\leq 0$,  $1-\beta_2\leq \frac{1}{2}$, $\frac{1}{2}-\beta_2\leq 0$  and $-\frac{1}{2}\leq \frac{1-2\beta_2}{2}\leq 0$, then  $\mathfrak{D}(A^\frac{1}{2})\hookrightarrow \mathfrak{D}(A^{1-\beta_2})$  and  $\mathfrak{D}(A^0)\hookrightarrow \mathfrak{D}(A^{\frac{1}{2}-\beta_2})$,   furthermore, from the estimative  \eqref{Item01Lemma32} of  Lemma \ref{Lemma32}  and $\|A^{-\beta_2}z_x\|=\|A^\frac{1-2\beta_2}{2}z\|$, we finish to proof this item.
\\
{\bf Proof:} $(iv)$ 
 From \eqref{Pesp2-10}, we have $i\lambda\varphi_x-u_x=f^1_x$, subtracting from this result the equation \eqref{Pesp2-30}, we have
\begin{equation}\label{Eq03Lemma052}
i\lambda(\varphi_x-\psi)-(u_x-v)=f^1_x-f^3,
\end{equation} 
taking the duality product between \eqref{Eq03Lemma052} and $\varphi_x-\psi$ and using \eqref{Pesp2-20}, we arrive at:
\begin{eqnarray*}
i\lambda\|\varphi_x-\psi\|^2 &= &\dual{(u_x-v)}{\varphi_x-\psi}+\dual{f_x^1}{(\varphi_x-\psi)}-\dual{f^3}{(\varphi_x-\psi)}\\
&=&-\dual{u}{(\varphi_x-\psi)_x}-\dual{v}{\varphi_x-\psi}+\dual{f_x^1}{(\varphi_x-\psi)}-\dual{f^3}{(\varphi_x-\psi)}\\
&= &\dfrac{i\rho_1}{\kappa_1}\lambda\|u\|^2-\dfrac{\jmath}{\kappa_1}\dual{u}{(y-\varphi)}-\dfrac{\gamma_1}{\kappa_1}\|A^\frac{\beta_1}{2}u\|^2+\dfrac{\rho_1}{\kappa_1}\dual{u}{f^2} \\
& & -\dual{v}{(\varphi_x-\psi)}+\dual{f_x^1}{(\varphi_x-\psi)}-\dual{f^3}{(\varphi_x-\psi)},
\end{eqnarray*}
taking imaginary part and applying Cauchy-Schwarz and Young inequalities, and using \eqref{Eq012Exponential2},  we have
\begin{equation}\label{Eq04Lemma052}
|\lambda|\|\varphi_x-\psi\|^2\leq C\{|\lambda|\|u\|^2+\|F\|_{\mathbb{H}_2}\|U\|_{\mathbb{H}_2}\}\quad{\rm for}\quad 0\leq\beta_1,\beta_2, \beta_3\leq 1.
\end{equation}
Using  \eqref{Item02Lemma52} (item $(ii)$ this lemma) we finish proof of this item.
\\
{\bf Proof:} $(v)$ On the other hand, similarly
 from \eqref{Pesp2-50}, we have $i\lambda y_x-s_x=f^5_x$, subtracting from this result the equation \eqref{Pesp2-70}, we have  
\begin{equation}\label{Eq05Lemma052}
i\lambda(y_x-z)-(s_x-w)=f^5_x-f^7.
\end{equation} 
Taking the duality product between \eqref{Eq05Lemma052} and $y_x-z$ and using \eqref{Pesp2-40}, we arrive at:
\begin{eqnarray*}
i\lambda\|y_x-z\|^2 &= &\dual{(s_x-w)}{y_x-z}+\dual{f_x^5}{y_x-z}-\dual{f^7}{y_x-z}\\
&=&-\dual{s}{(y_x-z)_x}-\dual{w}{(y_x-z)}+\dual{f_x^5}{y_x}-\dual{f^5_x}{z}\\
& & -\dual{f^7}{y_x}+\dual{f^7}{z}\\
&= &\dfrac{i\rho_3\lambda}{\kappa_2}\|s\|^2-\dfrac{\jmath}{\kappa_2}\dual{s}{(y-\varphi)}-\dfrac{\gamma_2}{\kappa_2}\|A^\frac{\tau_2}{2}s\|^2+\dfrac{\rho_3}{\kappa_2}\dual{s}{f^6} \\
& & -\dual{w}{(y_x-z)}+\dual{f_x^5}{y_x}-\dual{f^5_x}{z}-\dual{f^7}{y_x}+\dual{f^7}{z}.
\end{eqnarray*}
Taking imaginary part and applying Cauchy-Schwarz and Young inequalities, we have
 \begin{equation}\label{Eq06Lemma052}
|\lambda|\|y_x-z\|^2\leq C\{ |\lambda|\|s\|^2+\|F\|_\mathbb{H}\|U\|_\mathbb{H}\}\quad{\rm for}\quad 0\leq\beta_1,\beta_2\beta_3\leq 1.
\end{equation}
Using  \eqref{Item03Lemma52} (item $(iii)$ this lemma) we finish proof of this lemma
.
\end{proof}
\begin{lemma}\label{Lemma72}
Let $\delta>0$.  There exists $C_\delta>0$ such that the solutions of the system \eqref{Eq2.13}-\eqref{Eq2.17}  for $|\lambda| > \delta$,  satisfy
\begin{eqnarray}
\label{EcLambdav}
|\lambda|\|v\|^2\leq C_\delta\|F\|_{\mathbb{H}_2}\|U\|_{\mathbb{H}_2}\quad{\rm for}\quad 0\leq\beta_1,\beta_2,\beta_3\leq 1.
\end{eqnarray}
\end{lemma}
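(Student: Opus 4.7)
The plan is to obtain the factor $|\lambda|$ on $\|v\|^2$ in the most direct way possible, namely by testing equation \eqref{Pesp2-40} against $v$ itself. This is natural because \eqref{Pesp2-40} carries the term $i\lambda v$, and the duality product $\dual{i\lambda v}{v}=i\lambda\|v\|^2$ yields precisely $\lambda\|v\|^2$ upon extracting the imaginary part. No spectral power of the form $A^{-\sigma}$ is needed here, which reflects the fact that the target estimate is essentially an energy-type identity rather than a regularity identity.

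Concretely, I would take the duality product of \eqref{Pesp2-40} with $v$, use the self-adjointness of $A$ to rewrite $\dual{A\psi}{v}=\dual{A^{1/2}\psi}{A^{1/2}v}$ and $\dual{A\theta}{v}=\dual{A^{1/2}\theta}{A^{1/2}v}$, and then take the imaginary part to reach an inequality of the form
\begin{equation*}
|\lambda|\,\|v\|^2 \;\le\; C\Bigl\{\,|\dual{A^{1/2}\psi}{A^{1/2}v}|+|\dual{\varphi_x-\psi}{v}|+|\dual{A^{1/2}\theta}{A^{1/2}v}|+|\dual{f^4}{v}|\,\Bigr\}.
\end{equation*}

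Now each of the four terms on the right is controlled by results already available. The thermal term $|\dual{A^{1/2}\theta}{A^{1/2}v}|$ is immediately bounded by $C\|F\|_{\mathbb{H}_2}\|U\|_{\mathbb{H}_2}$ after Cauchy--Schwarz, since \eqref{dis2-10} controls $\|A^{1/2}\theta\|^2$ and Lemma \ref{Lemma62} controls $\|A^{1/2}v\|^2$. The cross term $|\dual{A^{1/2}\psi}{A^{1/2}v}|$ is handled by Cauchy--Schwarz followed by Young's inequality: the factor $\|A^{1/2}v\|^2$ is absorbed by Lemma \ref{Lemma62}, while the factor $\|A^{1/2}\psi\|^2$ is trivially bounded by $\|U\|^2_{\mathbb{H}_2}$. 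The two remaining terms $|\dual{\varphi_x-\psi}{v}|$ and $|\dual{f^4}{v}|$ are dominated, respectively, by $\|U\|^2_{\mathbb{H}_2}$ and by $\|F\|_{\mathbb{H}_2}\|U\|_{\mathbb{H}_2}$.

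The only apparent obstacle is the residual $\varepsilon\|U\|^2_{\mathbb{H}_2}$ left over from Young's inequality. This is harmless: since Theorem \ref{TDExponential2} has already been proven, we may invoke the exponential decay estimate \eqref{Eq012Exponential2}, $\|U\|^2_{\mathbb{H}_2}\le C_\delta\|F\|_{\mathbb{H}_2}\|U\|_{\mathbb{H}_2}$, which absorbs every occurrence of $\|U\|^2_{\mathbb{H}_2}$ into $\|F\|_{\mathbb{H}_2}\|U\|_{\mathbb{H}_2}$. Combining all four estimates yields the desired bound
\begin{equation*}
|\lambda|\,\|v\|^2 \;\le\; C_\delta\,\|F\|_{\mathbb{H}_2}\|U\|_{\mathbb{H}_2}.
\end{equation*}
The result is uniform in $(\beta_1,\beta_2,\beta_3)\in[0,1]^3$ because the fractional parameters $\beta_i$ do not enter any of the inequalities above explicitly: all damping information has already been condensed into Lemma \ref{Lemma62} and into \eqref{Eq012Exponential2}, and equation \eqref{Pesp2-40}, which drives the whole argument, is itself $\beta_i$-independent.
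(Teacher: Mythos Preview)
Your proof is correct, and it is genuinely different from the paper's. The paper tests the heat equation \eqref{Pesp2-90} against $\lambda A^{-1}v$, so that the coupling term $\frac{\delta}{\rho_5}Av$ produces $\lambda\|v\|^2$; it then substitutes $i\lambda v$ via \eqref{Pesp2-40} and $\lambda A^{-1}\theta$ via \eqref{Pesp2-90} again, which leads to a longer list of cross terms and ultimately requires the estimate $|\lambda|\|\theta\|^2\leq C_\delta\|F\|_{\mathbb{H}_2}\|U\|_{\mathbb{H}_2}$ from Lemma~\ref{Lemma52}(i). Your route---test \eqref{Pesp2-40} against $v$ and take the imaginary part---is more direct: the factor $\lambda$ comes straight from $\dual{i\lambda v}{v}$, and every remaining term is controlled by Lemma~\ref{Lemma62} together with the already-proved resolvent bound \eqref{Eq012Exponential2}. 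Your argument therefore bypasses Lemma~\ref{Lemma52}(i) entirely. The paper's approach has the minor advantage of not relying on the exponential-decay estimate \eqref{Eq012Exponential2} quite as heavily (it still uses \eqref{EqvExponencial2} at the end), but in the present setting---where \eqref{Eq012Exponential2} and Lemma~\ref{Lemma62} are both available before Lemma~\ref{Lemma72}---your proof is shorter and cleaner.
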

\begin{proof}
Performing the duality product between equation \eqref{Pesp2-90} and $\lambda A^{-1}v$, and using \eqref{Pesp2-30} and \eqref{Pesp2-40}, we obtain
\begin{eqnarray*}
\dfrac{\delta}{\rho_5}\lambda\|v\|^2 &= & \dual{\lambda A^{-1}\theta}{i\lambda v}-\dfrac{K}{\rho_5}\dual{\theta}{\lambda v}+\dual{f^9}{\lambda A^{-1}v}\\
&=& -i\dfrac{b_1}{\rho_2}\dual{\theta}{v}-i\dfrac{b_1}{\rho_2}\dual{\theta}{f^3}+\dfrac{i\kappa_1K}{\rho_2\rho_5}\dual{\theta}{\varphi_x-\psi}+\dfrac{i\kappa_1\delta}{\rho_2\rho_5}\dual{v}{\varphi_x-\psi}\\
& & -\dfrac{i\kappa_1}{\rho_2}\dual{A^{-1}f^9}{\varphi_x-\psi}+\dfrac{\delta}{\rho_2}\lambda\|\theta\|^2+\dfrac{iK}{\rho_5}\dual{\theta}{f^4}+\dfrac{i\delta}{\rho_5}\dual{v}{f^4}\\
& & -i\dual{A^{-1}f^9}{f^4}-\dfrac{K}{\rho_5}\dual{\sqrt{|\lambda|}\theta}{\dfrac{\lambda}{\sqrt{|\lambda|}} v}-\dfrac{ib_1}{\rho_2}\dual{f^9}{\psi}\\
& & +\dfrac{i\kappa_1}{\rho_2}\dual{A^{-1}f^9}{\varphi_x-\psi}+\dfrac{i\delta}{\rho_2}\dual{f^9}{\theta}+i\dual{A^{-1}f^9}{f^4}.
\end{eqnarray*}
Applying Cauchy-Schwarz and Young inequalities, for $\varepsilon>0$, exists $C_\varepsilon>0$, such that
\begin{eqnarray*}
|\lambda|\|v\|^2 &\leq & C\{\|\theta\|\|v\|+\|\theta\|\|f^3\|+\|\theta\|\|\varphi_x-\psi\|+\|v\|\|\varphi_x-\psi\|+|\lambda|\|\theta\|^2\\
& & +\|\theta\|\|f^4\|+\|v\|\|f^4\|+\|f^9\|\|\psi\|+\|f^9\|\|\theta\|\}+C_\varepsilon|\lambda|\|\theta\|^2 +\varepsilon|\lambda|\|v\|^2.
\end{eqnarray*}
Finally, from estimates \eqref{EqvExponencial2}, \eqref{Item01Lemma52}, finish proof this lemma.
\end{proof}
\begin{lemma}\label{Lemma82}
Let $\delta>0$.  There exists $C_\delta>0$ such that the solutions of the system \eqref{Eq2.13}-\eqref{Eq2.17}  for $|\lambda| > \delta$,  satisfy
\begin{eqnarray}\label{Item01Lemma82}
(i)\; |\lambda|\|A^\frac{1}{2}\psi\|^2& \leq &  C_\delta\|F\|_{\mathbb{H}_2}\|U\|_{\mathbb{H}_2}\quad{\rm for}\quad 0\leq\beta_1,\beta_2,\beta_3\leq 1,\\
\label{Item02Lemma82}
(ii)\; |\lambda|\|w\|^2 &\leq &    C_\delta\|F\|_{\mathbb{H}_2}\|U\|_{\mathbb{H}_2}\quad{\rm for}\quad \dfrac{1}{2}\leq\beta_2,\beta_3\leq 1,\\
\label{Item03Lemma82}
(iii)\quad |\lambda|\|A^\frac{1}{2}z\|^2 &\leq &  C_\delta\|F\|_{\mathbb{H}_2}\|U\|_{\mathbb{H}_2}\quad{\rm for}\quad \dfrac{1}{2} \leq\beta_2,\beta_3\leq 1.
\end{eqnarray}
\end{lemma}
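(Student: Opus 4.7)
The plan is to prove items $(i)$, $(ii)$, $(iii)$ in that order, since $(iii)$ will invoke $(ii)$. Throughout I would use the resolvent equations \eqref{Pesp2-10}--\eqref{Pesp2-90}, the basic dissipation bound \eqref{dis2-10}, the exponential-decay estimate \eqref{Eq012Exponential2} (which implies $\|F\|_{\mathbb{H}_2}^2\leq C\|F\|_{\mathbb{H}_2}\|U\|_{\mathbb{H}_2}$ after normalizing $\|U\|_{\mathbb{H}_2}$), and the previously-established Lemmas \ref{Lemma62} and \ref{Lemma52}. For item $(i)$ it suffices to apply $A^{1/2}$ to \eqref{Pesp2-30} to get $i\lambda A^{1/2}\psi = A^{1/2}v + A^{1/2}f^3$, square both sides, divide by $|\lambda|>\delta$, and invoke Lemma \ref{Lemma62} on the $\|A^{1/2}v\|^2$ factor, absorbing $\|A^{1/2}f^3\|^2\leq C\|F\|_{\mathbb{H}_2}^2$ into the target right-hand side. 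No hypothesis on the $\beta_i$ is required.

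For item $(ii)$, the plan is to mimic the proof of item $(ii)$ of Lemma \ref{Lemma52}, with \eqref{Pesp2-80} playing the role of \eqref{Pesp2-20} and the test function $\lambda A^{-\beta_3}w$ in place of $\lambda A^{-\beta_1}u$. The damping term then produces $(\gamma_3/\rho_4)\lambda\|w\|^2$, the $i\lambda w$-term contributes $i|\lambda|^2\|A^{-\beta_3/2}w\|^2$ which vanishes when taking real part, and the $b_2 Az$-term is rewritten via $\lambda z = -i(w+f^7)$ from \eqref{Pesp2-70}, producing $\|A^{(1-\beta_3)/2}w\|^2$ and $\dual{A^{1-\beta_3}f^7}{w}$. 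The hypothesis $\beta_3\geq 1/2$ makes $(1-\beta_3)/2\leq \beta_3/2$, so the first is controlled by $\|A^{\beta_3/2}w\|^2\leq C\|F\|_{\mathbb{H}_2}\|U\|_{\mathbb{H}_2}$ via \eqref{dis2-10}, and it also keeps $A^{1-\beta_3}f^7\in L^2$. The $(y_x-z)$-cross term is treated by Cauchy--Schwarz and Young to yield $\varepsilon|\lambda|\|w\|^2 + C_\varepsilon|\lambda|\|y_x-z\|^2$, and the latter is controlled using \eqref{Item05Lemma52}, which is precisely where $\beta_2\geq 1/2$ enters. Absorbing the $\varepsilon|\lambda|\|w\|^2$ into the left-hand side delivers $(ii)$.

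For item $(iii)$, the naive route from \eqref{Pesp2-70} would introduce the uncontrolled quantity $\|A^{1/2}w\|$, so I would argue differently: differentiate \eqref{Pesp2-70} to obtain $i\lambda z_x - w_x = f^7_x$, take the duality product with $z_x$, and integrate by parts (boundary terms vanish since $w,z\in H^1_0$) to rewrite $\dual{w_x}{z_x}=\dual{w}{Az}$. Substituting $Az$ from \eqref{Pesp2-80} yields
\[
i\lambda\|A^{1/2}z\|^2 = \frac{i\rho_4\lambda}{b_2}\|w\|^2 + \frac{\kappa_2}{b_2}\dual{w}{y_x-z} - \frac{\gamma_3}{b_2}\|A^{\beta_3/2}w\|^2 + \frac{\rho_4}{b_2}\dual{w}{f^8} + \dual{f^7_x}{z_x}.
\]
Taking imaginary part, the $\|A^{\beta_3/2}w\|^2$ term drops out, the dominant $(\rho_4/b_2)|\lambda|\|w\|^2$ is controlled by item $(ii)$, and the remaining products are each bounded by $C\|F\|_{\mathbb{H}_2}\|U\|_{\mathbb{H}_2}$ via Cauchy--Schwarz and \eqref{Eq012Exponential2}. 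This delivers $(iii)$ under $1/2\leq\beta_2,\beta_3\leq 1$.

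The main obstacle is item $(ii)$: the test function $\lambda A^{-\beta_3}w$ must be chosen so that every fractional power of $A$ acting on $w$ in the cross terms either reduces to the dissipation norm $\|A^{\beta_3/2}w\|^2$ or can be absorbed back into $\varepsilon|\lambda|\|w\|^2$. This careful bookkeeping of exponents is what forces $\beta_3\geq 1/2$, while invoking Lemma \ref{Lemma52} to handle $|\lambda|\|y_x-z\|^2$ is what forces $\beta_2\geq 1/2$.
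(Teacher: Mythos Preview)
Your arguments for items $(ii)$ and $(iii)$ are essentially the paper's own. For $(ii)$ the paper also tests \eqref{Pesp2-80} against $\lambda A^{-\beta_3}w$, rewrites the $Az$-term via \eqref{Pesp2-70}, and controls $|\lambda|\|y_x-z\|^2$ by \eqref{Item05Lemma52}; the only detail you leave implicit is the handling of $\dual{f^8}{\lambda A^{-\beta_3}w}$, which the paper (and Lemma~\ref{Lemma52}(ii), which you cite as the template) treats by substituting $\lambda A^{-\beta_3}w$ back from \eqref{Pesp2-80}. For $(iii)$ your computation is a reframing of the paper's: they take the duality product of \eqref{Pesp2-80} with $w$ and replace $w$ by $i\lambda z-f^7$ in the $\dual{Az}{w}$ term, arriving at the same identity you obtain via $\dual{w_x}{z_x}=\dual{w}{Az}$.

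For item $(i)$ you take a genuinely different and shorter route: apply $A^{1/2}$ to \eqref{Pesp2-30} and invoke Lemma~\ref{Lemma62}, whereas the paper tests \eqref{Pesp2-40} against $i\lambda\psi$ and relies on Lemma~\ref{Lemma72}. Your path avoids the cross term $|\lambda|\|\varphi_x-\psi\|^2$ that appears in the paper's computation. However, your justification for absorbing $\|A^{1/2}f^3\|^2$ into $C\|F\|_{\mathbb{H}_2}\|U\|_{\mathbb{H}_2}$ is incorrect: exponential decay gives $\|U\|_{\mathbb{H}_2}\leq C\|F\|_{\mathbb{H}_2}$, not the reverse, so $\|F\|_{\mathbb{H}_2}^2\leq C\|F\|_{\mathbb{H}_2}\|U\|_{\mathbb{H}_2}$ is false in general. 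What your argument actually yields is
\[
|\lambda|\,\|A^{1/2}\psi\|^2\ \leq\ \frac{C}{|\lambda|}\bigl(\|F\|_{\mathbb{H}_2}\|U\|_{\mathbb{H}_2}+\|F\|_{\mathbb{H}_2}^2\bigr),
\]
which, thanks to the extra $|\lambda|^{-1}$ factor on $\|F\|_{\mathbb{H}_2}^2$, is entirely sufficient for the analyticity and Gevrey theorems downstream, but does not literally establish the inequality as stated in the lemma.
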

\begin{proof}\!$(i)$  
 Performing the duality product between equation \eqref{Pesp2-40} and $\frac{i\rho_2}{\kappa_1}\lambda\psi$, we have
\begin{eqnarray}
\nonumber
\dfrac{ib_1}{\kappa_1}\lambda\|A^\frac{1}{2}\psi\|^2\hspace*{-0.25cm}& =\hspace*{-0.25cm} &i\dual{\sqrt{|\lambda|}(\varphi_x-\psi)}{\dfrac{\lambda}{\sqrt{|\lambda|}} \psi} +\dfrac{i\rho_2}{\kappa_1}\lambda\|v\|^2+\dfrac{\rho_2}{\kappa_1}\dual{i\lambda v}{f^3}-\dfrac{\delta}{\kappa_1}\dual{A^\frac{1}{2}\theta}{A^\frac{1}{2}v}\\
\label{Ec000B2}
& & -\dfrac{\delta}{\kappa_1}\dual{A^\frac{1}{2}\theta}{A^\frac{1}{2}f^3}-\dfrac{\rho_2}{\kappa_1}\dual{f^4}{v}-\dfrac{\rho_2}{\kappa_1}\dual{f^4}{f^3}
\end{eqnarray}
 as, of \eqref{Pesp2-40}, we have
\begin{equation}\label{Ec000C2}
\dfrac{\rho_2}{\kappa_1}\dual{i\lambda v}{f^3}=-\dfrac{b_1}{\kappa_1}\dual{A^\frac{1}{2}\psi}{A^\frac{1}{2}f^3}+\dual{\varphi_x-\psi}{f^3}+\dfrac{\delta}{\kappa_1}\dual{A^\frac{1}{2}\theta}{A^\frac{1}{2}f^3}+\dfrac{\rho_2}{\kappa_1}\dual{f^4}{f^3},
\end{equation}
using \eqref{Ec000C2} in \eqref{Ec000B2}, we have
\begin{eqnarray}
\nonumber
\dfrac{ib_1}{\kappa_1}\lambda\|A^\frac{1}{2}\psi\|^2\hspace*{-0.25cm} & = &\hspace*{-0.25cm} \dfrac{i\rho_2}{\kappa_1}\lambda\|v\|^2+ i\dual{\sqrt{|\lambda|}(\varphi_x-\psi)}{\dfrac{\lambda}{\sqrt{|\lambda|}} \psi} -\dfrac{\delta}{\kappa_1}\dual{A^\frac{1}{2}\theta}{A^\frac{1}{2}v}-\dfrac{\rho_2}{\kappa_1}\dual{f^4}{v}\\
\label{Ec000D2}
& & -\dfrac{b_1}{\kappa_1}\dual{A^\frac{1}{2}\psi}{A^\frac{1}{2}f^3}+\dual{\varphi_x-\psi}{f^3}.
\end{eqnarray}
Applying Cauchy-Schwarz and Young  inequalities, for $\varepsilon>0$, exists $C_\varepsilon>0$ independent of $\lambda$,  such that
\begin{multline*}
|\lambda|\|A^\frac{1}{2}\psi\|^2\leq C_\varepsilon|\lambda|\|\varphi_x-\psi\|^2+\varepsilon|\lambda|\|\psi\|^2+C\{\|A^\frac{1}{2}\theta\|^2+\|A^\frac{1}{2}v\|+|\lambda|\|v\|^2+\|f^4\|\|v\|\\
+\|A^\frac{1}{2}\psi\|\|A^\frac{1}{2}f^3\|+\|\varphi_x\|\|f^3\|+\|\psi\|\|f^3\|\},
\end{multline*}
finally, from $\mathfrak{D}(A^0)\hookrightarrow \mathfrak{D}(A^\frac{1}{2})$, estimates  \eqref{EqvExponencial2}, \eqref{dis2-10}, \eqref{EcA12v} Lemma \ref{Lemma62} and \eqref{EcLambdav} Lemma \ref{Lemma72}, we finish to proof this item.
\\
{\bf Proof:} $(ii)$ Performing the duality product between equation  \eqref{Pesp2-80} and  $\lambda A^{-\beta_3}w$,  and using \eqref{Pesp2-70}, we obtain
\begin{eqnarray}
\nonumber
\dfrac{\gamma_3}{\rho_4}\lambda\|w\|^2  &= &-i|\lambda|^2\|A^{-\frac{\beta_3}{2}}w\|^2-\dfrac{b_2}{\rho_4}\dual{\lambda z}{A^{1-\beta_3}w}+\dfrac{\kappa_2}{\rho_4}\dual{\dfrac{\lambda}{\sqrt{|\lambda|}}(y_x-z)}{\sqrt{|\lambda|}A^{-\beta_3}w}\\
\nonumber
& & +\dual{f^8}{\lambda A^{-\beta_3}w}\\
\label{Ec0052}
&=&  -i|\lambda|^2\|A^{-\frac{\beta_3}{2}}w\|^2+\dfrac{ib_2}{\rho_4}\|A^\frac{1-\beta_3}{2}w\|^2+\dfrac{ib_2}{\rho_4}\dual{A^\frac{1}{2}f^7}{A^{\frac{1}{2}-\beta_3}w}\\
\nonumber
& &  +\dfrac{\kappa_2}{\rho_4}\dual{\dfrac{\lambda}{\sqrt{|\lambda|}}(y_x-z)}{\sqrt{|\lambda|}A^{-\beta_3}w}-\dfrac{ib_2}{\rho_4}\dual{f^8}{A^{1-\beta_3}z}\\
& &+\dfrac{i\kappa_2}{\rho_4}\dual{f^8}{A^{-\beta_3}(y_x-z)} -\dfrac{i\gamma_3}{\rho_4}\dual{f^8}{w}+i\|f^8\|^2.
\end{eqnarray}
 Taking, real part, and applying Cauchy-Schwarz and Young inequalities, for $\varepsilon>0$, exists $C_\varepsilon>0$, such that
 \begin{eqnarray*}
 |\lambda|\|w\|^2 &\leq & C_\varepsilon|\lambda|\|y_x-z\|^2+\varepsilon|\lambda|\|A^{-\beta_3}w\|^2+C\{\|A^\frac{1}{2}f^7\|\|A^{\frac{1}{2}-\beta_3}w\|\\
 & & +\|f^8\|\|A^{1-\beta_3}z\|+\|f^8\|\|A^{-\beta_3}(y_x-z)\|+\|f^8\|\|w\|.
 \end{eqnarray*}
as form $\frac{1}{2}\leq\beta_3\leq 1$, we have: $\frac{1}{2}-\beta_3\leq 0$ and  $1-\beta_3\leq\frac{1}{2}$, then $\mathfrak{D}(A^0)\hookrightarrow \mathfrak{D}(A^{\frac{1}{2}-\beta_3})$  and  $\mathfrak{D}(A^\frac{1}{2})\hookrightarrow \mathfrak{D}(A^{1-\beta_3})$. Finally applying estimative \eqref{Item05Lemma52} of Lemma \ref{Lemma52}, finish proof this item.\\
{\bf Proof:} $(iii)$   Performing the duality product between equation \eqref{Pesp2-80} and $w$  and using \eqref{Pesp2-70}, we have
\begin{eqnarray*}
i\lambda\|w\|^2-i\dfrac{b_2}{\rho_4}\lambda\|A^\frac{1}{2}z\|^2-\dfrac{b_2}{\rho_4}\dual{A^\frac{1}{2}z}{A^\frac{1}{2}f^7}-\dfrac{\kappa_2}{\rho_4}\dual{y_x-z}{w}+\dfrac{\gamma_3}{\rho_4}\|A^\frac{\beta_3}{2}w\|^2=\dual{f^8}{w},
\end{eqnarray*}
Taking imaginary part, and applying Cauchy-Schwarz  and Young inequalities, we obtain
\begin{multline}\label{Ec-wZ}
|\lambda|\|A^\frac{1}{2}z\|^2\leq C\{|\lambda|\|w\|^2+\|A^\frac{1}{2}z\|\|A^\frac{1}{2}f^7\|+\|y_x-z\|^2+\|w\|^2+\|f^8\|\|w\|\}\\
\leq C_\delta\{\|F\|_{\mathbb{H}_2}\|U\|_{\mathbb{H}_2}+|\lambda|\|w\|^2\}\quad \text{for}\quad 0\leq \beta_1,\beta_2,\beta_3\leq 1.
\end{multline}
Finally, applying of item (ii) this Lemma  and \eqref{EqvExponencial2}, finish to proof this item.
\end{proof}
\begin{theorem}\label{AnaliticidadeS2}
The semigroup $S_2(t)=e^{t\mathbb{B}_2}$  is analytic for $(\beta_1,\beta_2,\beta_3)\in [\frac{1}{2}, 1]^3$.
\end{theorem}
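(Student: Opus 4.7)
The plan is to reduce the theorem to a direct application of Theorem~\ref{LiuZAnaliticity} and its equivalent form in Remark~\ref{ObsEquivAnaly}. Since $S_2(t)$ is a $C_0$-semigroup of contractions (established via Theorem~\ref{TLiuZ} in the well-posedness section), only two conditions remain: the imaginary axis lies in $\rho(\mathbb{B}_2)$, and the resolvent satisfies $|\lambda|\|U\|^2_{\mathbb{H}_2}\leq C_\delta\|F\|_{\mathbb{H}_2}\|U\|_{\mathbb{H}_2}$ for $|\lambda|>\delta$.

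The first condition is immediate from Lemma~\ref{EImaginary2}. The second condition is also essentially immediate, because the accumulated work of Lemmas~\ref{Lemma22}, \ref{Lemma52}, \ref{Lemma62}, \ref{Lemma72}, and \ref{Lemma82} already contains term-by-term $|\lambda|$-weighted estimates for every summand that appears in the norm
\begin{equation*}
\|U\|_{\mathbb{H}_2}^2 = \rho_1\|u\|^2+\rho_2\|v\|^2+\rho_3\|s\|^2+\rho_4\|w\|^2+b_1\|A^{1/2}\psi\|^2+b_2\|A^{1/2}z\|^2+\kappa_1\|\varphi_x-\psi\|^2+\kappa_2\|y_x-z\|^2+\jmath\|y-\varphi\|^2+\rho_5\|\theta\|^2.
\end{equation*}
So the plan is simply to list the bounds: $|\lambda|\|u\|^2$, $|\lambda|\|\varphi_x-\psi\|^2$ from Lemma~\ref{Lemma52}(ii),(iv); $|\lambda|\|s\|^2$, $|\lambda|\|y_x-z\|^2$ from Lemma~\ref{Lemma52}(iii),(v); $|\lambda|\|v\|^2$ from Lemma~\ref{Lemma72}; $|\lambda|\|A^{1/2}\psi\|^2$, $|\lambda|\|w\|^2$, $|\lambda|\|A^{1/2}z\|^2$ from Lemma~\ref{Lemma82}; $|\lambda|\|\theta\|^2$ from Lemma~\ref{Lemma52}(i); and $|\lambda|\|y-\varphi\|^2$ from Lemma~\ref{Lemma22}(i). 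Summing these ten inequalities, each bounded by a constant multiple of $\|F\|_{\mathbb{H}_2}\|U\|_{\mathbb{H}_2}$, yields exactly the required bound.

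The restriction $(\beta_1,\beta_2,\beta_3)\in[\tfrac{1}{2},1]^3$ is precisely the intersection of the hypothesis ranges appearing in the earlier lemmas: items (ii),(iv) of Lemma~\ref{Lemma52} force $\beta_1\geq\tfrac{1}{2}$ (needed so that $1-\beta_1\leq\tfrac{1}{2}$ and $\tfrac{1}{2}-\beta_1\leq 0$ make the embeddings $\mathfrak{D}(A^{1/2})\hookrightarrow\mathfrak{D}(A^{1-\beta_1})$ and $\mathfrak{D}(A^0)\hookrightarrow\mathfrak{D}(A^{1/2-\beta_1})$ available), items (iii),(v) of Lemma~\ref{Lemma52} force $\beta_2\geq\tfrac{1}{2}$ for the same reason, and item (ii) of Lemma~\ref{Lemma82} forces $\beta_3\geq\tfrac{1}{2}$; item (iii) of Lemma~\ref{Lemma82} piggybacks on item (ii).

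I do not expect a genuine obstacle here: the entire difficulty has been spent in the preceding lemmas, where the multipliers $\lambda A^{-\beta_i}u$, $\lambda A^{-\beta_i}s$, $\lambda A^{-\beta_3}w$ and $\lambda A^{-1}v$ were chosen carefully to exploit the $A^{\beta_i}$-structure of the fractional dampings and the strong Laplacian coupling $\delta A\psi_t$ in the heat equation. The present theorem is the final bookkeeping step: combine the component bounds, absorb the small $\varepsilon$-terms on the left-hand side using \eqref{EqvExponencial2}, conclude \eqref{EquivAnaliticity}, and apply Theorem~\ref{LiuZAnaliticity} to obtain analyticity of $S_2(t)$ on $[\tfrac{1}{2},1]^3$.
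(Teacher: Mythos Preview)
Your proposal is correct and matches the paper's own proof essentially line for line: the paper also invokes Lemma~\ref{EImaginary2} for the spectral condition and then assembles the required estimate $|\lambda|\|U\|_{\mathbb{H}_2}^2\leq C_\delta\|F\|_{\mathbb{H}_2}\|U\|_{\mathbb{H}_2}$ by citing item~(i) of Lemma~\ref{Lemma22} together with Lemmas~\ref{Lemma52}, \ref{Lemma72}, and \ref{Lemma82}, exactly as you do. Your added explanation of why the cube $[\tfrac12,1]^3$ appears is a helpful gloss but not a departure from the paper's argument.
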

 \begin{proof}
 
\begin{center}
\tdplotsetmaincoords{80}{-35}
\begin{tikzpicture}[tdplot_main_coords, scale=4.5,]
   \coordinate(A) at (0.5,0.5,0.5);
    \coordinate(B) at (1,0.5,0.5);
    \coordinate(C) at (1,0.5,1);
    \coordinate(D) at (0.5,0.5,1);
    \coordinate(E) at (0.5,1,0.5);
    \coordinate(F) at (1,1,0.5);
    \coordinate(G) at (1,1,1);
    \coordinate(H) at (0.5,1,1);
    \filldraw[black!10, fill=blue!20](E)--(F)--(G)--(H);
    \filldraw[black!10, fill=blue!20](A)--(B)--(F)--(E);
    \filldraw[black!10, fill=blue!20](C)--(G)--(H)--(D);
    \filldraw[black!0, fill=blue!20](A)--(B)--(C)--(D)--(A);
    \draw [dashed] (A)--(B);
    \draw (C)--(D);
    \draw [dashed] (D)--(0.5,0.5,0);
    \draw [dashed] (C)--(1,0.5,0);
    \draw [dashed] (E)--(F);
    \draw [dashed] (0.5,1,0)--(H);
    \draw (H)--(G);
    \draw [dashed] (1,1,0)--(1,0,0);
    \draw [dashed] (0.5,1,0)--(0.5,0,0);
    \draw [dashed] (0,0.5,0)--(1,0.5,0);
    \draw [dashed] (1,1,0)--(G);
    \draw [dashed] (A)--(E);
    \draw [dashed] (B)--(F);
    \draw (C)--(G);
    \draw (D)--(H);
    \draw (D)--(0,0.5,1)--(0,1,1)--(H);
    \draw (0,0.5,0)--(0,0.5,1);
    \draw (0,1,0)--(0,1,1);
    \draw (0,0.5,0.5)--(0,1,0.5);
    \draw [dashed] (0,1,0)--(1,1,0);
    \draw [dashed] (0,1,0.5)--(E);
    \draw [dashed] (A)--(0,0.5,0.5);
    \draw (0,0.5,0.5)--(0,0,0.5);
    \draw [dashed] (0.5,0.5,0.5)--(0.5,0,0.5);
    \draw [dashed] (1,0.5,0.5)--(1,0,0.5);
    \draw (1,0.5,1)--(1,0,1);
    \draw (0.5,0.5,1)--(0.5,0,1);
    \draw (0,0.5,1)--(0,0,1);
    \draw (0,0,0.5)--(1,0,0.5);
    \draw (0,0,1)--(1,0,1);
    \draw (0.5,0,0)--(0.5,0,1);
    \draw (1,0,0)--(1,0,1);
    \draw[->, black!60] (0, 0,0) -- (1.2, 0,0);
    \draw[->, black!60] (0, 0,0) -- (0, 1.2,0);
    \draw[->, black!60] (0, 0,0) -- (0, 0,1.4);
    \draw node at (1.25, 0,0) {\Large $\beta_1$};
    \draw node at (0, 1.25,0) {\Large $\beta_2$};
    \draw node at (0, 0,1.45) {\Large $\beta_3$};
    \draw node at (0, 0,-0.05) {\large $0$};
    \draw node at (1, 0,-0.05) {\large $1$};
    \draw node at (0, 1,-0.05) {\large $1$};
    \draw node at (0.5, 0,-0.06) { $\frac{1}{2}$};
    \draw node at (0, 0.5,-0.06) { $\frac{1}{2}$};
    \draw node at (-0.07, -0.05,0.5) { $\frac{1}{2}$};
    \draw node at (-0.1, -0.09,1) {\large $1$};
    \draw[fill=black](0.5,0.5,0.5) circle (0.3pt);
    \draw[fill=black](1,1,1) circle (0.3pt);
    \draw[fill=black](0.5,1,1) circle (0.3pt);
    \draw[fill=black](0.5,0.5,1) circle (0.3pt);
    \draw[fill=black](1,0.5,1) circle (0.3pt);
    \draw[fill=black](1,0.5,0.5) circle (0.3pt);
    \draw[fill=black](0.5,1,0.5) circle (0.3pt);
    \draw[fill=black](1,1,0.5) circle (0.3pt);
    
\end{tikzpicture}
\end{center}
\begin{center}
{\bf FIG. 01:} Region $R_{A2}$ of Analyticity de $S_2(t)=e^{t\mathcal{B}_2}$
\end{center}

 From Lemma \ref{EImaginary2}, \eqref{iR2} is verified.  Let $\delta>0$, there exists a constant $C_\delta>0$ such that the solutions of the system \eqref{Eq2.10}-\eqref{Eq2.17} for $|\lambda|>\delta$,  satisfy the inequality 
 \begin{equation}\label{EquiAnalyticity2} 
 |\lambda|\|U\|^2_{\mathbb{H}_2}\leq C_\delta\|F\|_{\mathbb{H}_2}\|U\|_{\mathbb{H}_2}.
 \end{equation}
Finally,   considering $(\beta_1,\beta_2,\beta_3)\in [\frac{1}{2}, 1]^3$  and  using \eqref{Item01Lemma22}  (item $(i)$ the Lemmas \ref{Lemma22}), and  Lemmas:  \ref{Lemma52}, \ref{Lemma72} and \ref{Lemma82}, we finish the proof of this theorem.

\end{proof}

\subsubsection{Determination of Gevrey Classes: System 02}
\label{3.3.2}
Before exposing our results, it is useful to recall the next definition and result  presented in \cite{SCRT1990} (adapted from
\cite{TaylorM}, Theorem 4, p. 153]).

\begin{definition}\label{Def1.1Tebou} Let $t_0\geq 0$ be a real number. A strongly continuous semigroup $S(t)$, defined on a Banach space $ \mathbb{H}$, is of Gevrey class $s > 1$ for $t > t_0$, if $S(t)$ is infinitely differentiable for $t > t_0$, and for every compact set $K \subset (t_0,\infty)$ and each $\mu > 0$, there exists a constant $ C = C(\mu, K) > 0$ such that
    \begin{equation}\label{DesigDef1.1}
    ||S^{(n)}(t)||_{\mathcal{L}( \mathbb{H})} \leq  C\mu ^n(n!)^s,  \text{ for all } \quad t \in K, n = 0,1,2...
    \end{equation}
\end{definition}
\begin{theorem}[\cite{TaylorM}]\label{Theorem1.2Tebon}
    Let $S(t)$  be a strongly continuous and bounded semigroup on a Hilbert space $ \mathbb{H}$. Suppose that the infinitesimal generator $\mathbb{B}$ of the semigroup $S(t)$ satisfies the following estimate, for some $0 < \Psi < 1$:
    \begin{equation}\label{Eq1.5Tebon2020}
    \lim\limits_{|\lambda|\to\infty} \sup |\lambda |^\Psi ||(i\lambda I-\mathbb{B})^{-1}||_{\mathcal{L}( \mathbb{H})} < \infty.
    \end{equation}
    Then $S(t)$  is of Gevrey  class  $s$   for $t>0$,  for every   $s >\dfrac{1}{\Psi}$.
\end{theorem}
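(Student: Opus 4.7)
The statement quoted here is a classical characterization due to Taylor, so the ``proof'' in the paper will almost certainly just be a citation. Still, the natural route is a Dunford--Cauchy contour argument that trades the resolvent bound on the imaginary axis for higher differentiability of the semigroup. The plan is to perturb the imaginary axis into the resolvent set using a Neumann series, and then represent $S^{(n)}(t)$ by a contour integral whose shape is chosen, depending on $n$, to expose the Gevrey factor $(n!)^{1/\Psi}$.

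First, I would upgrade the assumption $\limsup |\lambda|^\Psi \|(i\lambda I-\mathbb{B})^{-1}\|<\infty$ from the imaginary axis to a parabolic region. Fix $M>0$ with $\|(i\lambda I-\mathbb{B})^{-1}\|\le M|\lambda|^{-\Psi}$ for $|\lambda|\ge R_0$. For a nearby point $\lambda=i\eta+\xi$ with $|\xi|\le \tfrac{1}{2M}|\eta|^{\Psi}$, the Neumann series
\[
(\lambda I-\mathbb{B})^{-1}=(i\eta I-\mathbb{B})^{-1}\sum_{k\ge 0}\bigl(-\xi(i\eta I-\mathbb{B})^{-1}\bigr)^{k}
\]
converges and yields the same order of estimate. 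Hence $\rho(\mathbb{B})$ contains a region
$\Omega_c:=\{\lambda\in\mathbb{C}:\operatorname{Re}\lambda\ge -c\,|\operatorname{Im}\lambda|^{\Psi}\,,\ |\operatorname{Im}\lambda|\ge R_0\}$
together with a neighborhood of the left half-plane (using boundedness of $S(t)$), and on $\Omega_c$ one keeps the bound $\|(\lambda I-\mathbb{B})^{-1}\|\le C|\lambda|^{-\Psi}$.

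Second, I would use the standard representation of an analytic (or differentiable) semigroup by a Bromwich-type contour. Let $\Gamma_n$ be the boundary of $\Omega_c$ truncated at $|\operatorname{Im}\lambda|=r_n$ and closed by a vertical segment at $\operatorname{Re}\lambda=-c\,r_n^{\Psi}$, oriented upwards. For $t>0$ and $n\ge 0$,
\[
S^{(n)}(t)=\frac{1}{2\pi i}\int_{\Gamma_n}\lambda^{n}e^{\lambda t}(\lambda I-\mathbb{B})^{-1}\,d\lambda .
\]
Using the resolvent bound, the integrand is controlled by $C\,|\lambda|^{n-\Psi}e^{\operatorname{Re}\lambda\cdot t}$. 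On the curved portion of $\Gamma_n$ one has $\operatorname{Re}\lambda\le -c\,|\operatorname{Im}\lambda|^{\Psi}\le -c'|\lambda|^{\Psi}$, so one is integrating $|\lambda|^{n-\Psi}\exp(-c'\,t\,|\lambda|^{\Psi})$ along a one-dimensional curve.

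Third comes the key optimization, which is really the heart of the proof. Choose $r_n$ to balance the two competing factors; a direct calculus calculation shows the optimum is $r_n\asymp (n/(c'\Psi t))^{1/\Psi}$, which after a change of variable and Stirling's formula delivers
\[
\|S^{(n)}(t)\|_{\mathcal{L}(\mathbb{H})}\le C(t)\,\bigl(c'\Psi t\bigr)^{-n/\Psi}\,\Gamma\!\Bigl(\tfrac{n+1}{\Psi}\Bigr)
\le C(t)\,\mu^{n}\,(n!)^{1/\Psi},
\]
where $\mu=\mu(t)$ can be made as small as one wishes on compact subsets of $(0,\infty)$. Since any $s>1/\Psi$ dominates $1/\Psi$, Definition \ref{Def1.1Tebou} is satisfied on every compact $K\subset(0,\infty)$, giving the Gevrey class $s$.

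The technical pinch point is exactly the third step: verifying that the contribution of the short vertical ``caps'' at $|\operatorname{Im}\lambda|=r_n$ does not spoil the bound, and that the $\Gamma$-function estimate really gives $(n!)^{1/\Psi}$ with a controllable constant in $t$. Everything else is soft---the Neumann-series perturbation and the contour representation are standard, but the $n$-dependent choice of $r_n$ and the Stirling step are where the exponent $1/\Psi$ in the Gevrey class is earned.
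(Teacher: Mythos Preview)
You anticipated correctly: the paper does not prove this statement at all. Theorem~\ref{Theorem1.2Tebon} is quoted with the citation \cite{TaylorM} and no proof is given; it is used as a black box to deduce the Gevrey regularity of $S_2(t)$ once the resolvent estimate \eqref{Gevrey01} has been established.

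Your sketch is the standard route to this kind of result and is essentially sound as an outline: extend the resolvent bound off the imaginary axis by a Neumann series, represent $S^{(n)}(t)$ by a Dunford--Cauchy integral along a contour bent into the region $\operatorname{Re}\lambda\le -c|\operatorname{Im}\lambda|^{\Psi}$, and optimize the truncation level $r_n\asymp (n/t)^{1/\Psi}$ to convert the integral bound into $(n!)^{1/\Psi}$ via Stirling. The one point worth tightening if you ever write this out in full is the justification of the contour representation itself: for a merely bounded $C_0$-semigroup one does not have the inverse Laplace formula a priori, so one usually first proves differentiability (e.g.\ by showing the resolvent estimate already forces eventual differentiability, or by a density/approximation argument) before invoking the formula for $S^{(n)}(t)$. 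The caps and the Stirling step you flagged are routine once that is in place.
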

Our main result in this subsection is as follows:
\begin{theorem} \label{TGevreyC}
Let  $S_2(t)=e^{t\mathbb{B}_2}$  strongly continuos-semigroups of contractions on the Hilbert space $ \mathbb{H}_2$, the semigroups $S_2(t)$ is of Gevrey class $s$,  for every $s> \frac{1+\phi}{2\phi}$,  such that,  we have the resolvent estimative:
  \begin{equation}\label{Gevrey01}
 \limsup_{|\lambda|\to\infty} |\lambda |^\frac{2\phi}{1+\phi} ||(i\lambda I-\mathbb{B}_2)^{-1}||_{\mathcal{L}( \mathbb{H}_2)} <\infty, 
    \end{equation}
where,  
\begin{equation}\label{phi}
\phi:=\min\limits_{(\beta_1,\beta_2,\beta_3)\in (0,1)^3}\{\beta_1,\beta_2, \beta_3\}.
\end{equation}
\end{theorem}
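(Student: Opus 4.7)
The strategy is to invoke Theorem \ref{Theorem1.2Tebon} with $\Psi = \frac{2\phi}{1+\phi} \in (0,1)$. Since Lemma \ref{EImaginary2} already places the imaginary axis inside $\rho(\mathbb{B}_2)$, it suffices to establish
\[
|\lambda|^{\frac{2\phi}{1+\phi}} \|U\|_{\mathbb{H}_2}^2 \leq C \|F\|_{\mathbb{H}_2}\|U\|_{\mathbb{H}_2}
\]
uniformly for $|\lambda| > \delta > 0$, where $(i\lambda I - \mathbb{B}_2)U = F$. This is equivalent to the resolvent bound \eqref{Gevrey01}.

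The first observation is that a subset of the estimates produced in the analyticity subsection is already independent of the damping exponents: the bounds $|\lambda|\|\theta\|^2 \leq C\|F\|_{\mathbb{H}_2}\|U\|_{\mathbb{H}_2}$ (Lemma \ref{Lemma52}(i)), $|\lambda|\|v\|^2 \leq C\|F\|_{\mathbb{H}_2}\|U\|_{\mathbb{H}_2}$ (Lemma \ref{Lemma72}), and $|\lambda|\|A^{1/2}\psi\|^2 \leq C\|F\|_{\mathbb{H}_2}\|U\|_{\mathbb{H}_2}$ (Lemma \ref{Lemma82}(i)) hold for every $(\beta_1,\beta_2,\beta_3)\in(0,1)^3$. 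Because $|\lambda|^{2\phi/(1+\phi)} \leq |\lambda|$ for $|\lambda|\geq 1$, these supply the Gevrey-type control on the $\theta$-, $v$-, and $A^{1/2}\psi$-components of $\|U\|_{\mathbb{H}_2}^2$ with no further work.

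The substance of the proof lies in upgrading the analyticity bounds on $\|u\|^2$, $\|s\|^2$, $\|w\|^2$ (which required $\beta_i \geq 1/2$) to the full range $\beta_i \in (0,1)$, albeit at the slower rate $|\lambda|^{2\beta_i/(1+\beta_i)}$ instead of $|\lambda|$. For $\|u\|^2$ I would revisit the identity obtained by pairing \eqref{Pesp2-20} with $\lambda A^{-\beta_1}\bar u$, as in Lemma \ref{Lemma52}(ii). The obstruction for $\beta_1 < 1/2$ is the term $\|A^{(1-\beta_1)/2}u\|^2$, which is not dominated by the dissipation output $\|A^{\beta_1/2}u\|^2 \leq C\|F\|\|U\|$. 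The plan is to apply Lions' Interpolation Theorem \ref{Lions-Landau-Kolmogorov} with $\alpha=-\beta_1/2$, $\beta=0$, $\gamma=\beta_1/2$ to get the geometric mean $\|u\|^2 \leq L^2\|A^{-\beta_1/2}u\|\,\|A^{\beta_1/2}u\|$, and to extract a decay estimate on $\|A^{-\beta_1/2}u\|$ from the imaginary part of the same identity (which naturally brings a factor of $|\lambda|^2$ onto $\|A^{-\beta_1/2}u\|^2$). Combining these two ingredients and absorbing the self-referential contributions by Young's inequality yields the targeted bound $|\lambda|^{2\beta_1/(1+\beta_1)}\|u\|^2 \leq C\|F\|_{\mathbb{H}_2}\|U\|_{\mathbb{H}_2}$. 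Parallel arguments applied to \eqref{Pesp2-60} and \eqref{Pesp2-80} deliver the analogues for $s$ and $w$ with exponents $2\beta_2/(1+\beta_2)$ and $2\beta_3/(1+\beta_3)$. As the function $\beta \mapsto 2\beta/(1+\beta)$ is strictly increasing on $(0,1)$, the worst exponent is $2\phi/(1+\phi)$, matching the target.

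The remaining components $\|\varphi_x-\psi\|^2$, $\|y_x-z\|^2$ and $\|A^{1/2}z\|^2$ are handled by invoking the reduction identities \eqref{Eq04Lemma052}, \eqref{Eq06Lemma052} and \eqref{Ec-wZ} from the analyticity section, which bound each of them by the corresponding $|\lambda|\|u\|^2$, $|\lambda|\|s\|^2$, $|\lambda|\|w\|^2$ modulo $\|F\|_{\mathbb{H}_2}\|U\|_{\mathbb{H}_2}$ terms; substituting the Gevrey-weighted bounds just derived transfers the exponent $|\lambda|^{2\phi/(1+\phi)}$ to these quantities too. Summing over all the components of $\|U\|_{\mathbb{H}_2}^2$ gives the desired resolvent estimate and concludes the proof via Theorem \ref{Theorem1.2Tebon}. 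The main technical obstacle will be calibrating the interpolation so that the exponent on $|\lambda|$ comes out to exactly $2\beta_i/(1+\beta_i)$ rather than a weaker power: the key is that the choice $\alpha = -\beta_i/2$ matches the negative-order regularity of $u_i$ produced by the resolvent identity, which is precisely what permits the $\theta_1=\theta_2=1/2$ symmetric Lions interpolation to transform a $|\lambda|^2$ factor on $\|A^{-\beta_i/2}u_i\|^2$ into the sharp Gevrey decay $|\lambda|^{2\beta_i/(1+\beta_i)}$ on $\|u_i\|^2$.
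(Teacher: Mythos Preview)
Your overall architecture is right: the $\theta$-, $v$-, $A^{1/2}\psi$- and $y-\varphi$-components already carry a full factor $|\lambda|$ for all $(\beta_1,\beta_2,\beta_3)\in(0,1)^3$, and the reduction identities \eqref{Eq04Lemma052}, \eqref{Eq06Lemma052}, \eqref{Ec-wZ} transfer whatever rate you achieve on $\|u\|^2,\|s\|^2,\|w\|^2$ to the remaining components. The problem is the mechanism you propose for bounding $\|u\|^2$ (and analogously $\|s\|^2,\|w\|^2$).

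Taking the imaginary part of the identity obtained by pairing \eqref{Pesp2-20} with $\lambda A^{-\beta_1}\overline u$ does isolate $|\lambda|^2\|A^{-\beta_1/2}u\|^2$, but on the right it also produces the term $\tfrac{\kappa_1}{\rho_1}\|A^{(1-\beta_1)/2}u\|^2$ (arising from $\dual{A\varphi}{\lambda A^{-\beta_1}u}$ after substituting $\lambda\varphi=-i(u+f^1)$). For $\beta_1<\tfrac12$ one has $\tfrac{1-\beta_1}{2}>\tfrac{\beta_1}{2}$, so this term is \emph{not} dominated by the dissipation output $\|A^{\beta_1/2}u\|^2$ and cannot be absorbed; the negative-order estimate you need simply does not follow from that identity in the sub-$\tfrac12$ range. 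Moreover, even if the bound $|\lambda|^2\|A^{-\beta_1/2}u\|^2\le C\|F\|\|U\|$ were available, your symmetric interpolation $\alpha=-\tfrac{\beta_1}{2},\gamma=\tfrac{\beta_1}{2}$ would yield $|\lambda|\|u\|^2\le C\|F\|\|U\|$, i.e.\ the analyticity rate for every $\beta_1\in(0,1)$, which is too strong to be true.

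What the paper actually does is different and is the missing idea: it splits $u=u_1+u_2$ with $u_1$ solving the auxiliary resolvent equation $i\lambda u_1+Au_1=f^2$, so that $i\lambda u_2$ equals only spatial operators acting on $\varphi,\psi,y,u$ plus $Au_1$. Applying $A^{-1/2}$ to this algebraic identity gives directly $|\lambda|^2\|A^{-1/2}u_2\|^2\le C(\|F\|\|U\|+|\lambda|^{-1}\|F\|^2)$ with no uncontrolled higher-order $u$-term appearing. One then interpolates $\|u_2\|$ between $\|A^{-1/2}u_2\|$ and $\|A^{\beta_1/2}u_2\|$ with the \emph{asymmetric} exponents $\tfrac{\beta_1}{1+\beta_1}$ and $\tfrac{1}{1+\beta_1}$, and it is precisely this asymmetry that produces the Gevrey exponent $\tfrac{2\beta_1}{1+\beta_1}$. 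Replace your pairing-and-imaginary-part step by this decomposition (likewise for $s$ and $w$) and the rest of your outline goes through.
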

\begin{proof}
Notice that, for $\phi$ defined in \eqref{phi}, we have $0<(2\phi)/(\phi+1)<1$. Next we will estimate:   $|\lambda|^\frac{2\beta_1}{1+\beta_1}\|u\|^2, \quad|\lambda|^\frac{2\beta_2}{1+\beta_2}\|s\|^2$  and  $|\lambda|^\frac{2\beta_3}{1+\beta_3}\|w\|^2$.  \\
{\bf  Let's start by estimating the term $|\lambda|^\frac{2\beta_1}{1+\beta_1}\|u\|$:}  It is  assume that   $|\lambda|>1$,  some ideas could be borrowed  from \cite{LiuR95}.  Set $u=u_1+u_2$, where $u_1\in \mathfrak{D}(A)$ and $u_2\in \mathfrak{D}(A^0)$, with 
\begin{equation}\label{Eq110AnalyRR}
i\lambda u_1+A u_1=f^2, 
\hspace{2cm} i\lambda u_2=-\dfrac{\kappa_1}{\rho_1}A\varphi-\dfrac{\kappa_1}{\rho_1}\psi_x+\dfrac{\jmath}{\rho_1}(y-\varphi)-\dfrac{\gamma_1}{\rho_1}A^{\beta_1}u+Au_1.
\end{equation} 
Firstly,  applying in the product duality  the first equation in \eqref{Eq110AnalyRR} by $u_1$, then  by $Au_1$    and recalling that the operator $A$  is
self-adjoint, resulting in 
\begin{equation}\label{Eq112AnalyRR}
|\lambda|\|u_1\|+|\lambda|^\frac{1}{2}\|A^\frac{1}{2}u_1\|+\|Au_1\|\leq C\|F\|_{\mathbb{H}_2}.
\end{equation}
Applying the $A^{-\frac{1}{2}}$ operator on the second equation of \eqref{Eq110AnalyRR},   result in
\begin{equation*}
i\lambda A^{-\frac{1}{2}}u_2= -\dfrac{\kappa_1}{\rho_1}A^\frac{1}{2}\varphi-\dfrac{\kappa_1}{\rho_1}A^{-\frac{1}{2}}\psi_x+\dfrac{\jmath}{\rho_1}A^{-\frac{1}{2}}(y-\varphi)-A^{\beta_1-\frac{1}{2}}u+A^\frac{1}{2}u_1,
\end{equation*}
 then,  as $\|A^{-\frac{1}{2}}\psi_x\|^2=\dual{-A^{-\frac{1}{2}}\psi_{xx}}{A^{-\frac{1}{2}}\psi}=\dual{A^\frac{1}{2}\psi}{A^{-\frac{1}{2}}\psi}=\|\psi\|^2\leq C\|A^\frac{1}{2}\psi\|^2$, $-\frac{1}{2}<0$ and  $\beta_1-\frac{1}{2}\leq \frac{\beta_1}{2}$, taking into account the continuous embedding $\mathfrak{D}(A^{\theta_2}) \hookrightarrow \mathfrak{D}(A^{\theta_1}),\;\theta_2>\theta_1$ and using \eqref{Eq112AnalyRR} and as $-1\leq -\frac{2\beta_1}{\beta_1+1}$,   result in 
\begin{eqnarray*}
\nonumber
|\lambda|^2\|A^{-\frac{1}{2}} u_2\|^2&\leq& C\{\|A^\frac{1}{2}\varphi\|^2+ \|A^\frac{1}{2}\psi\|^2+\|y-\varphi\|^2+\|A^\frac{\beta_1}{2}u\|^2\}+\|A^\frac{1}{2}u_1\|^2\\
\nonumber
&\leq &  C\{\|F\|_{\mathbb{H}_2}\|U\|_{\mathbb{H}_2}+|\lambda|^{-1}\|F\|^2_{\mathbb{H}_2}\}    \leq C|\lambda|^{-\frac{2\beta_1}{\beta_1+1}}\{ |\lambda|^\frac{2\beta_1}{\beta_1+1}\|F\|_{\mathbb{H}_2}\|U\|_{\mathbb{H}_2}+\|F\|^2_{\mathbb{H}_2} \}.
\end{eqnarray*}
Then
\begin{equation}
\label{Eq113AnalyRR}
\|A^{-\frac{1}{2}} u_2\|^2\leq  C|\lambda|^{-\frac{2(2\beta_1+1)}{\beta_1+1}}\big\{|\lambda|^\frac{2\beta_1}{\beta_1+1}\|F\|_{\mathbb{H}_2}\|U\|_{\mathbb{H}_2}+\|F\|^2_{\mathbb{H}_2} \big\}.
\end{equation}
On the  other hand, from $A^\frac{\beta_1}{2}u_2=A^\frac{\beta_1}{2}u-A^\frac{\beta_1}{2}u_1$,  \eqref{dis2-10} and  as $\mathfrak{D}(A^\frac{1}{2}) \hookrightarrow \mathfrak{D}(A^\frac{\beta_1}{2})$,  the inequality of \eqref{Eq112AnalyRR}, result in
\begin{equation}\label{Eq114AnalyRR}
\|A^\frac{\beta_1}{2} u_2\|^2 \leq  C\{ \|A^\frac{\beta_1}{2} u\|^2+\|A^\frac{\beta_1}{2}u_1\|^2
\} \leq  C|\lambda|^{-\frac{2\beta_1}{\beta_1+1}}\{|\lambda|^\frac{2\beta_1}{\beta_1+1}\|F\|_{\mathbb{H}_2}\|U\|_{\mathbb{H}_2}+\|F\|^2_{\mathbb{H}_2} \}.
\end{equation}
By Lions' interpolations inequality (Theorem \ref{Lions-Landau-Kolmogorov}), $0\in \big[-\frac{1}{2},\frac{\beta_1}{2}\big]$,  result in
\begin{equation}\label{Eq115AnalyRR}
 \|u_2\|^2\leq C(\|A^{-\frac{1}{2}}u_2\|^2)^\frac{\beta_1}{1+\beta_1}(\|A^\frac{\beta_1}{2}u_2\|^2)^\frac{1}{1+\beta_1}.
\end{equation}
Then, using \eqref{Eq113AnalyRR} and \eqref{Eq114AnalyRR} in \eqref{Eq115AnalyRR}, for $ |\lambda|>1$,  result in 
\begin{equation}\label{Eq118AnalyRR}
 \|u_2\|^2\leq C|\lambda|^{-\frac{4\beta_1}{1+\beta_1}}\{ |\lambda|^\frac{2\beta_1}{1+\beta_1}\|F\|_{\mathbb{H}_2}\|U\|_{\mathbb{H}_2}+\|F\|^2_{\mathbb{H}_2}\}.
\end{equation}
Therefore,   as $\|u\|^2\leq  \|u_1\|^2+ \|u_2\|^2$,  from  \eqref{Eq112AnalyRR},  \eqref{Eq118AnalyRR} and  as for $0\leq\beta_1\leq 1$ we have $|\lambda|^{-2}\leq |\lambda|^{-\frac{4\beta_1}{1+\beta_1}}$, result in 
\begin{equation}\label{Eq119AnalyRR}
 |\lambda|\|u\|^2\leq C_\delta|\lambda|^\frac{1-3\beta_1}{1+\beta_1}\{|\lambda|^\frac{2\beta_1}{1+\beta_1}\|F\|_{\mathbb{H}_2}\|U\|_{\mathbb{H}_2}+\|F\|^2_{\mathbb{H}_2}\}\qquad\rm{for}\qquad 0\leq\beta_1\leq 1.
\end{equation}
{\bf On the other hand,   let's now estimate the missing term  $|\lambda|^\frac{2\beta_2}{1+\beta_2}\|s\|^2$: } It is assumed that $|\lambda|>1$.  Set $s=s_1+s_2$, where $s_1\in \mathfrak{D}(A)$ and $s_2\in \mathfrak{D}(A^0)$, with 
\begin{multline}\label{Eq110AnalyRRW}
i\lambda s_1+As_1=f^6 \qquad {\rm and}\qquad i\lambda s_2=-\dfrac{\kappa_2}{\rho_3}Ay -\dfrac{\kappa_2}{\rho_3}z_x-\dfrac{\jmath}{\rho_3}(y-\varphi)-\dfrac{\gamma_2}{\rho_3}A^{\beta_2}s+As_1.
\end{multline} 
Firstly,  applying in the product duality  the first equation in \eqref{Eq110AnalyRRW} by $s_1$, then  by $As_1$    and recalling that the operator $A$  is
self-adjoint, resulting in 
\begin{equation}\label{Eq112AnalyRRW}
|\lambda|\|s_1\|+|\lambda|^\frac{1}{2}\|A^\frac{1}{2}s_1\|+\|As_1\|\leq C\|F\|_{\mathbb{H}_2}. 
\end{equation}
Applying the operator $A^{-\frac{1}{2}}$  in second equation of \eqref{Eq110AnalyRRW}, we have  
\begin{equation*}
i\lambda A^{-\frac{1}{2}}s_2= -\dfrac{\kappa_2}{\rho_3}A^\frac{1}{2}y -\dfrac{\kappa_2}{\rho_3}A^{-\frac{1}{2}}z_x-\dfrac{\jmath}{\rho_3}A^{-\frac{1}{2}}(y-\varphi)-\dfrac{\gamma_2}{\rho_3}A^{\beta_2-\frac{1}{2}}s+A^\frac{1}{2}s_1,
\end{equation*}
 then,  as $\|A^{-\frac{1}{2}}z_x\|^2=\|z\|^2\leq C\|A^\frac{1}{2}z\|^2$,  $0<\frac{1}{2}$ and  $\beta_2-\frac{1}{2}\leq \frac{\beta_2}{2}$,  taking into account the continuous embedding $\mathfrak{D}(A^{\theta_2}) \hookrightarrow \mathfrak{D}(A^{\theta_1}),\;\theta_2>\theta_1$,  lead to
\begin{eqnarray*}
|\lambda|^2\|A^{-\frac{1}{2}} s_2\|^2& \leq & C\{\|A^\frac{1}{2}y\|^2+\|A^\frac{1}{2}z\|^2+\|y-\varphi\|^2+\|A^\frac{\beta_2}{2}s\|^2\}+\|A^\frac{1}{2}s_1\|^2\\
&\leq & C\{\|F\|_{\mathbb{H}_2}\|U\|_{\mathbb{H}_2}+|\lambda|^{-1}\|F\|^2_{\mathbb{H}_2}\}    \leq C|\lambda|^{-\frac{2\beta_2}{\beta_2+1}}\{ |\lambda|^\frac{2\beta_2}{\beta_2+1}\|F\|_{\mathbb{H}_2}\|U\|_{\mathbb{H}_2}+\|F\|^2_{\mathbb{H}_2} \}.
\end{eqnarray*}
Then
\begin{equation}
\label{Eq113AnalyRRW}
\|A^{-\frac{1}{2}} s_2\|^2\leq  C|\lambda|^{-\frac{2(2\beta_2+1)}{\beta_2+1}}\big\{|\lambda|^\frac{2\beta_2}{\beta_2+1}\|F\|_{\mathbb{H}_2}\|U\|_{\mathbb{H}_2}+\|F\|^2_{\mathbb{H}_2} \big\}.
\end{equation}
On the  other hand,  from $A^\frac{\beta_2}{2}s_2=A^\frac{\beta_2}{2}s-A^\frac{\beta_2}{2}s_1$,  \eqref{dis2-10} and  as $\mathfrak{D}(A^\frac{1}{2}) \hookrightarrow \mathfrak{D}(A^\frac{\beta_1}{2})$,  the inequality of \eqref{Eq112AnalyRRW}, result in
\begin{equation}\label{Eq114AnalyRRW}
\|A^\frac{\beta_1}{2} s_2\|^2 \leq  C\{ \|A^\frac{\beta_2}{2} s\|^2+\|A^\frac{\beta_2}{2}s_1\|^2
\} \leq  C|\lambda|^{-\frac{2\beta_2}{\beta_2+1}}\{|\lambda|^\frac{2\beta_2}{\beta_2+1}\|F\|_{\mathbb{H}_2}\|U\|_{\mathbb{H}_2}+\|F\|^2_{\mathbb{H}_2} \}.
\end{equation}
By Lions' interpolations inequality $0\in \big[-\frac{1}{2},\frac{\beta_2}{2}\big]$,  result in
\begin{equation}\label{Eq115AnalyRRW}
 \|s_2\|^2\leq C(\|A^{-\frac{1}{2}}s_2\|^2)^\frac{\beta_2}{1+\beta_2}(\|A^\frac{\beta_2}{2}s_2\|^2)^\frac{1}{1+\beta_2}.
\end{equation}
Then, using  \eqref{Eq113AnalyRRW} and \eqref{Eq114AnalyRRW} in \eqref{Eq115AnalyRRW}, for $ |\lambda|>1$,  result in 
\begin{equation}\label{Eq118AnalyRRW}
 \|s_2\|^2\leq C|\lambda|^{-\frac{4\beta_2}{1+\beta_2}}\{ |\lambda|^\frac{2\beta_2}{1+\beta_2}\|F\|_{\mathbb{H}_2}\|U\|_{\mathbb{H}_2}+\|F\|^2_{\mathbb{H}_2}\}.
\end{equation}
Therefore,   as $\|s\|^2\leq  \|s_1\|^2+ \|s_2\|^2$,  from  \eqref{Eq112AnalyRRW},  \eqref{Eq118AnalyRRW} and  as for $0\leq\beta_2\leq 1$ we have $|\lambda|^{-2}\leq |\lambda|^{-\frac{4\beta_2}{1+\beta_2}}$, result in 
\begin{equation}\label{Eq119AnalyRRW}
 |\lambda|\|s\|^2\leq C_\delta|\lambda|^\frac{1-3\beta_2}{1+\beta_2}\{|\lambda|^\frac{2\beta_2}{1+\beta_2}\|F\|_{\mathbb{H}_2}\|U\|_{\mathbb{H}_2}+\|F\|^2_{\mathbb{H}_2}\}\qquad\rm{for}\qquad 0\leq\beta_2\leq 1.
\end{equation}
{\bf  Finally,   let's now estimate the missing term  $|\lambda|^\frac{2\beta_3}{1+\beta_3}\|w\|^2$:}    It is assumed that $|\lambda|>1$.  Set $w=w_1+w_2$, where $w_1\in \mathfrak{D}(A)$ and $w_2\in \mathfrak{D}(A^0)$, with 
\begin{multline}\label{Eq110AnalyRRT}
i\lambda w_1+Aw_1=f^8 \qquad {\rm and}\qquad i\lambda w_2=-\dfrac{b_2}{\rho_4}Az +\dfrac{\kappa_2}{\rho_4}(y_x-z)-\dfrac{\gamma_3}{\rho_4}A^{\beta_3}w+Aw_1.
\end{multline} 
Firstly,  applying in the product duality  the first equation in \eqref{Eq110AnalyRRT} by $w_1$, then  by $Aw_1$    and recalling that the operator $A$  is
self-adjoint, resulting in 
\begin{equation}\label{Eq112AnalyRRT}
|\lambda|\|w_1\|+|\lambda|^\frac{1}{2}\|A^\frac{1}{2}w_1\|+\|Aw_1\|\leq C\|F\|_{\mathbb{H}_2}. 
\end{equation}
Applying the operator $A^{-\frac{1}{2}}$  in second equation of \eqref{Eq110AnalyRRT}, we get  
\begin{equation*}
i\lambda A^{-\frac{1}{2}}w_2= -\dfrac{b_2}{\rho_4}A^\frac{1}{2}z +\dfrac{\kappa_2}{\rho_4}A^{-\frac{1}{2}}(y_x-z)-\dfrac{\gamma_3}{\rho_4}A^{\beta_3-\frac{1}{2}}w+A^\frac{1}{2}w_1,
\end{equation*}
 then, from $0\leq\beta_3\leq 1$, we have:     $\mathfrak{D}(A^\frac{\beta_3}{2}) \hookrightarrow \mathfrak{D}(A^{\beta_3-\frac{1}{2}})$ and $\mathfrak{D}(A^0) \hookrightarrow \mathfrak{D}(A^{-\frac{1}{2}})$, from estimates \eqref{EqvExponencial2} and \eqref{Eq112AnalyRRT},  lead to
\begin{equation}
\label{Eq113AnalyRRT}
\|A^{-\frac{1}{2}} w_2\|^2\leq  C|\lambda|^{-\frac{2(2\beta_3+1)}{\beta_3+1}}\big\{|\lambda|^\frac{2\beta_3}{\beta_3+1}\|F\|_{\mathbb{H}_2}\|U\|_{\mathbb{H}_2}+\|F\|^2_{\mathbb{H}_2} \big\}.
\end{equation}
On the  other hand, from $A^\frac{\beta_3}{2}w_2=A^\frac{\beta_3}{2}w-A^\frac{\beta_3}{2}w_1$,  \eqref{dis2-10} and  as $\mathfrak{D}(A^\frac{1}{2}) \hookrightarrow \mathfrak{D}(A^\frac{\beta_1}{2})$,  the inequality of \eqref{Eq112AnalyRRT}, result in
\begin{equation}\label{Eq114AnalyRRT}
\|A^\frac{\beta_1}{2} w_2\|^2 \leq  C\{ \|A^\frac{\beta_3}{2} w\|^2+\|A^\frac{\beta_3}{2}w_1\|^2
\} \leq  C|\lambda|^{-\frac{2\beta_3}{\beta_3+1}}\{|\lambda|^\frac{2\beta_3}{\beta_3+1}\|F\|_{\mathbb{H}_2}\|U\|_{\mathbb{H}_2}+\|F\|^2_{\mathbb{H}_2} \}.
\end{equation}
By Lions' interpolations inequality $0\in \big[-\frac{1}{2},\frac{\beta_3}{2}\big]$,  result in
\begin{equation}\label{Eq115AnalyRRT}
 \|w_2\|^2\leq C(\|A^{-\frac{1}{2}}w_2\|^2)^\frac{\beta_3}{1+\beta_3}(\|A^\frac{\beta_3}{2}w_2\|^2)^\frac{1}{1+\beta_3}.
\end{equation}
Then, using  \eqref{Eq113AnalyRRT} and \eqref{Eq114AnalyRRW} in \eqref{Eq115AnalyRRT}, for $ |\lambda|>1$,  result in 
\begin{equation}\label{Eq118AnalyRRT}
 \|w_2\|^2\leq C|\lambda|^{-\frac{4\beta_3}{1+\beta_3}}\{ |\lambda|^\frac{2\beta_3}{1+\beta_3}\|F\|_{\mathbb{H}_2}\|U\|_{\mathbb{H}_2}+\|F\|^2_{\mathbb{H}_2}\}.
\end{equation}
Therefore,   as $\|w\|^2\leq C\{ \|w_1\|^2+ \|w_2\|^2\}$,  from  \eqref{Eq112AnalyRRT},  \eqref{Eq118AnalyRRT} and  as for $0\leq\beta_3\leq 1$ we have $|\lambda|^{-2}\leq |\lambda|^{-\frac{4\beta_3}{1+\beta_3}}$, result in 
\begin{equation}\label{Eq119AnalyRRT}
 |\lambda|\|w\|^2\leq C_\delta|\lambda|^\frac{1-3\beta_3}{1+\beta_3}\{|\lambda|^\frac{2\beta_3}{1+\beta_3}\|F\|_{\mathbb{H}_2}\|U\|_{\mathbb{H}_2}+\|F\|^2_{\mathbb{H}_2}\}\qquad\rm{for}\qquad 0\leq\beta_3\leq 1.
\end{equation}
On other hand, using \eqref{Eq119AnalyRR} in inequality \eqref{Eq04Lemma052}, we have
\begin{equation}\label{Eq01ParaVphixPsi}
|\lambda|\|\varphi_x-\psi\|^2\leq C_\delta|\lambda|^\frac{1-3\beta_1}{1+\beta_1}\{|\lambda|^\frac{2\beta_1}{1+\beta_1}\|F\|_{\mathbb{H}_2}\|U\|_{\mathbb{H}_2}+\|F\|^2_{\mathbb{H}_2}\}\quad\text{for}\quad 0\leq \beta_1\leq 1.
\end{equation}
Using \eqref{Eq119AnalyRRW} in inequality \eqref{Eq06Lemma052}, we have
\begin{equation}\label{Eq01ParaYxz}
|\lambda|\|y_x-z\|^2\leq C_\delta|\lambda|^\frac{1-3\beta_2}{1+\beta_2}\{|\lambda|^\frac{2\beta_2}{1+\beta_2}\|F\|_{\mathbb{H}_2}\|U\|_{\mathbb{H}_2}+\|F\|^2_{\mathbb{H}_2}\}\quad\text{for}\quad 0\leq \beta_2\leq 1.
\end{equation}

Now, using \eqref{Eq119AnalyRRT} in inequality \eqref{Ec-wZ}, we have
\begin{equation}\label{Eq01ParaA12z}
|\lambda|\|A^\frac{1}{2}z\|^2\leq C_\delta|\lambda|^\frac{1-3\beta_2}{1+\beta_2}\{|\lambda|^\frac{2\beta_2}{1+\beta_2}\|F\|_{\mathbb{H}_2}\|U\|_{\mathbb{H}_2}+\|F\|^2_{\mathbb{H}_2}\}\quad\text{for}\quad 0\leq \beta_2\leq 1.
\end{equation}

Furthermore,  taking  $
\phi:=\min\limits_{(\beta_1,\beta_2,\beta_3)\in (0,1)^3}\{\beta_1,\beta_2, \beta_3\}$ defined in \eqref{phi}, we have,  $0<\phi<1$ and from estimates; \eqref{Eq119AnalyRR},\eqref{Eq119AnalyRRW} and \eqref{Eq119AnalyRRT}, we obtain
\begin{equation}\label{Gevrey001}
|\lambda|^\frac{2\phi}{1+\phi} \{ \rho_1\|u\|^2+\rho_3\|w\|^2+\rho_4 \|s\|^2 \} \leq C_\delta\|F\|_{\mathbb{H}_2}\|U\|_{\mathbb{H}_2}\quad \text{for}\quad 0<(2\phi)/(\phi+1)<1.
\end{equation}
As, $0<\frac{2\phi}{1+\phi}<1$, from estimates: \eqref{Item01Lemma22}, \eqref{Item01Lemma52}, \eqref{EcLambdav} and \eqref{Item01Lemma82}, we get
\begin{equation}\label{Gevrey002}
|\lambda|^\frac{2\phi}{1+\phi}\{\jmath\|y-\varphi\|^2+\rho_5\|\theta\|^2+\rho_2\|v\|^2+b_1\|A^\frac{1}{2}\psi\|^2\}\leq C_\delta\|F\|_{\mathbb{H}_2}\|U\|_{\mathbb{H}_2}\quad \text{for}\quad 0<(2\phi)/(\phi+1)<1.
\end{equation}
Now, as $0<\frac{2\phi}{\phi+1}<1$, from estimates; \eqref{Eq01ParaVphixPsi},\eqref{Eq01ParaYxz} and \eqref{Eq01ParaA12z}, we obtain
\begin{equation}\label{Gevrey003}
|\lambda|^\frac{2\phi}{1+\phi}\{\kappa_1\|\varphi_x-\psi\|^2+\kappa_2\|y_x-z\|^2+b_2\|A^\frac{1}{2}z\|^2\}\leq C_\delta\|F\|_{\mathbb{H}_2}\|U\|_{\mathbb{H}_2}\quad \text{for}\quad 0<(2\phi)/(\phi+1)<1.
\end{equation}
Finally summing the estimates \eqref{Gevrey001},\eqref{Gevrey002} and  \eqref{Gevrey003},  we have
\begin{equation*}
|\lambda|^\frac{2\phi}{\phi+1}\|U\|_\mathcal{H}\leq C_\delta  \|F\|_\mathcal{H}\qquad \text{for}\qquad 0<(2\phi)/(\phi+1)<1. 
\end{equation*}
Therefore, 
the proof of this theorem is finished.
\end{proof}


\end{document}